\numberwithin{equation}{section}
\theoremstyle{definition}
\newtheorem{defi}{Definition}[section]
\newtheorem{ejem}{Example}
\theoremstyle{plain}
\newtheorem{prop}[defi]{Proposition}
\newtheorem{teo}[defi]{Theorem}
\newtheorem{cor}[defi]{Corollary}
\newtheorem{lem}[defi]{Lemma}
\theoremstyle{remark}
\newtheorem{remark}[defi]{Remark}
\newcommand{\lp}{{L}^{p}(\Omega)}
\newcommand{\lpp}{{L}^{\scriptscriptstyle p}(\Omega)}
\newcommand{\wsp}{	{W}^{s,p}(\Omega)}
\newcommand{\wspp}{{W}^{\scriptscriptstyle s,p}(\Omega)}
\newcommand{\wwsp}{\mathcal{W}^{s,p}(\Omega)}
\newcommand{\wwspr}{\mathcal{W}^{s,p}(\mathbb{R}^{n+m})}
\newcommand{\wwspp}{\mathcal{W}^{\scriptscriptstyle s,p}(\Omega)}
\DeclareMathOperator*{\supp}{supp}
\DeclareMathOperator{\diam}{diam}
\title[Eigenvalues for a nonlocal pseudo $p-$Laplacian]{Eigenvalues for a nonlocal pseudo $p-$Laplacian}
\author[L. M. Del Pezzo and J. D. Rossi]
{Leandro M. Del Pezzo and Julio D. Rossi}
\address{Leandro M. Del Pezzo and Julio D. Rossi \hfill\break\indent
CONICET and Departamento  de Matem{\'a}tica, FCEyN, Universidad de Buenos Aires,
\hfill\break\indent Pabellon I, Ciudad Universitaria (1428),
Buenos Aires, Argentina.}
\email{{\tt ldpezzo@dm.uba.ar}, {\tt jrossi@dm.uba.ar}}
\keywords{fractional $p-$Laplacian,  eigenvalues, Dirichlet boundary
conditions. \\
\indent 2010 {\it Mathematics Subject Classification: } 35P30,  35J92, 35R11. 
}
\thanks{
Leandro M. Del Pezzo was partially supported by
CONICET PIP 5478/1438  (Argentina) and Julio D. Rossi  
by MTM2011-27998, (Spain)}
\begin{document}

\begin{abstract} 
	In this paper we study the eigenvalue problems for a nonlocal operator 
	of order $s$ that is analogous to the local pseudo $p-$Laplacian. 
	We show that there is a sequence of eigenvalues $\lambda_n \to \infty$ 
	and that the first one is positive, simple, isolated  and has a positive and 
	bounded associated eigenfunction.  
	For the first eigenvalue we also analyze the limits as $p\to \infty$ 
	(obtaining a limit nonlocal eigenvalue problem analogous to the
	pseudo infinity Laplacian) and as $s\to 1^-$ 
	(obtaining the first eigenvalue for a local operator of $p-$Laplacian type). 
	To perform this study we have to introduce anisotropic fractional Sobolev spaces
	and prove some of their properties.
\end{abstract}

\maketitle

\section{Introduction}\label{intro}

	Our main goal is to introduce a nonlocal operator that is a nonlocal 
	analogous to the local pseudo $p-$Laplacian, 
	$\Delta_{p,x} u + \Delta_{p,y}u$ 
	(here the subindexes $x$ and $y$ denote differentiation with respect to the 
	$x \in {\mathbb{R}}^n $ and $y\in {\mathbb{R}}^m$ variables respectively). The local pseudo $p-$Laplacian appears 
	naturally when one considers critical points of the functional 
	$F(u) = \int_{\Omega} |\nabla_x u|^p + |\nabla_y u|^p \, dxdy$. 
	See \cite{BK,Dp,J,M,RS}.
	On the other hand, recently, it was introduced a nonlocal 
	$p-$Laplacian that is given by
	\[
		(-\Delta)_p^s v(x)= 2\text{ P.V.}
		\int_{\mathbb{R}^k} \dfrac{|v(x)-v(y)|^{p-2}(v(x)-v(y))}{|x-y|^{k+ps}}
		\, dx,
	\]
	the symbol P.V. stands for the principal value of the integral. We will omit it in what follows.
	For references involving this kind of operator we refer to 
	\cite{Brasco,DKP,DRV,FP,IMS,Jilka,LL,Bisci1,Bisci2, Bisci3} and 
	references therein.
	
	Here, we introduce the following nonlocal operator that we will call the 
	nonlocal pseudo $p-$Laplacian,
	\begin{align*}
		\mathcal{L}_{s,p}(u)(x,y)&\coloneqq
		2 \int_{\mathbb{R}^n}
		\dfrac{|u(x,y)-u(z,y)|^{p-2}(u(x,y)-u(z,y)
		)}{|x-z|^{n+sp}}dz\\
		&\qquad +2 \int_{\mathbb{R}^m}
		\dfrac{|u(x,y)-u(x,w)|^{p-2}(u(x,y)-u(x,w)
		)}{|y-w|^{m+sp}}dw.
	\end{align*}

	The natural space to consider when one deals with the operator 
	$\mathcal{L}_{s,p}$ is given by
	\[
		\wwspr\coloneqq
		\left\{u\in L^p(\mathbb{R}^{n+m})\colon
		[u]_{\wwspr}^p<\infty
		\right\},
	\]
	where for $p<+\infty,$
	\begin{align*}
		[u]_{\wwspr}^p
				\coloneqq\int_{\mathbb{R}^{n+m}}\int_{\mathbb{R}^{n}}&
		\dfrac{|u(x,y)-u(z,y)|^p}{|x-z|^{n+sp}} dzdxdy\\
		&+
		\int_{\mathbb{R}^{n+m}}\int_{\mathbb{R}^m}
		\dfrac{|u(x,y)-u(x,w)|^p}{|y-w|^{m+sp}} dwdxdy 	
	\end{align*}
	 and for $p=+\infty$,
	\begin{align*}
		[u]_{\mathcal{W}^{s,\infty}(\mathbb{R}^{n+m})}
		&\coloneqq\max\left\{\sup\left\{
		\dfrac{|u(x,y)-u(z,y)|}{|x-z|^{s}}\colon (x,y)\neq(z,y) 
		\right\}\right.;\\
		&\qquad\qquad\left.\sup\left\{
		\dfrac{|u(x,y)-u(x,w)|}{|y-w|^{s}}\colon (x,y)\neq(x,w)\right\}
		\right\}.
	\end{align*}

	In this paper, we deal with the eigenvalue problem for this operator, 
	that is, given a bounded domain $\Omega$ we look for pairs $(\lambda, u)$
	such that $\lambda \in {\mathbb{R}}$ and
	$u\in \widetilde{\mathcal{W}}^{s,p}(\Omega)\setminus\{0\}$ are such
	that $u$ is a weak solution of
	\begin{equation}\label{eq:EDP.intro}
		\begin{cases}
				 \mathcal{L}_{s,p}u(x,y)=\lambda |u(x,y)|^{p-2}u(x,y)
				&\text{ in }\Omega,\\	
				u(x,y)=0 &\text{ in }\Omega^c = \mathbb{R}^{n+m}\setminus 
				\Omega.
		\end{cases}
	\end{equation}
	Here $\widetilde{\mathcal{W}}^{s,p}(\Omega)=\{u\in\wwspr\colon
	u\equiv0 \text{ in }\Omega^c\}.$
	We will study the Dirichlet problem for this operator in a companion paper.

	We impose the following assumptions on the data:
	\begin{itemize}
		\item[A1.] $\Omega$ is a
		bounded Lipschitz domain in $\mathbb{R}^{n+m};$
		\item[A2.]  $s\in(0,1),$ and $p\in(1,+\infty).$
	\end{itemize}
	
	Under these conditions we have the following result.
	
	\begin{teo} \label{teo.autov.intro} There exists a
	sequence of eigenvalues $\lambda_n$ such that
	$\lambda_n\to+\infty$ as $n\to+\infty$.
	 Moreover, every eigenfunction
		is in $L^{\infty}(\mathbb{R}^{n+m}).$
		The first eigenvalue (the smallest eigenvalue) is given by
       \begin{equation}\label{eq:autovalor.intro}
			\lambda_1(s,p)\coloneqq\inf
                 \left\{
                     \dfrac{[u]_{\mathcal{W}^{s,p}(
						\mathbb{R}^{n+m})}^p}{\|u\|_{L^p(\Omega)}^p}	
						\colon
                      	u\in\widetilde{\mathcal{W}}^{s,p}(\Omega), 
                      	u\not\equiv0
                 \right\}.
		\end{equation} 
		This eigenvalue $\lambda_1(s,p)$ is simple, isolated and
		 an associated eigenfunction is strictly positive 
		 (or negative) in $\Omega$.
	\end{teo}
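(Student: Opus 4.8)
Throughout I use the weak formulation of \eqref{eq:EDP.intro}, in which $\langle\mathcal{L}_{s,p}u,\varphi\rangle$ denotes the symmetrized anisotropic Gagliardo form associated with $\mathcal{L}_{s,p}$, so that $\langle\mathcal{L}_{s,p}u,u\rangle=[u]^p_{\wwspr}$ and $u\in\widetilde{\mathcal{W}}^{s,p}(\Omega)\setminus\{0\}$ is an eigenfunction with eigenvalue $\lambda$ precisely when $\langle\mathcal{L}_{s,p}u,\varphi\rangle=\lambda\int_\Omega|u|^{p-2}u\,\varphi$ for all $\varphi\in\widetilde{\mathcal{W}}^{s,p}(\Omega)$. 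To produce $\lambda_1(s,p)$ I would use the direct method of the calculus of variations: a minimizing sequence $(u_k)$ for the quotient in \eqref{eq:autovalor.intro} normalized by $\|u_k\|_{\lp}=1$ is bounded in $\widetilde{\mathcal{W}}^{s,p}(\Omega)$, so by reflexivity of that space and compactness of the embedding $\widetilde{\mathcal{W}}^{s,p}(\Omega)\hookrightarrow\lp$ --- properties of the anisotropic fractional Sobolev spaces that I take as established beforehand --- a subsequence satisfies $u_k\rightharpoonup u_1$ weakly and $u_k\to u_1$ in $\lp$; then $\|u_1\|_{\lp}=1$ and weak lower semicontinuity of the convex functional $v\mapsto[v]^p_{\wwspr}$ gives $[u_1]^p_{\wwspr}\le\lambda_1(s,p)$, so $u_1$ is a minimizer. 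Differentiating $t\mapsto[u_1+t\varphi]^p_{\wwspr}/\|u_1+t\varphi\|_{\lp}^p$ at $t=0$ shows $u_1$ solves \eqref{eq:EDP.intro} with $\lambda=\lambda_1(s,p)$, so $\lambda_1(s,p)$ is an eigenvalue; testing \eqref{eq:EDP.intro} for an arbitrary eigenpair $(\mu,w)$ with $\varphi=w$ gives $\mu=[w]^p_{\wwspr}/\|w\|_{\lp}^p\ge\lambda_1(s,p)$, so it is the smallest one and \eqref{eq:autovalor.intro} holds.

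For the diverging sequence I would use Lusternik--Schnirelmann theory. On the $C^1$ manifold $\mathcal{M}=\{u\in\widetilde{\mathcal{W}}^{s,p}(\Omega):\|u\|_{\lp}=1\}$ the even functional $\Phi(u)=[u]^p_{\wwspr}$ is bounded below and, again because the embedding into $\lp$ is compact, satisfies the Palais--Smale condition. Setting $\lambda_n=\inf\{\sup_A\Phi:\ A\subset\mathcal{M}\ \text{compact and symmetric},\ \gamma(A)\ge n\}$, where $\gamma$ is the Krasnoselskii genus, the standard deformation lemmas make each $\lambda_n$ a critical value of $\Phi|_{\mathcal{M}}$, hence an eigenvalue; since $\mathcal{M}$ carries symmetric compact sets of arbitrarily large genus while $\Phi$ satisfies (PS), the $\lambda_n$ tend to $+\infty$. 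The choice $A=\{u,-u\}$ recovers the value in \eqref{eq:autovalor.intro} as $\lambda_1$.

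Next, positivity and simplicity. Since $\bigl|\,|a|-|b|\,\bigr|\le|a-b|$ we have $[\,|v|\,]^p_{\wwspr}\le[v]^p_{\wwspr}$ with the $L^p$ norm unchanged, so $|u_1|$ is again a minimizer and we may assume $u_1\ge0$; a strong maximum principle for $\mathcal{L}_{s,p}$ then gives $u_1>0$ in $\Omega$: if $u_1\ge0$ vanished at an interior point $(x_0,y_0)$, the equation there forces $\int_{\mathbb{R}^n}|x_0-z|^{-(n+sp)}u_1(z,y_0)^{p-1}\,dz=\int_{\mathbb{R}^m}|y_0-w|^{-(m+sp)}u_1(x_0,w)^{p-1}\,dw=0$, hence $u_1\equiv0$ on the slices $\mathbb{R}^n\times\{y_0\}$ and $\{x_0\}\times\mathbb{R}^m$, and iterating over neighbouring slices while using that $\Omega$ is connected forces $u_1\equiv0$ on $\Omega$, a contradiction. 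For simplicity I would combine two ingredients. The first is the hidden convexity of the Gagliardo energies: for nonnegative $f,g$ of unit $L^p$ norm and $w_t=((1-t)f^p+tg^p)^{1/p}$, the elementary convexity inequality for the $\ell^p$-type map $(a,b)\mapsto((1-t)a^p+tb^p)^{1/p}$, applied to each of the two difference quotients defining $[\,\cdot\,]_{\wwspr}$, yields $[w_t]^p_{\wwspr}\le(1-t)[f]^p_{\wwspr}+t[g]^p_{\wwspr}$, with equality for some $t\in(0,1)$ forcing $f=g$; applied to two normalized nonnegative first eigenfunctions, both sides equal $\lambda_1(s,p)$, so the nonnegative first eigenfunction is unique. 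The second is that, testing \eqref{eq:EDP.intro} for any first eigenfunction $v$ with its positive part $v^+$ and using $|a-b|^{p-2}(a-b)(a^+-b^+)\ge|a^+-b^+|^p$, one obtains $[v^+]^p_{\wwspr}\le\lambda_1(s,p)\|v^+\|_{\lp}^p$, so $v^+$, if nonzero, is a minimizer, hence a first eigenfunction, hence $>0$ in $\Omega$; thus $v$ cannot change sign. Together these give $v=c\,u_1$, so $\lambda_1(s,p)$ is simple and its eigenfunctions are strictly positive or strictly negative in $\Omega$.

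Finally, isolation and the $L^\infty$ bound. Combining the previous step with a nonlocal Picone inequality shows any nonnegative eigenfunction has eigenvalue $\lambda_1(s,p)$, so every eigenfunction for $\mu>\lambda_1(s,p)$ changes sign. If $\lambda_1(s,p)$ were not isolated, take eigenvalues $\mu_j\downarrow\lambda_1(s,p)$ with $\mu_j>\lambda_1(s,p)$ and normalized eigenfunctions $v_j$; then $[v_j]^p_{\wwspr}=\mu_j$ is bounded, so along a subsequence $v_j\to v_1$ in $\lp$ with $v_1$ a first eigenfunction, which after a sign choice is $>0$ in $\Omega$, whence $|\{v_j<0\}|\to0$. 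On the other hand, testing \eqref{eq:EDP.intro} for $v_j$ with $v_j^-$ gives $[v_j^-]^p_{\wwspr}\le\mu_j\|v_j^-\|_{\lp}^p$, so $w_j:=v_j^-/\|v_j^-\|_{\lp}$ is bounded in $\widetilde{\mathcal{W}}^{s,p}(\Omega)$ and converges in $\lp$ to some $w$ with $\|w\|_{\lp}=1$; but $w_j$ is supported in $\{v_j<0\}$ and $|\{v_j<0\}|\to0$, so $w_j\rightharpoonup0$ and hence $w=0$, a contradiction. The bound $u\in L^\infty(\mathbb{R}^{n+m})$ for every eigenfunction follows from a Moser-type iteration applied to $\mathcal{L}_{s,p}u=\lambda|u|^{p-2}u$: testing with truncated powers of $u$ and using the Caccioppoli and Sobolev inequalities for the anisotropic spaces bootstraps the integrability of $u$ from $L^p$ up to $L^\infty$, the nonlocal tails being absorbed as for the fractional $p$-Laplacian. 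The steps I expect to be the main obstacles are the strong maximum principle --- where, unlike for the isotropic fractional $p$-Laplacian, the kernel of $\mathcal{L}_{s,p}$ ``sees'' a point only through the lower-dimensional crosses $\mathbb{R}^n\times\{y\}$ and $\{x\}\times\mathbb{R}^m$, so one must argue carefully that vanishing there spreads throughout the connected set $\Omega$ --- and the Moser iteration, which has to handle the two directional kernels simultaneously; the remaining steps are routine once the anisotropic Sobolev spaces have the stated functional properties.
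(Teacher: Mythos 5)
Your overall architecture matches the paper's: direct method for $\lambda_1$, Krasnoselskii genus/Lusternik--Schnirelmann for the diverging sequence, a De~Giorgi/Moser iteration for $L^\infty$, positivity by a nonlocal strong‐minimum argument, simplicity via a convexity/Picone device, and isolation via measure estimates on nodal domains. Two of your ingredients are genuine alternatives and worth noting; two others have real gaps.

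On simplicity, you use the hidden‐convexity (Brasco--Franzina) device: for nonnegative normalized $f,g$ and $w_t=((1-t)f^p+tg^p)^{1/p}$ one has $[w_t]^p\le(1-t)[f]^p+t[g]^p$ by applying the $\ell^p$ convexity to each of the two directional difference quotients, and equality forces $f=g$. The paper instead proves simplicity by a nonlocal Picone inequality (Lemma~6.2 of Amghibech), plugging the quotient $u^p/v_k^{p-1}$ into the weak formulation. Both are standard in this circle of ideas and the anisotropic kernel causes no new difficulty for either; the convexity route is arguably cleaner, while the Picone route in the paper simultaneously yields the stronger fact (used for isolation) that \emph{any} nonnegative eigenfunction must have eigenvalue $\lambda_1(s,p)$.

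The gap you flagged is indeed a gap. Your ``strong maximum principle'' argument evaluates the equation at a point $(x_0,y_0)$ where $u_1$ vanishes and concludes that the two cross‐sectional integrals vanish. But the equation holds only in the weak sense, and eigenfunctions are a~priori only in $\widetilde{\mathcal W}^{s,p}\cap L^\infty$; you cannot test at a point without first proving continuity, and continuity is not established anywhere in the theorem you are proving. The paper avoids this entirely by a measure‐theoretic route: a logarithmic Caccioppoli estimate in the style of Di~Castro--Kuusi--Palatucci (Lemma~\ref{lema:DKP}) for nonnegative weak supersolutions, applied on a chain of small boxes covering a compact connected subset of $\Omega$, forces the set $\{u=0\}$ to have measure zero. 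Your remark that the anisotropic kernel only ``sees'' through the crosses $\mathbb R^n\times\{y\}$ and $\{x\}\times\mathbb R^m$ is the right heuristic, but the actual estimate needs a test function of the product form $\phi(x)^p\psi(y)^p(u+d)^{1-p}$, and this is not a routine adaptation that can be left implicit.

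Separately, in the isolation step you assert that ``combining the previous step with a nonlocal Picone inequality shows any nonnegative eigenfunction has eigenvalue $\lambda_1(s,p)$,'' but you have not developed any Picone inequality (you chose hidden convexity precisely to avoid it), and the hidden‐convexity argument as you stated it only shows uniqueness of nonnegative \emph{first} eigenfunctions; it does not by itself rule out a nonnegative eigenfunction for some $\lambda>\lambda_1(s,p)$. This statement is exactly the content of the paper's Theorem~\ref{teo:autoval1}, proved via the quotient $u^p/v_k^{p-1}$ and the Picone‐type pointwise inequality; if you want to keep the convexity route for simplicity, you must either also develop the Picone argument for this step or replace it with the paper's direct measure bound $(1/(C\lambda))^{r/(r-p)}\le|\Omega_\pm|$, which follows from Sobolev plus Chebyshev and is what the paper actually uses in Step~1 of Theorem~\ref{teo:autposi.2}. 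Finally, a minor point: your phrase ``$w_j\rightharpoonup0$'' in the isolation argument is not the right mechanism; what you need is strong $L^p$ convergence of $w_j$ (from compactness) together with absolute continuity of the integral over the shrinking sets $\{v_j<0\}$, which gives $\|w\|_{L^p(\{v_j<0\})}\to0$ and hence contradicts $\|w\|_{L^p}=1$.
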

	
	Next, we analyze the limit as $s\to 1^-$ of the first eigenvalue 
	obtaining that there is 
	a limit that is the first eigenvalue of a local operator that involve two 
	$p-$Laplacians (one in the $x$ variables and another one in $y$ variables).
	
	\begin{teo} \label{teo.s.to.1}
	Let $\Omega$ is bounded domain in
	$\mathbb{R}^{n+m}$ with smooth boundary, and fix 
	$p\in(1,\infty).$ Then
		\begin{equation}\label{eq:sa1.intro}
		\begin{aligned}
			&\lim_{s\to 1^-}(1-s)\lambda_1(s,p)= \lambda_1(1,p)\\
			& \quad \coloneqq \inf\left\{
			\dfrac{K_{n,p}\|\nabla_x u\|^p_{\lp} 
			+K_{m,p}\|\nabla_y u\|^p_{\lp}}{\displaystyle
			\|u\|_{\lpp}^p}\colon u\in W^{1,p}_0(\Omega), 
			u\not\equiv0\right\},
		\end{aligned}
	\end{equation}
	where the constant $K_{n,p}>0$ depends only on $n$ and $p,$ while 
	$K_{m,p}>0$ depends only on $m$ and $p.$ 
	\end{teo}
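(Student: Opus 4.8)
The plan is to carry out the familiar $\Gamma$-convergence argument for first eigenvalues, with the classical Bourgain--Brezis--Mironescu (BBM) asymptotics replaced by their anisotropic analog adapted to the seminorm $[\cdot]_{\mathcal{W}^{s,p}(\mathbb{R}^{n+m})}$. Two ingredients about this seminorm are needed. The first is the pointwise limit: for every $u\in C_c^\infty(\Omega)$, extended by $0$,
\begin{equation}\label{eq:BBM.split}
\lim_{s\to1^-}(1-s)[u]_{\mathcal{W}^{s,p}(\mathbb{R}^{n+m})}^p
=K_{n,p}\|\nabla_x u\|_{\lp}^p+K_{m,p}\|\nabla_y u\|_{\lp}^p .
\end{equation}
This follows by freezing the $y$ variables and applying the classical BBM limit on $\mathbb{R}^n$ to the slice $x\mapsto u(x,y)$ (and symmetrically in $y$), then integrating in the remaining variables; the exchange of the limit in $s$ with the integration is justified by the uniform bound $(1-s)\int_{\mathbb{R}^n}\int_{\mathbb{R}^n}|v(x)-v(z)|^p|x-z|^{-n-sp}\,dz\,dx\le C_{n,p}\|v\|_{W^{1,p}(\mathbb{R}^n)}^p$, valid for $s$ near $1$, together with dominated convergence. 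The second ingredient is a compactness and lower-semicontinuity statement: if $s_j\to1^-$, $\supp u_j\subset\overline\Omega$, and $\sup_j(1-s_j)[u_j]_{\mathcal{W}^{s_j,p}(\mathbb{R}^{n+m})}^p<\infty$, then $\{u_j\}$ is precompact in $L^p(\Omega)$ and every $L^p$-limit $u$ lies in $W^{1,p}_0(\Omega)$ and satisfies $K_{n,p}\|\nabla_x u\|_{\lp}^p+K_{m,p}\|\nabla_y u\|_{\lp}^p\le\liminf_{j\to\infty}(1-s_j)[u_j]_{\mathcal{W}^{s_j,p}(\mathbb{R}^{n+m})}^p$; this is the anisotropic version of the BBM compactness theorem, obtained by the same slicing together with a localization argument, the sharp constants in the lower bound coming from \eqref{eq:BBM.split} and weak lower semicontinuity of $u\mapsto\|\nabla u\|_{\lp}$.

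Granting these, the theorem reduces to two inequalities. For $\limsup_{s\to1^-}(1-s)\lambda_1(s,p)\le\lambda_1(1,p)$: given $\varepsilon>0$, use the density of $C_c^\infty(\Omega)$ in $W^{1,p}_0(\Omega)$ to pick $u\in C_c^\infty(\Omega)$ whose local Rayleigh quotient is at most $\lambda_1(1,p)+\varepsilon$; since then $u\in\widetilde{\mathcal{W}}^{s,p}(\Omega)$, the variational characterization \eqref{eq:autovalor.intro} gives $(1-s)\lambda_1(s,p)\le(1-s)[u]_{\mathcal{W}^{s,p}(\mathbb{R}^{n+m})}^p/\|u\|_{\lpp}^p$, and letting $s\to1^-$ via \eqref{eq:BBM.split} yields $\limsup_{s\to1^-}(1-s)\lambda_1(s,p)\le\lambda_1(1,p)+\varepsilon$; now let $\varepsilon\to0$.

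For $\liminf_{s\to1^-}(1-s)\lambda_1(s,p)\ge\lambda_1(1,p)$: by Theorem~\ref{teo.autov.intro} choose, for each $s$, an eigenfunction $u_s$ associated with $\lambda_1(s,p)$ normalized so that $\|u_s\|_{\lpp}=1$, whence $(1-s)[u_s]_{\mathcal{W}^{s,p}(\mathbb{R}^{n+m})}^p=(1-s)\lambda_1(s,p)$, which is bounded as $s\to1^-$ by the step just proven. Given any sequence $s_j\to1^-$, the compactness statement furnishes a subsequence along which $u_{s_j}\to u$ in $L^p(\Omega)$ with $u\in W^{1,p}_0(\Omega)$ and $\|u\|_{\lpp}=1$, so $u\not\equiv0$ is admissible in \eqref{eq:sa1.intro}; hence $\lambda_1(1,p)\le(K_{n,p}\|\nabla_x u\|_{\lp}^p+K_{m,p}\|\nabla_y u\|_{\lp}^p)/\|u\|_{\lpp}^p\le\liminf_{j\to\infty}(1-s_j)\lambda_1(s_j,p)$. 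As the sequence $s_j\to1^-$ was arbitrary this gives the claimed $\liminf$ bound, and combining the two inequalities proves \eqref{eq:sa1.intro}.

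The main obstacle is establishing the anisotropic BBM package, and in particular the compactness statement with the sharp lower bound: one must verify that a seminorm controlling only the coordinate-direction difference quotients (separately on the $x$- and $y$-slices) still forces $L^p$-precompactness and produces a $W^{1,p}$ limit carrying exactly the constants $K_{n,p}$ and $K_{m,p}$. This is not done by a direct estimate but by a localization argument reducing to \eqref{eq:BBM.split} and weak lower semicontinuity; once it and \eqref{eq:BBM.split} are available, the two-sided estimate above is routine.
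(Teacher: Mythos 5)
Your overall strategy coincides with the paper's: an upper bound from the variational characterization plus a pointwise BBM-type limit on test functions, and a lower bound via compactness and a $\Gamma$-liminf inequality. The upper-bound half is essentially the paper's (Lemma \ref{lema:limites} and the first part of Theorem \ref{teo:limites}). The real issue is the lower bound, where you assert an ``anisotropic BBM compactness with sharp lower semicontinuity'' and sketch that it follows from \eqref{eq:BBM.split}, slicing, localization, and weak lower semicontinuity of $u\mapsto\|\nabla u\|_{\lp}$. That sketch does not hold up as stated, and it is precisely where the paper has a nontrivial lemma that your argument lacks.

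Concretely: \eqref{eq:BBM.split} is a limit statement for a \emph{fixed} $u$, whereas the $\Gamma$-liminf you need controls a \emph{varying} sequence $u_j$ along $s_j\to1^-$. Weak lower semicontinuity of the local Dirichlet integral cannot be applied directly because you do not yet have $\nabla u_j$ converging weakly; you only have uniformly bounded $s_j$-seminorms with $s_j$ escaping to $1$. The paper resolves this with Lemma \ref{lema:incluentres}, a monotonicity-in-$s$ estimate (proved via Lemma \ref{lema:Blema2}, i.e.\ Lemma 2 of \cite{Bourgain}) of the form
\[
\frac{(1-s_0)[u]_{\mathcal{W}^{s_0,p}(\Omega)}^p}{2^{(1-s_0)p}\diam(\Omega)^{(s-s_0)p}}
\le (1-s)[u]_{\mathcal{W}^{s,p}(\mathbb{R}^{n+m})}^p, \qquad 0<s_0<s<1,
\]
which lets one drop to a \emph{fixed} exponent $s_0<1$ where the sequence does converge (by the classical BBM compactness, one even gets $u_k\to u$ strongly in $W^{1-\delta,p}(\Omega)$), apply Fatou's lemma to pass $k\to\infty$ in the $\mathcal{W}^{s_0,p}(\Omega)$-seminorm, and only afterwards send $s_0\to1^-$ via Corollary \ref{cor:limites}. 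Your ``localization argument reducing to \eqref{eq:BBM.split} and weak lower semicontinuity'' omits exactly this two-step passage. Also note: for the $L^p$-precompactness itself, the paper does not slice; it uses the continuous embedding $\mathcal{W}^{s,p}\hookrightarrow W^{s,p}$ of Lemma \ref{lema:contincl} (whose constant is uniform in $s$) and then cites the classical compactness result \cite[Corollary 7]{Bourgain}. Slicing gives you compactness for the one-dimensional sections but does not directly yield joint precompactness in $L^p(\Omega)$ without further work, so that step of your sketch also needs to be replaced by something concrete. In short, the blueprint is right, but the load-bearing technical lemma (\ref{lema:incluentres}) is missing and your proposed substitute is not a workable replacement.
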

	
	Observe that the limit value, $\lambda_1(1,p)$,
	is the first eigenvalue of the following eigenvalue problem
	\[
		\begin{cases}
			-K_{n,p}\Delta_{p,x} u - K_{m,p} \Delta_{p,y}u
			=\lambda 
			|u|^{p-2}u &\text{ in }\Omega,\\
			u=0 &\text{ on }\partial\Omega.
		\end{cases}
	\]

	Concerning the limit as $p\to \infty$ (for a fixed $s$) for the first 
	eigenvalue we have the following result.
	
	\begin{teo} \label{teo.lim.intro} 
	It holds that
	\[
		\lim_{p\to \infty} [\lambda_1(s,p)]^{\nicefrac1p} 
		= \Lambda_{\infty} (s)
	\]
	where
	\[
		\Lambda_{\infty} (s) \coloneqq \inf
		\left\{[u]_{\mathcal{W}^{s,\infty}(\mathbb{R}^{n+m})}\colon 
		u\in \mathcal{W}^{s,\infty}(\mathbb{R}^{n+m}),  
		\|u\|_{L^\infty (\Omega)}=1, u=0 \mbox{ in }\Omega^c\right\}.
	\]
	In addition, the eigenfunctions $u_p$ normalized by 
	$\|u_{p}\|_{L^p (\Omega)}=1$ converge along subsequences 
	$p_n \to \infty$ uniformly to a continuous limit $u_\infty$, 
	that is a nontrivial viscosity solution to  	
	\[
		\begin{cases}
			\max\{A; C\}= \max\{-B;-D; \Lambda_\infty (s) u\} 
			&\mbox{in } \Omega, \\
			u=0 &\mbox{ in }\Omega^c,\\
		\end{cases}
	\]
	with
		\begin{align*}
			&A= \sup_w \frac{u(x,w)-u(x,y)}{|y-w|^{s}},
			&B= \inf_w \frac{u(x,w)- u(x,y)}{|y-w|^{s}},\\
			&C= \sup_z \frac{u (z,y)- u(x,y)}{|x-z|^{s}},
			&D= \inf_z \frac{u(z,y)- u(x,y)}{|x-z|^{s}}.
		\end{align*}
	\end{teo}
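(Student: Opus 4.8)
The plan is to combine a $\Gamma$-convergence/direct argument for the variational quantities with a viscosity-solution passage to the limit in the PDE. First I would establish the convergence of the eigenvalues. For the upper bound, fix any admissible test function $u\in\mathcal{W}^{s,\infty}(\mathbb{R}^{n+m})$ with $u=0$ in $\Omega^c$ and $\|u\|_{L^\infty(\Omega)}=1$; using $[u]_{\mathcal{W}^{s,p}(\mathbb{R}^{n+m})}\le C(\Omega,s)\,[u]_{\mathcal{W}^{s,\infty}(\mathbb{R}^{n+m})}\,\|1\|_{L^p}^{1/p}$-type estimates (integrating the bounded difference quotients over the bounded set $\Omega$) together with $\|u\|_{L^p(\Omega)}\to\|u\|_{L^\infty(\Omega)}$ as $p\to\infty$, one gets $\limsup_{p\to\infty}[\lambda_1(s,p)]^{1/p}\le [u]_{\mathcal{W}^{s,\infty}(\mathbb{R}^{n+m})}$, hence $\le\Lambda_\infty(s)$ after taking the infimum. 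For the lower bound I would take the normalized eigenfunctions $u_p$ with $\|u_p\|_{L^p(\Omega)}=1$ and show, via the bound $[\lambda_1(s,p)]^{1/p}\le$ (something bounded) coming from the just-proved upper bound, that $[u_p]_{\mathcal{W}^{s,q}(\mathbb{R}^{n+m})}$ stays bounded for every fixed $q$; then by a diagonal/Morrey-type argument (the spaces $\mathcal{W}^{s,q}$ embed into Hölder spaces for $q$ large since $sq>n+m$) the $u_p$ are precompact in $C(\overline{\Omega})$, so along a subsequence $u_p\to u_\infty$ uniformly with $\|u_\infty\|_{L^\infty(\Omega)}=1$, $u_\infty=0$ in $\Omega^c$, and lower semicontinuity of the $\mathcal{W}^{s,\infty}$-seminorm under uniform convergence gives $[u_\infty]_{\mathcal{W}^{s,\infty}(\mathbb{R}^{n+m})}\le\liminf_p[\lambda_1(s,p)]^{1/p}$, whence $\liminf_p[\lambda_1(s,p)]^{1/p}\ge\Lambda_\infty(s)$.

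Next I would identify the limit equation. The idea is the standard one for $p\to\infty$ problems: rewrite the weak formulation of $\mathcal{L}_{s,p}u_p=\lambda_1(s,p)|u_p|^{p-2}u_p$ in viscosity form and pass to the limit. More precisely, one first checks that a weak solution of \eqref{eq:EDP.intro} is a viscosity solution of the same equation (this is by now routine for fractional $p$-Laplacian type operators, using the continuity of $u_p$ and testing against smooth functions touching from above/below). Then, given a smooth $\varphi$ touching $u_\infty$ strictly from below at an interior point $(x_0,y_0)$, one produces points $(x_p,y_p)\to(x_0,y_0)$ where $\varphi$ (plus a small correction) touches $u_p$ from below, writes the viscosity inequality for $u_p$ at $(x_p,y_p)$, normalizes by dividing by $[\lambda_1(s,p)]$ raised to an appropriate power, and lets $p\to\infty$. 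The key algebraic fact is that, after taking $p$-th roots, the sums of $(p-2)$-powers of difference quotients collapse to maxima: the $x$-integral term contributes $\max\{C;-D\}$ in the limit (more precisely, the positive-part supremum $C$ competing against the absolute value of the negative-part infimum $-D$), the $y$-integral term contributes $\max\{A;-B\}$, and the right-hand side $[\lambda_1(s,p)]|u_p|^{p-2}u_p$, after the same scaling, produces the term $\Lambda_\infty(s)\,u_\infty$. Assembling the inequalities (and the symmetric ones for touching from above) yields exactly
\[
\max\{A;C\}\ge\max\{-B;-D;\Lambda_\infty(s)u\}\quad\text{and the reverse at sub-touching points,}
\]
i.e. $u_\infty$ is a viscosity solution of the stated problem; nontriviality is immediate from $\|u_\infty\|_{L^\infty(\Omega)}=1$.

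The main obstacle, I expect, is the passage to the limit inside the \emph{nonlocal} principal value integrals: unlike the local $\infty$-Laplacian case, here the viscosity inequality for $u_p$ at $(x_p,y_p)$ involves global integrals of $|u_p(x_p,y_p)-u_p(z,y_p)|^{p-2}(\cdots)/|x_p-z|^{n+sp}$, and one must control the \emph{tails} (the contribution of $z$ far from $x_p$) uniformly in $p$, splitting the integral into a near part where $\varphi$ controls $u_p$ from below and a far part where one uses the uniform bound $\|u_p\|_{L^\infty}\le C$ together with the rapidly decaying kernel. One then has to argue that, after dividing by $[\lambda_1(s,p)]$ and taking $p$-th roots, only the term realizing the essential supremum of the difference quotient survives, which requires a careful comparison of the scales $|x_p-z|^{-(n+sp)/p}$ against $[\lambda_1(s,p)]^{-1/p}$ and a Laplace-method-type concentration argument. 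The secondary technical point is verifying the embedding $\mathcal{W}^{s,q}(\mathbb{R}^{n+m})\hookrightarrow C^{0,\alpha}$ for the anisotropic spaces introduced earlier in the paper, but this should follow from the (already established) properties of those spaces by summing the one-directional Morrey estimates in $x$ and in $y$.
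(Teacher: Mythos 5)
The main gap is in your upper bound. You propose to fix an arbitrary admissible $u\in\mathcal{W}^{s,\infty}(\mathbb{R}^{n+m})$ and bound $[u]_{\mathcal{W}^{s,p}(\mathbb{R}^{n+m})}$ by something like $C\,[u]_{\mathcal{W}^{s,\infty}}$ times a power of $|\Omega|$. This does not work: a function that is exactly $s$--H\"older is not, in general, in $\mathcal{W}^{s,p}$, because writing the integrand as
\[
\frac{|u(x,y)-u(z,y)|^p}{|x-z|^{n+sp}}=\Bigl(\frac{|u(x,y)-u(z,y)|}{|x-z|^{s}}\Bigr)^p\frac{1}{|x-z|^{n}}
\]
exhibits the measure $dz\,dx\,dy/|x-z|^{n}$, which is \emph{not} finite near the diagonal; a uniform bound on the $s$--H\"older quotient gives nothing (think of a Weierstrass--type $C^{0,s}$ function: its $W^{s,p}$ seminorm is infinite for every $p$). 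So the inequality $\limsup_p [\lambda_1(s,p)]^{1/p}\le[u]_{\mathcal{W}^{s,\infty}}$ cannot be obtained from an arbitrary competitor $u$ in the definition of $\Lambda_\infty(s)$, and you would need either a mollification/approximation step or an explicit near--extremal competitor with some extra regularity. The paper avoids this entirely by first proving the geometric characterization $\Lambda_\infty(s)=1/R_s$ and then plugging in the \emph{specific} functions $u_\alpha(x,y)=\bigl(1-(|x-x_0|^{\alpha s}+|y-y_0|^{\alpha s})/R_{s\alpha}\bigr)_+$ with $\alpha>1$, $s\alpha<1$, whose $s\alpha$--H\"older regularity guarantees $u_\alpha\in\widetilde{\mathcal{W}}^{s,p}(\Omega)$ for every $p$; the bound $[u_\alpha]_{\mathcal{W}^{s,\infty}(\Omega)}\le 2^{\alpha-1}\diam(\Omega)^{(\alpha-1)s}/R_s^{\alpha}$ then lets one send $\alpha\to1^+$ to recover $1/R_s$. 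This use of the geometric characterization is not cosmetic; it is what supplies the regular competitors your argument is missing.

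The rest of your proposal is structurally correct and essentially parallels the paper: the $\liminf$ part via uniform $W^{s,p_j}$ bounds, H\"older to get a fixed $W^{t,q}$ bound, compact embedding into $C^{0,\gamma}(\overline\Omega)$, uniform convergence, $\|u_\infty\|_{L^\infty(\Omega)}=1$, and Fatou in $p$ then $q\to\infty$ to get $[u_\infty]_{\mathcal{W}^{s,\infty}}\le\liminf_p[\lambda_1(s,p)]^{1/p}$; and the viscosity passage via touching test functions, splitting into positive/negative parts, taking $p$-th roots and using the algebraic lemma. Two small remarks: the tail issue you flag is genuine but mild in the paper's setting because the test function $\phi$ is $C^1_0$, so the difference quotient $(\phi(x_p,y_p)-\phi(z,y_p))/|x_p-z|^s$ is globally bounded and $(\int f^p\,d\nu)^{1/p}\to\|f\|_{L^\infty(\nu)}$ applies directly (the paper's short Lemma for $(A_p)^{1/p}\to A$); and the Morrey embedding you want for $\mathcal{W}^{s,q}$ is obtained in the paper by first passing to the isotropic $W^{s,q}$ via Lemma~\ref{lema:contincl} and then invoking Theorem~\ref{teo:compacemb}, rather than proving a separate anisotropic Morrey estimate.
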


	We can give a simple geometric characterization of the limit value 
	$\Lambda_\infty(s)$, 
	this value is related to the maximum distance (measured 
	in a way that involves the exponent $s$, see below) from one point 
	$(x,y) \in \Omega$ to the boundary. In fact,
	$$
		\Lambda_\infty(s) =  \frac{1}{ \displaystyle
		\max_{(x,y) \in \Omega} 
		\displaystyle\min_{(z,w) \in \partial \Omega}  
		(|x-z|^{s}+|y-w|^{s})}.
	$$
	
	That the limit equation is verified in the viscosity sense and 
	involve quotients of the form
	$\frac{u(x,w)-u(x,y)}{|y-w|^{s}}$ is not surprising. In fact, viscosity 
	solutions provide the right framework to deal with limits of 
	$p-$Laplacians as $p\to \infty$, see \cite{ACJ,BBM,JLM},  and quotients
	like the one mentioned above appeared in other related limits, see 
	\cite{Moneau,FP,LL}. 
	What is remarkable in the limit equation is that it involves the limit value 
	$\Lambda_\infty(s)$ and that the
	quotients that appear have perfectly identified the two groups of variables 
	that are present in the fractional pseudo $p-$Laplacian that we 
	introduced here.
	
	Our results say that we can take the limits as $s \to 1^-$
	and as $p\to \infty$ in the first eigenvalue. With the above
	notations we have the following commutative diagram
	\begin{equation}\label{diagrama.conmut}
	    \begin{CD}
			 ( (1-s) \lambda_1 (s,p))^{\nicefrac{1}{p}} @>>
			 {s\to1^-}>(\lambda_1 (1,p))^{\nicefrac{1}{p}}\\
			@V{p\to\infty}VV @VV{p\to\infty}V\\
			\Lambda_\infty (s) @>>{s\to1^-}>\Lambda_{\infty} .
		\end{CD}
	\end{equation}
	Here
	\[
		\Lambda_{\infty} \coloneqq
		 \frac{1}{ \displaystyle
		 \max_{(x,y) \in \Omega} 
		 \displaystyle\min_{(z,w) \in \partial \Omega} (|x-z|+|y-w|)}.
	\]
	The limit 	
	\[
		\lim_{p\to \infty} (\lambda_1 (1,p))^{\nicefrac{1}{p}} 
		= \Lambda_{\infty}
	\]
	can be obtained as in \cite{JLM} using the variational characterization of 
	$\lambda_1 (1,p)$ given in \eqref{eq:sa1.intro}. We omit the details.

	To end this introduction, let us comment on previous results.
	The limit as $p\to\infty$ of the first eigenvalue 
	$\lambda_{p}^D$ of the usual local
	$p$-Laplacian with Dirichlet boundary condition was studied in 
	\cite{JLM,JL}, (see also \cite{BK} for an anisotropic version). 
	In those papers 
	the authors prove that
	$$
		\lambda_{\infty}^D\coloneqq
		\lim_{p\to +\infty}\left(\lambda_{p}^D\right)^{1/p}=
 		\inf \left\{
 		\displaystyle \frac{ \displaystyle 
 		\|\nabla v\|_{L^\infty(\Omega)}}
	    {\displaystyle \|v\|_{L^\infty (\Omega)} }\colon
	    v\in W^{1,\infty}_0 (\Omega), v\not\equiv0\right\}
		= \frac{1}{R},
	$$
	where $R$ is the largest possible radius of a ball contained 
	in $\Omega$.
	In addition, it was shown the existence of extremals, i.e. 
	functions where the above infimum is attained.
	These extremals can be constructed taking the limit as 
	$p\to \infty$ in the eigenfunctions of the $p-$Laplacian 
	eigenvalue problems (see \cite{JLM}) and are viscosity solutions of 
	the following eigenvalue problem 
	(called the infinity eigenvalue problem in the literature)
	\begin{equation*}
		\begin{cases}
			\min \left\{|D u|-\lambda_{\infty}^D u,\, 
			\Delta_{\infty} u \right\}=0 &\text{in }\Omega,\\
			u=0& \mbox{on } \partial \Omega.
		\end{cases}
	\end{equation*}
	The limit operator
	$\Delta_{\infty}$ that appears here
	is the $\infty$-Laplacian given by
	$\Delta_\infty u = -\langle D^2u Du, Du \rangle.$
	Remark that solutions to 
	$\Delta_p v_p =0$ 
	with a Dirichlet data $v_p=f$ on $\partial \Omega$ converge as 
	$p\to \infty$ to the viscosity solution to $\Delta_\infty v =0$ 
	with $v=f$ on $\partial \Omega$, 
	see \cite{ACJ,BBM,CIL}. 
	This operator appears naturally when one considers absolutely minimizing 
	Lipschitz extensions in $\Omega$ of a boundary data $f,$ 
	see \cite{A,ACJ}. Limits of $p-$Laplacians are also relevant in mass
	transfer problems, see \cite{BBP,EG}.

	 On the other hand, the pseudo infinity Laplacian is the second 
	 order nonlinear operator given by
	$\tilde{\Delta}_\infty u = 
	\sum_{i \in I (\nabla u)} u_{x_i x_i} | u_{x_i}|^2,$
	where the sum is taken over the indexes in $I (\nabla u) = \{ i\,:
	\, | u_{x_i}| =\max_j | u_{x_j}|\}$. This operator, as happens for the 
	usual infinity Laplacian, also appears naturally as a limit of $p-$Laplace
	type problems. In fact, any possible limit of $u_p$, solutions to
	$\tilde{\Delta}_p u =\sum_{i=1}^N ( | u_{x_i}|^{p-2}  u_{x_i} )_{x_i} =0,$
	is a viscosity solution to $\tilde{\Delta}_\infty u = 0$.
	A proof of this fact is contained in \cite{BK}, where are also studied the 
	eigenvalue problem for this operator.

	Concerning regularity, we mention 
	\cite{S} where it it proved that infinity harmonic functions, that is, 	
	viscosity solutions to $ -\Delta_\infty u =0$, are $C^1$  in two dimensions 
	and \cite{Evans-Smart, Evans-Smart2} where it is proved differentiability in 
	any dimension. For the pseudo infinity Laplacian, we refer here to solutions 
	to $\tilde{\Delta}_\infty u = 0$, the optimal regularity is Lipschitz 
	continuity, see \cite{RS}.

	For references concerning nonlocal fractional
	problems we refer to \cite{DRV,Jilka,LL,Bisci1,Bisci2, Bisci3,Hich} and 
	references therein. For limits as $p\to +\infty$ in nonlocal 
	$p-$Laplacian problems and its relation with optimal mass transport 
	we refer to \cite{Jilka} (eigenvalue 
	problems were not considered there). 
	
	Finally, concerning limits as $p\to \infty$ in fractional eigenvalue 	
	problems, we mention \cite{Brasco,FP,JL}.
	In \cite{JL} the limit of the first eigenvalue for the fractional 
	$p-$Laplacian is studied while in \cite{FP} higher eigenvalues are 
	considered. We borrow ideas and techniques from these papers. In particular, 
	when we prove the fact that there is a limit 
	problem that is verified in the viscosity sense. For example, the fact that 
	continuous weak solutions to our pseudo fractional $p-$Laplacian are 
	viscosity solutions runs exactly as in \cite{JL} and hence we omit the 
	details here.

	The paper is organized as follows: In Section \ref{Prel} we collect some 
	preliminary results; in Section \ref{autovalor} we deal with our eigenvalue 
	problem and prove Theorem \ref{teo.autov.intro}; in Section \ref{limites} we analyze the limit as $s\to 1^-$, Theorem \ref{teo.s.to.1}; finally, in Section 
	\ref{limitep} we study the limit as $p\to \infty$ proving 
	Theorem \ref{teo.lim.intro}.
	
\section{Preliminaries}\label{Prel}
\setcounter{equation}{0}
	Throughout this section $s\in(0,1),$  $p\in (1,+\infty],$
	$\Omega$ is an open set of $\mathbb{R}^{n+m}.$ 
	We henceforth use the notation:
	\begin{itemize}
		\item  $(x,y)=(x_1,\dots,x_n,x_{n+1},\dots,x_{n+m})
				\in \mathbb{R}^{n+m}$ with $x=(x_1,\dots,x_n)\in \mathbb{R}^{n}$ and
				$y=(x_{n+1},\dots,x_{n+m}) \in \mathbb{R}^{m};$
		\item $\Omega^2=\Omega\times \Omega;$
		\item $\Omega_x=\{y\in \mathbb{R}^m\colon (x,y)\in \Omega\},$
		and 
		$\Omega_y=\{x\in \mathbb{R}^n\colon (x,y)\in \Omega\};$
		\item $B^N(x,r)$ denotes the ball of $N-$ball of radius 
			$r$ and center $x,$ and 
			$\omega_N$ denotes the $(N-1)-$dimensional Hausdorff measure
			of the $N-$sphere of radius 1;
		\item $(a)^{p-1}=|a|^{p-2}a.$
	\end{itemize}

	Given a measurable function 
	$u\colon\Omega\to\mathbb{R},$ we set for $p<+\infty,$
	\[
		\|u\|_{\scriptstyle \lpp}^p
		\coloneqq\int_{\Omega}|u(x,y)|^p\,dxdy,
	\]
	\[
		|u|_{\scriptstyle\wspp}^p
				=\int_{\Omega^2}
		\dfrac{|u(x,y)-u(z,w)|^p}{|(x,y)-(z,w)|^{n+m+sp}} dxdydzdw, 	
	\]
	\begin{align*}
		[u]_{\scriptstyle\wspp}^p
				=\int_{\Omega}\int_{\Omega_y}&
		\dfrac{|u(x,y)-u(z,y)|^p}{|x-z|^{n+sp}} dzdxdy\\
		&+
		\int_{\Omega}\int_{\Omega_x}
		\dfrac{|u(x,y)-u(x,w)|^p}{|y-w|^{m+sp}} dwdxdy 	
	\end{align*}
	 and for $p=+\infty,$ 
	\[
		|u|_{\scriptstyle W^{\scriptscriptstyle s,\infty}(\Omega)}
		=\sup\left\{
		\dfrac{|u(x,y)-u(z,y)|}{|(x,y)-(z,w)|^{s}}\colon
		(x,y)\neq(z,w)\in \Omega\right\}
		{=|u|_{C^{0,s}(\Omega)}},
	\]
	\begin{align*}
		[u]_{\scriptstyle \mathcal{W}^{\scriptscriptstyle s,\infty}
		(\Omega)}
		&=\max\left\{\sup\left\{
		\dfrac{|u(x,y)-u(z,y)|}{|x-z|^{s}}\colon (x,y)\neq(z,y)\in \Omega
		\right\}\right.;\\
		&\qquad\qquad\left.\sup\left\{
		\dfrac{|u(x,y)-u(x,w)|}{|y-w|^{s}}\colon (x,y)\neq(x,w)
		\in \Omega\right\}
		\right\}.
	\end{align*}
	
	We denote by $\wsp$ (here $p$ can be $+\infty$) the usual fractional Sobolev space, that is
	$
		\wsp\coloneqq \left \{ u\in \lp
		\colon |u|_{\scriptstyle \wspp} <+\infty \right \}.
	$
	
	We introduce
	the space $\displaystyle\wwsp$ 
	(again here $p$ can be $+\infty$) as follows:
	\[
		\wwsp\coloneqq
		\left\{u\in \lp\colon
		[u]_{\scriptstyle\wwspp}^p<\infty
		\right\}.
	\]
This space is a Banach space. We state this as a proposition but we omit its proof that is
standard.
	
	\begin{prop}
		The space $\displaystyle\wwsp$ 
		endowed with the norm
		\[
			\| u\|_{\scriptstyle\wwspp}=
			\left(\| u\|_{\scriptstyle \lpp}^p
			+[u]_{\scriptstyle \wwspp}^p \right)^{\nicefrac1p}
		\]
		is a Banach space. Moreover $\displaystyle\wwsp$ is separable
		for $1\le p<+\infty$ and it is reflexive for $1<p<\infty.$
	\end{prop}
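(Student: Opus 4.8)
The plan is to realize $\wwsp$ as an isometric copy of a closed subspace of a finite product of Lebesgue spaces; completeness, separability and reflexivity then follow from the corresponding classical properties of $L^{p}$. To this end introduce the two measure spaces
\[
	X_{1}\coloneqq\{(x,y,z)\in\mathbb{R}^{n+m}\times\mathbb{R}^{n}\colon (x,y)\in\Omega,\ (z,y)\in\Omega\},\qquad
	d\mu_{1}\coloneqq\frac{dx\,dy\,dz}{|x-z|^{n+sp}},
\]
\[
	X_{2}\coloneqq\{(x,y,w)\in\mathbb{R}^{n+m}\times\mathbb{R}^{m}\colon (x,y)\in\Omega,\ (x,w)\in\Omega\},\qquad
	d\mu_{2}\coloneqq\frac{dx\,dy\,dw}{|y-w|^{m+sp}},
\]
which are $\sigma$-finite (split into the regions where $|x-z|>1/k$, resp.\ $|y-w|>1/k$, and $|(x,y)|<k$: on each the kernel is bounded and the domain has finite Lebesgue measure). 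For $u\in\wwsp$ put $Tu\coloneqq(u,\delta_{1}u,\delta_{2}u)$, where $\delta_{1}u(x,y,z)\coloneqq u(x,y)-u(z,y)$ and $\delta_{2}u(x,y,w)\coloneqq u(x,y)-u(x,w)$. Then $T$ is linear, it maps $\wwsp$ into the Banach space $E\coloneqq\lp\times L^{p}(X_{1},\mu_{1})\times L^{p}(X_{2},\mu_{2})$ with norm $\|(f,g,h)\|_{E}\coloneqq(\|f\|_{\lpp}^{p}+\|g\|_{L^{p}(X_{1},\mu_{1})}^{p}+\|h\|_{L^{p}(X_{2},\mu_{2})}^{p})^{1/p}$, and a direct computation shows $\|Tu\|_{E}=\|u\|_{\wwspp}$. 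In particular $\|\cdot\|_{\wwspp}$ is a genuine norm (homogeneity and the triangle inequality are transported from $\|\cdot\|_{E}$, and positivity is immediate from the first coordinate), and $T$ is an isometric embedding. For $p=+\infty$ the argument is identical, with essential suprema replacing the $L^{p}$-norms and a maximum replacing the $\ell^{p}$-sum.

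Next I would prove that the range $T(\wwsp)$ is closed in $E$. Suppose $(u_{k})_{k}\subset\wwsp$ and $Tu_{k}\to(f,g,h)$ in $E$. Then $u_{k}\to f$ in $\lp$, so after passing to a subsequence (not relabelled) $u_{k}\to f$ a.e.\ in $\Omega$, say off a Lebesgue-null set $N\subset\Omega$. Each $\mu_{i}$ is absolutely continuous with respect to Lebesgue measure (its density is finite off the diagonal), and the sets $\{(x,y,z)\in X_{1}\colon(x,y)\in N\}$ and $\{(x,y,z)\in X_{1}\colon(z,y)\in N\}$ (together with their $X_{2}$-analogues) are Lebesgue-null because $N$ is; hence $\delta_{1}u_{k}\to\delta_{1}f$ $\mu_{1}$-a.e.\ and $\delta_{2}u_{k}\to\delta_{2}f$ $\mu_{2}$-a.e. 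Comparing these pointwise limits with the $L^{p}$-limits $g$ and $h$ (along a further subsequence on which convergence also holds a.e.) gives $g=\delta_{1}f$ and $h=\delta_{2}f$, so $[f]_{\wwspp}^{p}=\|g\|_{L^{p}(X_{1},\mu_{1})}^{p}+\|h\|_{L^{p}(X_{2},\mu_{2})}^{p}<\infty$. Thus $f\in\wwsp$ and $(f,g,h)=Tf\in T(\wwsp)$, which proves closedness.

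The three claims now follow from standard facts: a finite product of $L^{p}$-spaces is a Banach space, hence $E$ is Banach and its closed subspace $T(\wwsp)\cong\wwsp$ is complete; for $1\le p<\infty$ the spaces $\lp$ and $L^{p}(X_{i},\mu_{i})$ are separable (as $L^{p}$-spaces of $\sigma$-finite Borel measures on subsets of Euclidean space), hence $E$ and its subspace $\wwsp$ are separable; and for $1<p<\infty$ these $L^{p}$-spaces are uniformly convex, hence reflexive, a finite product of reflexive spaces is reflexive, and a closed subspace of a reflexive space is reflexive, so $\wwsp$ is reflexive.

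The only step carrying any real content is the closedness of $T(\wwsp)$: one must pass to subsequences to upgrade $L^{p}$-convergence to pointwise convergence both on $\Omega$ and with respect to the singular measures $\mu_{i}$, and then exploit that a Lebesgue-null subset of $\Omega$ produces a $\mu_{i}$-null exceptional set on $X_{i}$ because $\mu_{i}$ is absolutely continuous. All the remaining ingredients are classical properties of Lebesgue spaces, which is why the statement is quoted here without proof.
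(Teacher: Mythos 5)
The paper gives no proof of this proposition; it states that the proof ``is standard'' and omits it. Your argument---embed $\wwsp$ isometrically into $L^p(\Omega)\times L^{p}(X_{1},\mu_{1})\times L^{p}(X_{2},\mu_{2})$ via $u\mapsto(u,\delta_{1}u,\delta_{2}u)$, show the image is closed, and transport completeness, separability, and reflexivity from the product---is exactly the standard route for spaces of this type (it is the usual proof for $W^{1,p}$ via $u\mapsto(u,\nabla u)$, and for $W^{s,p}$ via $u\mapsto(u,\delta u)$). The closedness argument is also correct: absolute continuity of $\mu_{i}$ with respect to Lebesgue measure is precisely what lets you carry a Lebesgue-null exceptional set on $\Omega$ to a $\mu_{i}$-null set on $X_{i}$, and then matching the a.e.\ limits identifies $g=\delta_{1}f$, $h=\delta_{2}f$.

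One small slip worth fixing: in your exhaustion showing $\sigma$-finiteness of $\mu_{1}$ you restrict to $|x-z|>1/k$ and $|(x,y)|<k$, but you never bound $|z|$. Since the preliminaries only assume $\Omega$ open (not bounded), the $z$-slice $\{z:(z,y)\in\Omega,\ |x-z|>1/k\}$ can have infinite Lebesgue measure, so as written the pieces of the exhaustion need not have finite $\mu_{1}$-measure. Adding the constraint $|z|<k$ (and $|w|<k$ for $X_{2}$) repairs this; with that, each piece has bounded density on a set of finite Lebesgue measure, the union is all of $X_{1}$ off the $\mu_{1}$-null diagonal $\{x=z\}$, and $\sigma$-finiteness follows. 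Everything else in the proof goes through unchanged.
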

	
	For  $u\colon \Omega\to\mathbb{R}$ a measurable function,
	we set
	\[
		u_{+}(x,y)=\max\{u(x,y),0\}\quad\mbox{ and }\quad
		u_{-}(x,y)=\min\{-u(x,y),0\}.
	\]
	Observe that
	\[
			|u_{\pm}(x,y)-u_{\pm}(z,w)|\le |u(x,y)-u(z,w)|
	\]
	for all $(x,y),(z,w)\in\Omega.$ Therefore, we have
	
	\begin{lem}\label{lema:aux1}
		Let $\mathcal{X}=\wsp$ or
		$\wwsp.$ If
		$u\in \mathcal{X}$ then $u_{+}, u_{-}\in
		\mathcal{X}.$
	\end{lem}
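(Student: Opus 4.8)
The plan is to reduce everything to the pointwise inequality $|u_\pm(x,y)-u_\pm(z,w)|\le |u(x,y)-u(z,w)|$ already recorded just above the statement, and then check that this controls each of the three quantities $\|\cdot\|_{L^p(\Omega)}$, $|\cdot|_{W^{s,p}(\Omega)}$, and $[\cdot]_{\mathcal{W}^{s,p}(\Omega)}$ defining membership in $\mathcal{X}$. First I would treat the $p<\infty$ case. Since $0\le u_\pm(x,y)\le |u(x,y)|$ pointwise, we immediately get $\|u_\pm\|_{L^p(\Omega)}\le \|u\|_{L^p(\Omega)}<\infty$, so the $L^p$ requirement is inherited. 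Then, for $\mathcal{X}=W^{s,p}(\Omega)$, apply the pointwise inequality inside the Gagliardo integral defining $|u_\pm|_{W^{s,p}(\Omega)}^p$: the integrand for $u_\pm$ is dominated by the integrand for $u$ at every pair $((x,y),(z,w))\in\Omega^2$, hence $|u_\pm|_{W^{s,p}(\Omega)}\le |u|_{W^{s,p}(\Omega)}<\infty$.

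For $\mathcal{X}=\mathcal{W}^{s,p}(\Omega)$ the argument is the same but applied separately to the two anisotropic pieces of the seminorm $[\,\cdot\,]_{\mathcal{W}^{s,p}(\Omega)}^p$. On slices with $w=y$ the inequality reads $|u_\pm(x,y)-u_\pm(z,y)|\le |u(x,y)-u(z,y)|$, so after raising to the $p$-th power, dividing by $|x-z|^{n+sp}$, and integrating $dz\,dx\,dy$ over $\Omega\times\Omega_y$, the $x$-directional term for $u_\pm$ is bounded by that of $u$; symmetrically for the $y$-directional term with $z=x$. Adding the two gives $[u_\pm]_{\mathcal{W}^{s,p}(\Omega)}\le [u]_{\mathcal{W}^{s,p}(\Omega)}<\infty$. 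Since $u\in\mathcal{X}$ forces $u\in L^p(\Omega)$ and the relevant seminorm finite, both conditions are inherited and $u_\pm\in\mathcal{X}$.

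Finally I would dispose of the $p=\infty$ case, which is even more direct: the pointwise inequality says that any Hölder-type difference quotient for $u_\pm$ is bounded by the corresponding one for $u$, so taking suprema gives $|u_\pm|_{W^{s,\infty}(\Omega)}\le |u|_{W^{s,\infty}(\Omega)}$ and, taking the max of the two directional suprema, $[u_\pm]_{\mathcal{W}^{s,\infty}(\Omega)}\le [u]_{\mathcal{W}^{s,\infty}(\Omega)}$; combined with $\|u_\pm\|_{L^\infty(\Omega)}\le\|u\|_{L^\infty(\Omega)}$ this yields $u_\pm\in\mathcal{X}$.

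There is essentially no obstacle here: the only thing one really uses is the elementary inequality $|\max\{a,0\}-\max\{b,0\}|\le|a-b|$ (and its analogue for the negative part), which is exactly the pointwise estimate displayed before the lemma. The proof is therefore just a matter of inserting that inequality into each of the three defining quantities and invoking monotonicity of the integral (or of the supremum); no compactness, density, or regularity input is needed. If anything deserves a word of care it is only the bookkeeping of which slice variable is frozen in the two pieces of the anisotropic seminorm $[\,\cdot\,]_{\mathcal{W}^{s,p}(\Omega)}$, but this is purely notational.
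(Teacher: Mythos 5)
Your proof is correct and follows exactly the route the paper intends: the paper displays the pointwise inequality $|u_\pm(x,y)-u_\pm(z,w)|\le|u(x,y)-u(z,w)|$ immediately before the lemma and then states the lemma as an immediate consequence, which is precisely what you carry out by inserting the inequality into each defining quantity. Nothing is missing.
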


	For $1\le p<\infty,$  we denote by 
	$\widetilde{\mathcal{W}}^{s,p}(\Omega)$ the space of all
	$u\in \wwsp$ such that $\tilde{u}\in \wwspr$ where $\tilde{u}$ is
	the extension by zero of $u.$

	The next result can be found in \cite{AF,DD}.
						
	\begin{teo}\label{teo:compacemb}
			Under the assumptions {\rm A1} and {\rm A2}
			we have that
			\begin{itemize}
				\item If $sp<n+m,$ then  
				$\wsp$
				 is compactly embedded  in $L^q(\Omega)$
				 for all $1\le q<p^\star_s=\nicefrac{(n+m)p}{(n+m-sp)}.$
				\item If $sp=n+m,$ then  $\wsp$
				 is compactly embedded  in $L^q(\Omega)$ for all 
				 $1\le q<\infty.$
				\item If $sp>n+m,$ then  $\wsp$
				is compactly embedded  in
				$C^{0,\lambda}(\overline{\Omega})$ with
				$\lambda<s-\nicefrac{(n+m)}{p}.$
			\end{itemize}
	\end{teo}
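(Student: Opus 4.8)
The plan is to run the classical argument for fractional Sobolev embeddings (cf.\ \cite{AF,DD}), with $N\coloneqq n+m$, reducing every assertion to $\mathbb{R}^{N}$ by extension. The one genuinely nontrivial ingredient is a \emph{bounded linear extension operator} $E\colon\wsp\to W^{s,p}(\mathbb{R}^{N})$ with $(Eu)|_{\Omega}=u$, $\supp(Eu)\subset K$ for a fixed bounded set $K\supset\overline{\Omega}$, and $\|Eu\|_{W^{s,p}(\mathbb{R}^{N})}\le C\,\|u\|_{\wspp}$; this is precisely where assumption {\rm A1} (boundedness and Lipschitz regularity of $\partial\Omega$) enters. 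Granting $E$, a bounded $\mathcal{F}\subset\wsp$ is carried to a bounded family $E\mathcal{F}\subset W^{s,p}(\mathbb{R}^{N})$ of functions supported in $K$, and since $(Eu)|_{\Omega}=u$ it is enough to show that $E\mathcal{F}$ is precompact in the relevant target space. As continuous embeddings I would quote the classical fractional Sobolev inequality $\|v\|_{L^{p^{\star}_{s}}(\mathbb{R}^{N})}\le C\,|v|_{W^{s,p}(\mathbb{R}^{N})}$ for $sp<N$, where $p^{\star}_{s}=\nicefrac{Np}{(N-sp)}$ (so $\wsp\hookrightarrow L^{q}(\Omega)$ for all $q\le p^{\star}_{s}$, $\Omega$ being bounded); for $sp=N$, comparison with $W^{s',p}(\Omega)$, $s'<s$ (on a bounded domain $|v|_{W^{s',p}(\Omega)}\le(\diam\Omega)^{s-s'}|v|_{\wspp}$), together with $p^{\star}_{s'}\to\infty$ as $s'\uparrow s$ gives $\wsp\hookrightarrow L^{q}(\Omega)$ for every $q<\infty$; and for $sp>N$ the fractional Morrey inequality gives $\wsp\hookrightarrow C^{0,s-N/p}(\overline{\Omega})$.

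The core of the compactness is the elementary averaging estimate: writing $(v)_{\xi,\varepsilon}$ for the mean of $v$ on $B^{N}(\xi,\varepsilon)$, Jensen's inequality, Fubini, and the bound $|B^{N}(\xi,\varepsilon)|^{-1}\le C\,\varepsilon^{sp}|\zeta|^{-(N+sp)}$ for $|\zeta|<\varepsilon$ give
\[
\int_{\mathbb{R}^{N}}\bigl|v(\xi)-(v)_{\xi,\varepsilon}\bigr|^{p}\,d\xi\le C(N,s,p)\,\varepsilon^{sp}\,|v|_{W^{s,p}(\mathbb{R}^{N})}^{p}
\]
for every $v\in W^{s,p}(\mathbb{R}^{N})$ (this estimate also underlies the Sobolev and Morrey inequalities above). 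Set $A_{\varepsilon}v(\xi)\coloneqq(v)_{\xi,\varepsilon}$, i.e.\ $A_{\varepsilon}v=v\ast\bigl(|B^{N}(0,\varepsilon)|^{-1}\mathbf{1}_{B^{N}(0,\varepsilon)}\bigr)$. For each \emph{fixed} $\varepsilon\in(0,1)$, H\"older's inequality shows that $\{A_{\varepsilon}v\colon v\in E\mathcal{F}\}$ is uniformly bounded and equicontinuous, with supports in a fixed bounded set, hence precompact in $L^{\infty}(\mathbb{R}^{N})$ by Arzel\`a--Ascoli and, having uniformly bounded support, in $L^{p}(\mathbb{R}^{N})$ as well. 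On the other hand the displayed estimate together with $|Ev|_{W^{s,p}(\mathbb{R}^{N})}\le C$ for $v\in\mathcal{F}$ yields $\sup_{v\in E\mathcal{F}}\|A_{\varepsilon}v-v\|_{L^{p}(\mathbb{R}^{N})}\le C'\varepsilon^{s}\to0$ as $\varepsilon\to0^{+}$. Hence for every $\delta>0$ a suitable $\varepsilon$ covers $E\mathcal{F}$ by finitely many balls of radius $\delta$ in $L^{p}(\mathbb{R}^{N})$; so $E\mathcal{F}$ is totally bounded, hence precompact, in $L^{p}(\mathbb{R}^{N})$, and $\mathcal{F}$ is precompact in $L^{p}(\Omega)$.

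It remains to reach the other exponents. For $q\le p$, precompactness in $L^{q}(\Omega)$ follows from that in $L^{p}(\Omega)$ since $\Omega$ is bounded. For $p<q<p^{\star}_{s}$ (or any $p<q<\infty$ when $sp=N$), pick $r\in(q,p^{\star}_{s}]$ (resp.\ $r\in(q,\infty)$) and $\theta\in(0,1)$ with $\tfrac1q=\tfrac{1-\theta}{p}+\tfrac{\theta}{r}$; then $\|u\|_{L^{q}}\le\|u\|_{L^{p}}^{1-\theta}\|u\|_{L^{r}}^{\theta}$, and as $E\mathcal{F}$ is bounded in $L^{r}$ (continuous embedding) and precompact in $L^{p}$, every sequence in it has a subsequence Cauchy in $L^{p}$ and bounded in $L^{r}$, hence Cauchy in $L^{q}$; thus $\mathcal{F}$ is precompact in $L^{q}(\Omega)$. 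Finally, for $sp>N$, a bounded family in $\wsp$ is bounded in $C^{0,s-N/p}(\overline{\Omega})$, hence precompact in $C(\overline{\Omega})$ by Arzel\`a--Ascoli, and for $\lambda<s-N/p$ the interpolation $[\phi]_{C^{0,\lambda}(\overline{\Omega})}\le 2\,\|\phi\|_{L^{\infty}(\Omega)}^{1-\lambda/(s-N/p)}[\phi]_{C^{0,s-N/p}(\overline{\Omega})}^{\lambda/(s-N/p)}$ promotes a uniformly convergent subsequence to one convergent in $C^{0,\lambda}(\overline{\Omega})$. The real work in this program is concentrated in the extension operator for Lipschitz domains (equivalently, in the Sobolev and Morrey inequalities on $\Omega$ directly); everything downstream of it is routine, which is why the statement is simply quoted from \cite{AF,DD}.
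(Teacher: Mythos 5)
Your proof is correct; note, however, that the paper does not prove this theorem but simply cites it from \cite{AF,DD}, so there is no argument in the paper to compare against. Your write-up (extension to $W^{s,p}(\mathbb{R}^{n+m})$ via the Lipschitz boundary, the averaging estimate $\|v-A_{\varepsilon}v\|_{L^{p}}\le C\varepsilon^{s}|v|_{W^{s,p}}$ plus Arzel\`a--Ascoli applied to the fixed-$\varepsilon$ mollifications to get total boundedness in $L^{p}$, $L^{p}$--$L^{r}$ interpolation for the remaining subcritical exponents together with the $s'\uparrow s$ trick for $sp=n+m$, and Morrey with H\"older-seminorm interpolation for $sp>n+m$) is the standard textbook argument and is essentially what those references present. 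All the steps check out; as you observe, the only genuinely nontrivial input for a general bounded Lipschitz domain is the bounded extension operator, and that is precisely the ingredient supplied by the cited texts.
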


	\begin{lem}\label{lema:contincl}
	 	Let $\Omega_1$ and $\Omega_2$ be open subsets of 
	 	$\mathbb{R}^n$ and $\mathbb{R}^m$ respectively. 
	 	If $\Omega=\Omega_1\times\Omega_2,$
	 	and $p\in[1,+\infty),$ then
	 	$\wwsp$ is continuously embedded in
	 	$\wsp.$ Moreover, there exists a constant $C=C(n,m)$
	 	such that 
	 	\[
	 		|u|_{\scriptstyle\wspp}^p\le C[u]_{\scriptstyle\wwspp}
	 	\]
	 	for all $u\in\wwsp.$
	 \end{lem}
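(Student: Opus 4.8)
The plan is to bound the Gagliardo seminorm $|u|_{\wspp}^p$ by the anisotropic seminorm $[u]_{\wwspp}^p$; since the $\lpp$‑norms entering the two full norms are the same, such an estimate immediately gives the continuous inclusion $\wwsp\hookrightarrow\wsp$. The mechanism is to pass from the increment $u(x,y)-u(z,w)$ to the two ``one–directional'' increments $u(x,y)-u(z,y)$ and $u(z,y)-u(z,w)$ through the mixed point $(z,y)$, and then, in each of the two resulting integrals, to perform first the integration in the variable that no longer appears in the numerator.

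First I would record that the product structure $\Omega=\Omega_1\times\Omega_2$ trivializes the slices: $\Omega_y=\Omega_1$ and $\Omega_x=\Omega_2$ for every admissible $x,y$, and — crucially — if $x,z\in\Omega_1$ and $y,w\in\Omega_2$ then $(z,y)\in\Omega$ as well (this is exactly what fails for a general $\Omega$). Using $|u(x,y)-u(z,w)|\le|u(x,y)-u(z,y)|+|u(z,y)-u(z,w)|$ together with the convexity inequality $|a+b|^p\le 2^{p-1}(|a|^p+|b|^p)$, I would split $|u|_{\wspp}^p\le 2^{p-1}(I_1+I_2)$, where
\[
I_1=\int_{\Omega^2}\frac{|u(x,y)-u(z,y)|^p}{(|x-z|^2+|y-w|^2)^{(n+m+sp)/2}}\,dx\,dy\,dz\,dw
\]
and $I_2$ is the analogous integral with $|u(z,y)-u(z,w)|^p$ in the numerator.

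The key computation is the one–variable integral. In $I_1$ the numerator does not depend on $w$, so I integrate in $w$ first (over $\Omega_2$, which I enlarge to $\mathbb{R}^m$ since the integrand is nonnegative); passing to polar coordinates and rescaling by $w\mapsto|x-z|\,w$ gives
\[
\int_{\mathbb{R}^m}\frac{dw}{(|x-z|^2+|y-w|^2)^{(n+m+sp)/2}}=\frac{C_1}{|x-z|^{n+sp}},\qquad C_1=\omega_m\int_{0}^{\infty}\frac{t^{m-1}}{(1+t^2)^{(n+m+sp)/2}}\,dt,
\]
and $C_1<\infty$ because $m\ge 1$ makes the integrand integrable at $t=0$ while $n+sp>0$ makes it integrable at $t=+\infty$. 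Hence $I_1\le C_1\int_{\Omega}\int_{\Omega_y}|u(x,y)-u(z,y)|^p|x-z|^{-(n+sp)}\,dz\,dx\,dy$, which is $C_1$ times the first half of $[u]_{\wwspp}^p$. Symmetrically, in $I_2$ the numerator does not depend on $x$; integrating first in $x\in\mathbb{R}^n$ produces the factor $C_2|y-w|^{-(m+sp)}$ with $C_2=\omega_n\int_{0}^{\infty}t^{n-1}(1+t^2)^{-(n+m+sp)/2}\,dt<\infty$, and after relabelling $z\mapsto x$ one gets $I_2$ bounded by $C_2$ times the second half of $[u]_{\wwspp}^p$. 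Combining these gives $|u|_{\wspp}^p\le 2^{p-1}\max\{C_1,C_2\}\,[u]_{\wwspp}^p$, which is the asserted inequality (with a constant that is independent of $u$), and hence the continuous embedding.

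I do not expect a genuine obstacle here: the product hypothesis removes the only structural difficulty, namely that the intermediate point $(z,y)$ remains in $\Omega$, and the rest is the explicit radial integral above plus the bookkeeping of which variable is absent from which numerator and of matching the leftover integration domain with the slice appearing in $[u]_{\wwspp}^p$. The one point to handle with some care is the convergence of the radial integrals defining $C_1$ and $C_2$, which is precisely where the ranges $s\in(0,1)$, $p\in[1,\infty)$ and the dimensions $n,m\ge 1$ are used.
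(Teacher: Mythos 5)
Your proof is correct and follows essentially the same route as the paper: split through the mixed point $(z,y)$ (valid because $\Omega$ is a product), apply the convexity inequality $|a+b|^p\le 2^{p-1}(|a|^p+|b|^p)$, and in each piece integrate out the variable absent from the numerator over all of $\mathbb{R}^m$ (resp.\ $\mathbb{R}^n$), reducing to the same convergent radial integral. The only minor difference is cosmetic: the paper goes one step further and bounds the radial integral by $\frac1m+\frac1{n+sp}\le 2$ so that the final constant is visibly independent of $s$ and $p$, whereas you leave $C_1,C_2$ in integral form, but this is immediate from your expression.
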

	 \begin{proof}
			Let $u\in\wwsp.$ We have
			\begin{equation}\label{eq:continc1}
				\begin{aligned}
				|u|_{\wsp}^p  &=
				\int_{\Omega^2}
	 			\dfrac{|u(x,y)-u(z,w)|^p}{|(x,y)-(z,w)|^{n+m+sp}}
	 			\, dxdydzdw\\
	 			&\le 2^{p-1}
	 			\int_{\Omega^2}
	 			\dfrac{|u(x,y)-u(z,y)|^p}{|(x,y)-(z,w)|^{n+m+sp}}
	 			dxdydzdw\\
	 			&\qquad +2^{p-1}
	 			\int_{\Omega^2}
	 			\dfrac{|u(z,y)-u(z,w)|^p}{|(x,y)-(z,w)|^{n+m+sp}}
	 			dxdydzdw\\
	 			&=2^{p-1}I_1+2^{p-1}I_2.
				\end{aligned}
			\end{equation}
			Now, we observe that
			\begin{align*}
				I_1&=
				\int_{\Omega^2}
	 			\dfrac{|u(x,y)-u(z,y)|^p}{|(x,y)-(z,w)|^{n+m+sp}}
	 			dxdydzdw\\
	 			&\le\int_{\Omega}\int_{\Omega_2}\int_{\mathbb{R}^m}
	 			\dfrac{|u(x,y)-u(z,y)|^p}{|(x,y)-(z,w)|^{n+m+sp}}
	 			dwdzdxdy\\
	 			&\le\int_{\Omega}\int_{\Omega_2}
	 			\dfrac{|u(x,y)-u(z,y)|^p}{|x-z|^{n+sp}}\int_{\mathbb{R}^m}
	 			\dfrac{|x-z|^{n+sp}dw }
	 			{\left(|x-z|^2+|y-w|^2\right)^{\frac{n+m+sp}2}}
	 			dzdxdy\\
	 			&=\omega_m\int_{\Omega}\int_{\Omega_2}
	 			\dfrac{|u(x,y)-u(z,y)|^p}{|x-z|^{n+sp}}
	 			dzdxdy
	 			\int_{0}^{+\infty}
	 			\dfrac{r^{m-1}}{\left(1+r^2\right)^{\frac{n+m+sp}2}}dr.
			\end{align*}
			Since
			\begin{align*}
				\int_{0}^{+\infty}
	 			\dfrac{r^{m-1}}{\left(1+r^2\right)^{\frac{n+m+sp}2}}dr
	 			&\le\int_{0}^{1}
	 			r^{m-1}dr
	 			+\int_{1}^{+\infty}
	 			\dfrac{1}{r^{n+sp+1}}dr=\dfrac1m+\dfrac{1}{n+sp}
			\end{align*}
			we have that
			\begin{equation}\label{eq:continc2}
				I_1\le 2\omega_m
				\int_{\Omega}\int_{\Omega_2}
	 			\dfrac{|u(x,y)-u(z,y)|^p}{|x-z|^{n+sp}}
	 			dzdxdy.
			\end{equation}
			One can also, in an analogous way, obtain
		 	\begin{equation}\label{eq:continc3}
				I_2\le 2\omega_n
				\int_{\Omega}\int_{\Omega_1}
	 			\dfrac{|u(x,y)-u(x,w)|^p}{|y-w|^{m+sp}}
	 			dwdxdy.
			\end{equation}
			
			By \eqref{eq:continc1}, \eqref{eq:continc2} and
			\eqref{eq:continc3}, we get
			\[
				|u|_{\wsp}\le
				C(n,m)[u]_{\wwsp}.
			\]
			This completes the proof.
		\end{proof}

		\begin{remark}
			If $p=\infty,$ it is straightforward to show that 
			$W^{s,\infty}(\Omega)\subset\mathcal{W}^{s,\infty}(\Omega).$ 
			Moreover, if 
			$\Omega=\Omega_1\times \Omega_2$ then	
			$\mathcal{W}^{s,\infty}(\Omega)=
			W^{s,\infty}(\Omega).$		
		\end{remark}	
	
	\begin{lem}\label{lema:inclu2}
	 	Let $\Omega$ be an open subset of 
	 	$\mathbb{R}^{n+m}$ and $p\in(1,\infty).$ 
	 	 If $0<t<s<1$ then
	 			$\wwsp\subset\mathcal{W}^{t,p}(\Omega),$
	 			and the embedding is continuous.
	 			Moreover 
	 			\begin{equation}\label{eq:inclu1}
					[u]_{\scriptstyle 
					\mathcal{W}^{\scriptscriptstyle t,p}(\Omega)}^p
					\le 
					[u]_{\mathcal{W}^{\scriptscriptstyle s,p}(\Omega)}^p
					+ \dfrac{2^p(\omega_{n}+\omega_{m})}{tp}
					\|u\|_{\scriptstyle
					\lpp}^p \qquad\forall u\in\wwsp.
				\end{equation}	
	 	\begin{proof}
			Let $u\in\wwsp.$ Observe that,
			\begin{align*}
				\int_{\Omega}{\int_{\Omega_y}}
				\dfrac{|u(x,y)-u(z,y)|^p}{|x-z|^{n+tp}}dzdxdy\le
				\int_{\Omega}{\int_{A_y}}
				&\dfrac{|u(x,y)-u(z,y)|^p}{|x-z|^{n+tp}}dzdxdy\\
				&+\int_{\Omega}{\int_{A_y^c}}
				\dfrac{|u(x,y)-u(z,y)|^p}{|x-z|^{n+tp}}dzdxdy
			\end{align*}
			where $A_y=\{z\in\Omega_y\colon |z-x|<1\}.$ Since $t<s,$
			we have that
			\begin{align*}
				&\int_{\Omega}{\int_{\Omega_y}}
				\dfrac{|u(x,y)-u(z,y)|^p}{|x-z|^{n+tp}}dzdxdy\le\\
				&\le\int_{\Omega}{\int_{A_y}}
				\dfrac{|u(x,y)-u(z,y)|^p}{|x-z|^{n+sp}}dzdxdy
				+2^{p-1}\int_{\Omega}{\int_{A_y^c}}
				\dfrac{|u(x,y)|^p+|u(z,y)|^p}{|x-z|^{n+tp}}dzdxdy\\
				&\le\int_{\Omega}{\int_{A_y}}
				\dfrac{|u(x,y)-u(z,y)|^p}{|x-z|^{n+sp}}dzdxdy
				+2^p\int_{\Omega}{\int_{A_y^c}}
				\dfrac{|u(x,y)|^p}{|x-z|^{n+tp}}dzdxdy\\
				&\le\int_{\Omega}{\int_{A_y}}
				\dfrac{|u(x,y)-u(z,y)|^p}{|x-z|^{n+sp}}dzdxdy
				+\dfrac{2^p\omega_{n}}{tp}\int_{\Omega}
				|u(x,y)|^pdxdy.
			\end{align*}
			
			Similarly,
			\begin{align*}
				\int_{\Omega}{\int_{\Omega_x}}
				&\dfrac{|u(x,y)-u(x,w)|^p}{|y-w|^{n+tp}}dzdxdy\le\\
				&\le\int_{\Omega}{\int_{A_x}}
				\dfrac{|u(x,y)-u(z,y)|^p}{|x-z|^{n+sp}}dzdxdy
				+\dfrac{2^p\omega_{m}}{tp}\int_{\Omega}
				|u(x,y)|^pdxdy ,
			\end{align*}
			where $A_x=\{w\in\Omega_x\colon |y-w|<1\}.$
			Therefore \eqref{eq:inclu1} holds.
		\end{proof}
	 \end{lem}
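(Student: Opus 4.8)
The plan is to establish the quantitative bound \eqref{eq:inclu1} directly; once this is in hand the rest follows for free, since \eqref{eq:inclu1} gives $\|u\|_{\mathcal{W}^{t,p}(\Omega)}^{p}\le\big(1+\tfrac{2^{p}(\omega_{n}+\omega_{m})}{tp}\big)\|u\|_{\mathcal{W}^{s,p}(\Omega)}^{p}$, which both shows that $[u]_{\mathcal{W}^{t,p}(\Omega)}<\infty$ for every $u\in\mathcal{W}^{s,p}(\Omega)$ (hence the inclusion $\mathcal{W}^{s,p}(\Omega)\subset\mathcal{W}^{t,p}(\Omega)$) and proves that this inclusion is continuous. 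By the symmetry between the two groups of variables it suffices to estimate the first of the two double integrals appearing in $[u]_{\mathcal{W}^{t,p}(\Omega)}^{p}$, namely $J\coloneqq\int_{\Omega}\int_{\Omega_{y}}|u(x,y)-u(z,y)|^{p}\,|x-z|^{-(n+tp)}\,dz\,dx\,dy$; the second integral is handled verbatim with $n$ replaced by $m$.

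First I would cut the inner domain at distance one from $x$, writing $\Omega_{y}=A_{y}\cup A_{y}^{c}$ with $A_{y}=\{z\in\Omega_{y}:|z-x|<1\}$, so that $J=J_{1}+J_{2}$, where $J_{1}$ and $J_{2}$ are the integrals over $A_{y}$ and over $A_{y}^{c}$ respectively. On $A_{y}$ one has $|x-z|<1$ and $t<s$, hence $|x-z|^{-(n+tp)}\le|x-z|^{-(n+sp)}$, and therefore $J_{1}$ is bounded above by $\int_{\Omega}\int_{\Omega_{y}}|u(x,y)-u(z,y)|^{p}\,|x-z|^{-(n+sp)}\,dz\,dx\,dy$, which is precisely the first term of $[u]_{\mathcal{W}^{s,p}(\Omega)}^{p}$. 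For $J_{2}$ I would apply the elementary inequality $|a-b|^{p}\le 2^{p-1}(|a|^{p}+|b|^{p})$ and then drop the set constraint, using $A_{y}^{c}\subseteq\{z\in\mathbb{R}^{n}:|z-x|\ge1\}$ together with the polar-coordinates computation $\int_{|z-x|\ge1}|x-z|^{-(n+tp)}\,dz=\omega_{n}\int_{1}^{\infty}r^{-1-tp}\,dr=\omega_{n}/(tp)$ (valid since $tp>0$). This controls the piece carrying $|u(x,y)|^{p}$ by $\tfrac{2^{p-1}\omega_{n}}{tp}\|u\|_{L^{p}(\Omega)}^{p}$. For the piece carrying $|u(z,y)|^{p}$ one exploits that, for fixed $y$, both $x$ and the inner variable range over the same slice $\Omega_{y}$, so after relabelling $x\leftrightarrow z$ and again enlarging the remaining variable's domain to $\{|x-z|\ge1\}$ it is bounded by the same quantity; hence $J_{2}\le\tfrac{2^{p}\omega_{n}}{tp}\|u\|_{L^{p}(\Omega)}^{p}$.

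Putting the two bounds together gives $J\le(\text{first term of }[u]_{\mathcal{W}^{s,p}(\Omega)}^{p})+\tfrac{2^{p}\omega_{n}}{tp}\|u\|_{L^{p}(\Omega)}^{p}$, and the parallel estimate for the $y$-integral gives $(\text{second term})+\tfrac{2^{p}\omega_{m}}{tp}\|u\|_{L^{p}(\Omega)}^{p}$; summing the two produces \eqref{eq:inclu1}. The only step demanding a moment's attention is the relabelling in $J_{2}$: one must check that $x$ and the inner integration variable genuinely play interchangeable roles over the fixed slice $\Omega_{y}$ before swapping them. Everything else is routine. It is worth noting that the blow-up of the constant as $t\to0^{+}$ is unavoidable and has a transparent source, namely the tail integral $\int_{1}^{\infty}r^{-1-tp}\,dr=1/(tp)$ — this is the cost of bounding the lower-order seminorm $[u]_{\mathcal{W}^{t,p}(\Omega)}$ purely in terms of $[u]_{\mathcal{W}^{s,p}(\Omega)}$ and $\|u\|_{L^{p}(\Omega)}$.
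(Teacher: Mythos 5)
Your proposal is correct and follows the paper's proof essentially step for step: split the inner domain at unit distance, use $t<s$ on the near part to compare with the $\mathcal{W}^{s,p}$ seminorm, and use the elementary inequality plus the tail integral $\int_{1}^{\infty}r^{-1-tp}\,dr=1/(tp)$ on the far part. The only difference is that you spell out the $x\leftrightarrow z$ relabelling (which the paper silently uses to pass from $2^{p-1}\int(|u(x,y)|^{p}+|u(z,y)|^{p})$ to $2^{p}\int|u(x,y)|^{p}$), a small clarification rather than a different argument.
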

	 
	 Finally, we prove a Poincar\'e type inequality.
	 
	 \begin{lem}\label{lema:Poicare} 
	 	Let $\Omega$ be an open bounded subset of 
	 	$\mathbb{R}^{n+m},$ $s\in(0,1)$ and $p\in(1,\infty).$
	 	Then there is a positive constant $C$ such that
	 	\[
	 		\|u\|_{\lp}\le C 
	 		[u]_{\mathcal{W}^{\scriptscriptstyle s,p}
	 		(\mathbb{R}^{n+m})}
	 		\quad\forall u\in
	 		\widetilde{\mathcal{W}}^{\scriptscriptstyle s,p}
	 		(\Omega).
	 	\] 
	\end{lem}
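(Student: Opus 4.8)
The plan is to exploit the fact that a function $u\in\widetilde{\mathcal{W}}^{s,p}(\Omega)$ vanishes outside the \emph{bounded} set $\Omega$, so the nonlocal ``tail'' of the Gagliardo-type seminorm already controls the full $L^p$-mass of $u$. Concretely, it suffices to use only the first of the two directional terms in $[u]_{\mathcal{W}^{s,p}(\mathbb{R}^{n+m})}^p$, namely $\int_{\mathbb{R}^{n+m}}\int_{\mathbb{R}^n}\frac{|u(x,y)-u(z,y)|^p}{|x-z|^{n+sp}}\,dz\,dx\,dy$, since the other term is nonnegative and can simply be dropped.

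First I would fix $R>0$ with $\Omega\subseteq B^{n+m}(0,R)$. Since $u\equiv 0$ on $\Omega^c$ we have $\|u\|_{L^p(\Omega)}^p=\int_{\mathbb{R}^{n+m}}|u|^p$, and, for every $y\in\mathbb{R}^m$, $u(z,y)=0$ whenever $|z|\ge R$ (because then $(z,y)\notin B^{n+m}(0,R)\supseteq\Omega$); moreover $(x,y)\in\Omega$ forces $|x|<R$. The key elementary estimate is that for $(x,y)\in\Omega$ and $|z|\ge 2R$ one has $|x-z|\le|x|+|z|\le 2|z|$, hence
\[
\int_{\{|z|\ge 2R\}}\frac{dz}{|x-z|^{n+sp}}\ \ge\ 2^{-(n+sp)}\int_{\{|z|\ge 2R\}}\frac{dz}{|z|^{n+sp}}\ =\ \frac{\omega_n}{sp}\,(2R)^{-sp}\,2^{-(n+sp)}\ =:\ c_1\ >0,
\]
a constant independent of $u$. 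Restricting the integration in the first term of the seminorm to pairs with $(x,y)\in\Omega$ and $|z|\ge 2R$ (on which the integrand equals $|u(x,y)|^p|x-z|^{-(n+sp)}$) and using nonnegativity of everything discarded, I get
\[
[u]_{\mathcal{W}^{s,p}(\mathbb{R}^{n+m})}^p\ \ge\ \int_{\Omega}\int_{\{|z|\ge 2R\}}\frac{|u(x,y)|^p}{|x-z|^{n+sp}}\,dz\,dx\,dy\ \ge\ c_1\,\|u\|_{L^p(\Omega)}^p,
\]
which is the claimed inequality with $C=c_1^{-1/p}$.

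I do not expect any serious obstacle here; the argument is routine. The only points that need a little care are: (i) the elementary geometric inclusions guaranteeing that $u(z,y)=0$ on the region over which $z$ ranges, which use only that $\Omega$ is bounded (no regularity of $\partial\Omega$ is needed, consistent with the hypotheses of the lemma); and (ii) the observation that throwing away one of the two directional terms and all the ``near-diagonal'' contributions is harmless precisely because the integrand is pointwise nonnegative. If a symmetric-looking constant is desired, one can run the identical computation with the $y$-term in place of the $x$-term and average the two bounds, but this is not necessary for the statement.
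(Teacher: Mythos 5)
Your argument is correct and follows essentially the same route as the paper: exploit that $u$ vanishes off the bounded set $\Omega$, drop one directional term and the near-diagonal part, and integrate the remaining kernel $|x-z|^{-(n+sp)}$ over a far region where $u(z,y)=0$ to produce a positive constant times $\|u\|_{L^p(\Omega)}^p$. The paper does this a shade more cleanly by choosing $d=2\operatorname{diam}(\Omega)$ and integrating over $\{z:|x-z|\ge d\}$ (a region centered at $x$), which makes $u(z,y)=0$ there and gives $\int_{|x-z|\ge d}|x-z|^{-(n+sp)}\,dz=\omega_n d^{-sp}/(sp)$ directly, avoiding the extra $2^{-(n+sp)}$ comparison between $|x-z|$ and $|z|$; this is only a cosmetic simplification.
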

	\begin{proof}
		Let 
		$u\in\widetilde{\mathcal{W}}^{\scriptscriptstyle s,p}
		(\Omega)$ and $d=2\diam(\Omega)$. It holds that
		\[
				[u]_{\mathcal{W}^{\scriptscriptstyle s,p}
	 				(\mathbb{R}^{n+m})}^p\ge
	 				\int_{\Omega}|u(x,y)|^p\int_{\mathbb{R}^{n+m}\setminus 	
	 				B^n(x,d)}	 
	 				\dfrac{dz}{|x-z|^{n+sp}}\ge
	 				\dfrac{\omega_n d^{-sp}}{sp}
	 				\|u\|_{\lp}^p.				
		\]
	\end{proof}	 	 
	
\section{The first eigenvalue}\label{autovalor}
\setcounter{equation}{0}

	Under assumptions \rm{A1} and \rm{A2}, a natural definition of an
	eigenvalue is a real value $\lambda$ for which there exists
	$u\in \widetilde{\mathcal{W}}^{s,p}(\Omega)\setminus\{0\}$ such
	that $u$ is a weak solution of
	\begin{equation}\label{eq:EDP}
		\begin{cases}
				\mathcal{L}_{s,p}u(x,y)=\lambda (u(x,y))^{p-1}
				&\text{ in }\Omega,\\	
				u(x,y)=0 &\text{ in }\Omega^c,
		\end{cases}
	\end{equation}
	that is
	\[
		\mathcal{H}_{s,p}(u,v)=\lambda
		\int_{\Omega} (u(x,y))^{p-1}
		v(x,y)\, dxdy\qquad \forall v\in
		\widetilde{\mathcal{W}}^{s,p}(\Omega).
	\]
	The function $u$ is called a corresponding eigenfunction.
	Here
	\begin{align*}
		\mathcal{H}_{s,p}(u,v)\coloneqq&
		\int_{\mathbb{R}^{n+m}}\int_{\mathbb{R}^{n}}
		\dfrac{(u(x,y)-u(z,y))^{p-1}
		(v(x,y)-v(z,y))}{|x-z|^{n+sp}}
		dzdxdy\\
		&+\int_{\mathbb{R}^{n+m}}\int_{\mathbb{R}^m}
		\dfrac{(u(x,y)-u(x,w))^{p-1}
		(v(x,y)-v(x,w))}{|y-w|^{m+sp}}dwdxdy.
	\end{align*}
	
	Observe that
	\begin{equation}\label{eq:DP1}
		\mathcal{H}_{s,p}(u,u)= [u]_{\mathcal{W}^{s,p}(
		\mathbb{R}^{n+m})}^p\qquad
		 \forall u\in \mathcal{W}^{s,p}(\mathbb{R}^{n+m}),
	\end{equation}
	and, by H\"older's inequality,
	\begin{equation}\label{eq:DP2}
		\mathcal{H}_{s,p}(u,v)\le 2[u]_{\mathcal{W}^{s,p}(
		\mathbb{R}^{n+m})}^{p-1}
		[v]_{\mathcal{W}^{s,p}(
		\mathbb{R}^{n+m})}
		\qquad \forall u,v\in
		\mathcal{W}^{s,p}(\mathbb{R}^{n+m}).
	\end{equation}

	Observe that, when $\lambda$ is an eigenvalue, then there is
    $u\in \widetilde{\mathcal{W}}^{s,p}(\Omega)\setminus\{0\}$
    such that
    \[
		\mathcal{H}_{s,p}(u,u)=\lambda \int_{\Omega}|u(x,y)|^{p}dxdy.
	\]
    Then, we have
    that
    \begin{equation}\label{eq:autcarac}
		\lambda =\dfrac{[u]_{\mathcal{W}^{s,p}(
		\mathbb{R}^{n+m})}^p}{\|u\|_{L^p(\Omega)}^p}\ge 0.
    \end{equation}
	
	By a standard compactness argument,
	we have the following result.
	
	\begin{teo}\label{teo:1erAut}
		Under the assumptions {\rm{A1}} and
		{\rm{A2}}, the first eigenvalue is given by
       \begin{equation}\label{eq:autovalor}
			\lambda_1(s,p)\coloneqq\inf
                 \left\{
                     \dfrac{[u]_{\mathcal{W}^{s,p}(
						\mathbb{R}^{n+m})}^p}{\|u\|_{\lpp}^p}	
						\colon
                      	u\in\widetilde{\mathcal{W}}^{s,p}(\Omega), 
                      	u\not\equiv0
                 \right\}.
		\end{equation}
	\end{teo}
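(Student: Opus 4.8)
The plan is to establish the variational characterization \eqref{eq:autovalor} in four moves: (i) show that the infimum $\lambda_1(s,p)$ is strictly positive; (ii) show that it is attained; (iii) show that any minimizer is an eigenfunction with associated eigenvalue $\lambda_1(s,p)$; and (iv) show that every eigenvalue is at least $\lambda_1(s,p)$, so that $\lambda_1(s,p)$ is indeed the smallest one. For (i) I would simply note that the Rayleigh quotient in \eqref{eq:autovalor} is nonnegative and that, by the Poincar\'e inequality of Lemma~\ref{lema:Poicare}, one has $\|u\|_{\lp}^p\le C^p[u]_{\wwspr}^p$ for every $u\in\widetilde{\mathcal{W}}^{s,p}(\Omega)$; hence the quotient is bounded below by $C^{-p}>0$ and $\lambda_1(s,p)\ge C^{-p}>0$.

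For (ii) I take a minimizing sequence $(u_k)_k$ and, rescaling, assume $\|u_k\|_{\lp}=1$, so that $[u_k]_{\wwspr}^p\to\lambda_1(s,p)$ and $(u_k)_k$ is bounded in $\widetilde{\mathcal{W}}^{s,p}(\Omega)$; this space is reflexive for $1<p<\infty$, being a closed subspace of the reflexive space $\wwspr$, so a subsequence converges weakly to some $u$. To upgrade to strong convergence in $L^p$ I would enclose $\Omega$ in a product cube $Q=Q_1\times Q_2$ with $Q_1\subset\mathbb{R}^n$, $Q_2\subset\mathbb{R}^m$, apply Lemma~\ref{lema:contincl} on $Q$ to the zero-extensions of the $u_k$ (using $[\,\cdot\,]_{\mathcal{W}^{s,p}(Q)}\le[\,\cdot\,]_{\wwspr}$) to get boundedness in $W^{s,p}(Q)$, and then invoke the compact embedding $W^{s,p}(Q)\hookrightarrow\hookrightarrow L^p(Q)$ of Theorem~\ref{teo:compacemb}. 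Along a further subsequence $u_k\to u$ in $L^p(\Omega)$, so $\|u\|_{\lp}=1$ and, since each $u_k$ vanishes outside $\Omega$, so does $u$; thus $u$ is admissible in \eqref{eq:autovalor}. Weak lower semicontinuity of the convex, continuous functional $w\mapsto[w]_{\wwspr}^p$ then gives $[u]_{\wwspr}^p\le\liminf_k[u_k]_{\wwspr}^p=\lambda_1(s,p)$, while admissibility forces the reverse inequality; hence $[u]_{\wwspr}^p=\lambda_1(s,p)$ and $u$ is a minimizer.

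For (iii) I would use that, since $u$ realises the infimum, the function $t\mapsto[u+t\varphi]_{\wwspr}^p-\lambda_1(s,p)\|u+t\varphi\|_{\lp}^p$ is nonnegative for all $t$ and vanishes at $t=0$, so its derivative at $t=0$ is zero. Differentiating the two terms under the integral sign — the $L^p$ term is classical, and for $[u+t\varphi]_{\wwspr}^p$ one differentiates the integrand and dominates the difference quotients for $|t|\le1$ via $|a+b|^{p-1}\le 2^{p-1}(|a|^{p-1}+|b|^{p-1})$, the resulting bound being integrable by H\"older exactly as in \eqref{eq:DP2} — yields $\mathcal{H}_{s,p}(u,\varphi)=\lambda_1(s,p)\int_\Omega (u)^{p-1}\varphi\,dxdy$ for every $\varphi\in\widetilde{\mathcal{W}}^{s,p}(\Omega)$, i.e.\ $u$ is a weak solution of \eqref{eq:EDP} with $\lambda=\lambda_1(s,p)$, so $\lambda_1(s,p)$ is an eigenvalue. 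For (iv), if $\lambda$ is any eigenvalue with eigenfunction $v$, then \eqref{eq:autcarac} expresses $\lambda$ as the Rayleigh quotient of $v$, whence $\lambda\ge\lambda_1(s,p)$; combined with (iii) this shows $\lambda_1(s,p)$ is the first eigenvalue.

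The main obstacle is the compactness in step (ii): because the seminorm defining $\widetilde{\mathcal{W}}^{s,p}(\Omega)$ only controls one-variable differences, the standard compact fractional Sobolev embedding of Theorem~\ref{teo:compacemb} is not directly available, and one must route through Lemma~\ref{lema:contincl} — which requires a product domain, hence the auxiliary cube $Q$ — before applying it. The lower semicontinuity and the Euler--Lagrange computation are then routine, the only delicate point being the dominated-convergence justification for differentiating $[\,\cdot\,]_{\wwspr}^p$ along $u+t\varphi$.
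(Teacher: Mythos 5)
Your proposal is correct and follows essentially the same route as the paper: normalize a minimizing sequence, extract a weak limit in $\widetilde{\mathcal{W}}^{s,p}(\Omega)$, upgrade to strong $L^p$ convergence via Lemma~\ref{lema:contincl} together with Theorem~\ref{teo:compacemb}, conclude by weak lower semicontinuity that the limit is a minimizer, and finish by observing via \eqref{eq:autcarac} that every eigenvalue dominates $\lambda_1(s,p)$. The paper is terser: it delegates the Euler--Lagrange step to the arguments of \cite{LL} rather than carrying out the $t$-derivative explicitly as you do, and it omits the positivity observation (which, as you note, follows directly from Lemma~\ref{lema:Poicare}). The one point where you add genuine value is your remark that Lemma~\ref{lema:contincl} as stated requires a product domain, so one must first pass to a bounding cube $Q=Q_1\times Q_2$ before invoking the compact embedding of Theorem~\ref{teo:compacemb}; the paper elides this, and your cube argument is the correct way to fill that small gap while staying within the paper's framework.
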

	
	\begin{proof} 
		Consider a minimizing sequence $u_n$ normalized according to 
		$\|u_n\|_{L^p(\Omega)}=1$. Then, as $u_n$ in bounded in 
		$\widetilde{\mathcal{W}}^{s,p}(\Omega),$ by Lemma \ref{lema:contincl}
		and Theorem \ref{teo:compacemb}, there is a subsequence 
		such that 
		$u_{n_j} \rightharpoonup u$ weakly 
		in $\widetilde{\mathcal{W}}^{s,p}(\Omega)$ 
		and $u_{n_j} \to u$ strongly in $L^p(\Omega)$. 
		Therefore, $u$ is a nontrivial minimizer to 
		the variational problem defining $\lambda_1(s,p)$. The fact that this
		minimizer is a weak solution to \eqref{eq:EDP} is straightforward and can 
		be obtained from the arguments in \cite{LL}.

		To finish the proof we just observe that any other eigenfunction associated 
		with an eigenvalue $\lambda$ verifies 
		$$
			\lambda =
			\dfrac{[u]_{\mathcal{W}^{s,p}(
			\mathbb{R}^{n+m})}^p}{\|u\|_{L^p(\Omega)}^p}	\geq \lambda_1(s,p),
		$$
		and then we get that $\lambda_1(s,p)$ is the first eigenvalue.
		\end{proof}

		Now we observe that using a topological tool (the genus) we can construct
		an unbounded sequence of eigenvalues.

		\begin{teo} \label{teo2}
			Assume {\rm{A1}} and {\rm{A2}}. 
			There is a sequence of eigenvalues $\lambda_n$ such that
			$\lambda_n\to+\infty$ as $n\to+\infty$.
		\end{teo}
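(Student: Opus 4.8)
The plan is to obtain an infinite sequence of eigenvalues via the standard Lusternik--Schnirelmann / Krasnoselskii genus minimax procedure, applied to the even functionals associated with the variational formulation of \eqref{eq:EDP}. I would work on the Banach manifold
\[
    \mathcal{M}\coloneqq\left\{u\in\widetilde{\mathcal{W}}^{s,p}(\Omega)\colon \|u\|_{L^p(\Omega)}^p=1\right\},
\]
which is the level set of the $C^1$ functional $J(u)\coloneqq\|u\|_{L^p(\Omega)}^p$, and consider the energy $\Phi(u)\coloneqq[u]_{\mathcal{W}^{s,p}(\mathbb{R}^{n+m})}^p$ restricted to $\mathcal{M}$. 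Both $\Phi$ and $J$ are even, and by \eqref{eq:DP1}--\eqref{eq:DP2} together with the convexity of $v\mapsto[v]^p$ they are $C^1$ on $\widetilde{\mathcal{W}}^{s,p}(\Omega)$; a constrained critical point $u\in\mathcal{M}$ of $\Phi$ satisfies, by the Lagrange multiplier rule, $\mathcal{H}_{s,p}(u,v)=\lambda\int_\Omega (u)^{p-1}v\,dxdy$ for all test functions, i.e. it is exactly an eigenfunction with eigenvalue $\lambda=\Phi(u)$ in the sense of \eqref{eq:EDP}.

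\textbf{Key steps.} First I would verify that $\Phi$ restricted to $\mathcal{M}$ satisfies the Palais--Smale condition. This is where the compactness enters: given a Palais--Smale sequence $u_n\in\mathcal{M}$ with $\Phi(u_n)$ bounded, the Poincar\'e inequality of Lemma~\ref{lema:Poicare} gives that $u_n$ is bounded in $\widetilde{\mathcal{W}}^{s,p}(\Omega)$; by reflexivity (the Proposition in Section~\ref{Prel}) pass to a weakly convergent subsequence $u_n\rightharpoonup u$, and by Lemma~\ref{lema:contincl} together with the compact embedding in Theorem~\ref{teo:compacemb} we get $u_n\to u$ strongly in $L^p(\Omega)$, so $u\in\mathcal{M}$. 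Then the usual $(S_+)$-type argument for the $p$-Laplacian-like structure — writing out $\mathcal{H}_{s,p}(u_n,u_n-u)\to 0$ and using the strict monotonicity of $a\mapsto(a)^{p-1}$ — upgrades weak to strong convergence in $\widetilde{\mathcal{W}}^{s,p}(\Omega)$. Second, for each $k\in\mathbb{N}$ set
\[
    \lambda_k(s,p)\coloneqq\inf_{C\in\Sigma_k}\ \sup_{u\in C}\Phi(u),
\]
where $\Sigma_k\coloneqq\{C\subset\mathcal{M}\colon C\text{ compact, symmetric, } \gamma(C)\ge k\}$ and $\gamma$ is the Krasnoselskii genus. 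Since $\widetilde{\mathcal{W}}^{s,p}(\Omega)$ is infinite-dimensional (it contains, e.g., restrictions of smooth compactly supported functions, which give arbitrarily large-genus subsets of $\mathcal{M}$), each $\Sigma_k$ is nonempty, so the $\lambda_k$ are well defined, finite, and nondecreasing in $k$. The standard deformation lemma on the $C^1$ manifold $\mathcal{M}$, valid because of the Palais--Smale condition just established, yields that each $\lambda_k$ is a critical value of $\Phi|_{\mathcal{M}}$, hence an eigenvalue.

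\textbf{Divergence of the sequence.} Finally I would show $\lambda_k\to+\infty$. Suppose not: then $\lambda_k\le M$ for all $k$, so there are symmetric compact sets $C_k$ with $\gamma(C_k)\ge k$ contained in the sublevel set $\Phi^{M}\coloneqq\{u\in\mathcal{M}\colon\Phi(u)\le M\}$. By Lemma~\ref{lema:Poicare} and Lemma~\ref{lema:contincl}, $\Phi^{M}$ is bounded in $\widetilde{\mathcal{W}}^{s,p}(\Omega)$ and, via the compact embedding of Theorem~\ref{teo:compacemb}, precompact in $L^p(\Omega)$; a precompact symmetric set that avoids $0$ (which $\mathcal{M}$ does, since $\|u\|_{L^p(\Omega)}=1$) has finite genus, contradicting $\gamma(C_k)\ge k$ for all $k$. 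Hence $\lambda_k\to+\infty$, which proves the theorem. The main obstacle I anticipate is the clean verification of the Palais--Smale condition, specifically promoting weak to strong convergence in the anisotropic space $\widetilde{\mathcal{W}}^{s,p}(\Omega)$ using the monotonicity structure of $\mathcal{H}_{s,p}$; all the topological bookkeeping with the genus is then routine, and several ingredients (compactness, Poincar\'e) are already in hand from Section~\ref{Prel}.
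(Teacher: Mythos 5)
Your proof is correct and reaches the same conclusion, but it is a genuinely different (dual) minimax formulation from what the paper does. The paper, following Garc\'ia-Azorero--Peral, constrains the seminorm to the level set $M_\alpha=\{[u]_{\mathcal{W}^{s,p}(\mathbb{R}^{n+m})}=p\alpha\}$ and forms $\beta_k=\sup_{C\in C_k}\min_{u\in C}\varphi(u)$ with $\varphi(u)=\tfrac1p\|u\|_{L^p}^p$, then sets $\lambda_k=\alpha/\beta_k$ (the paper writes the genus constraint as $\gamma(C)\le k$, which is almost certainly a typo for $\gamma(C)\ge k$, since otherwise $\lambda_k$ would be nonincreasing). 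You instead fix the $L^p$-sphere $\mathcal{M}$ and form $\lambda_k=\inf_{C\in\Sigma_k}\sup_{u\in C}\Phi(u)$ with $\Phi(u)=[u]^p$. These are the two standard, equivalent faces of the Ljusternik--Schnirelmann scheme; each produces an unbounded sequence of eigenvalues, which is all the theorem asserts. Your version has the advantage of giving a very clean divergence argument: if $\lambda_k\le M$ for all $k$, choose $C_k\in\Sigma_k$ with $C_k\subset\Phi^{M+1}$, note (via Lemma~\ref{lema:Poicare}, Lemma~\ref{lema:contincl}, and Theorem~\ref{teo:compacemb}) that the $L^p$-closure $\overline{\Phi^{M+1}}$ is a compact symmetric subset of the $L^p$ unit sphere not containing $0$, hence of finite genus, contradicting $\gamma(C_k)\ge k$. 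One small point to make explicit: since each $C_k$ is compact in $\widetilde{\mathcal{W}}^{s,p}(\Omega)$, the continuous inclusion into $L^p(\Omega)$ is a homeomorphism onto its image, so the genus computed in either topology agrees, and monotonicity under inclusion gives $\gamma(C_k)\le\gamma\bigl(\overline{\Phi^{M+1}}\bigr)<\infty$. Your sketch of the Palais--Smale verification (Poincar\'e for boundedness, reflexivity for a weak limit, compact embedding for strong $L^p$ convergence, then the $(S_+)$/monotonicity upgrade to strong $\widetilde{\mathcal{W}}^{s,p}$ convergence) is the right route and fills in material the paper leaves to the citation of \cite{GAP}.
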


		\begin{proof} 
			We follow ideas from \cite{GAP} and hence we omit the details.
			Let us consider 
			\[
				M_\alpha = \{u \in \widetilde{\mathcal{W}}^{s,p}(
				\Omega)\colon  [u]_{\mathcal{W}^{s,p}(
				\mathbb{R}^{n+m}) }= p \alpha \}
			\] 
			and
			\[ 
				\varphi (u) = \frac{1}{p}
					\int_{\Omega} |u(x,y)|^p\, dxdy.
			\]
			We are looking for critical points
			of $\varphi$ restricted to the manifold $M_\alpha$ using a minimax
			technique.
			We consider the class
			\[
  				\Sigma = \{A\subset \widetilde{\mathcal{W}}^{s,p}(
				\Omega)\setminus\{0\}
				\colon A \mbox{ is closed, } A=-A\}.
  			\]
			Over this class we define the genus, 
			$\gamma\colon\Sigma\to {\mathbb{N}}\cup\{\infty\}$, as 
			\[
				\gamma(A) = \min\{k\in {\mathbb{N}}\colon
				\mbox{there exists } \phi\in C(A,{{\mathbb{R}}}^k-\{0\}),
				\ \phi(x)=-\phi(-x)\}.
			\]
			Now, we let $C_k = \{ C \subset M_\alpha \colon C 
			\mbox{ is compact, symmetric and } \gamma ( C) \le k \} $ 
			and let
			\begin{equation}
					\label{betak} \beta_k 
					= \sup_{C \in C_k} \min_{u \in C} \varphi(u). 
			\end{equation}
			Then $\beta_k >0$ and there exists $u_k \in
			M_\alpha$ such that $\varphi (u_k) = \beta_k$ and $ u_k$ is a weak
			eigenfuction with $\lambda_k = \alpha / \beta_k $.
		\end{proof}

	The following lemma shows that the eigenfunctions are bounded.
	
	\begin{lem}\label{lema:cotainfty}
		Under assumptions {\rm{A1}} and {\rm{A2}},  if $u$ is an eigenfunction
		associated to some eigenvalue $\lambda$ then
		$u\in L^{\infty}(\mathbb{R}^{n+m}).$
	\end{lem}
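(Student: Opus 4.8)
The plan is a De~Giorgi truncation (level-set) argument. Three preliminary reductions. First, $u\in\widetilde{\mathcal{W}}^{s,p}(\Omega)$ implies, via Lemma~\ref{lema:contincl} applied to the product $\mathbb{R}^{n+m}=\mathbb{R}^n\times\mathbb{R}^m$, that $u\in W^{s,p}(\Omega)$; hence if $sp>n+m$ Theorem~\ref{teo:compacemb} already gives $u\in C^{0,\lambda}(\overline{\Omega})$, and we may assume $sp\le n+m$. Second, since $\mathcal{L}_{s,p}$ is odd, $-u$ is again an eigenfunction for the same $\lambda$, so it suffices to produce $M>0$ with $u\le M$ a.e.\ in $\Omega$ and then apply the conclusion to $-u$. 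Third, combining Lemma~\ref{lema:contincl}, Lemma~\ref{lema:Poicare} and Theorem~\ref{teo:compacemb} we may fix an exponent $q>p$ and a constant $C_\star$ with
\[
\|w\|_{L^q(\Omega)}^p\le C_\star\,[w]_{\wwspr}^p\qquad\text{for all }w\in\widetilde{\mathcal{W}}^{s,p}(\Omega).
\]

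Next I would set up the truncations. For $k>0$ let $w=(u-k)_+$; it is supported in $\{u>k\}\subseteq\Omega$ and, being a $1$-Lipschitz function of $u$ that vanishes where $u$ does, belongs to $\widetilde{\mathcal{W}}^{s,p}(\Omega)$ (same argument as Lemma~\ref{lema:aux1}). The elementary inequality $0\le (a-k)_+-(b-k)_+\le a-b$ valid for $a\ge b$ gives, with $(t)^{p-1}=|t|^{p-2}t$,
\[
(a-b)^{p-1}\bigl((a-k)_+-(b-k)_+\bigr)\ge\bigl|(a-k)_+-(b-k)_+\bigr|^p ,
\]
and applying this to each of the two difference quotients in the definition of $\mathcal{H}_{s,p}$ yields $\mathcal{H}_{s,p}(u,w)\ge[w]_{\wwspr}^p$. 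Using $w$ as a test function in the weak eigenvalue equation, and noting $w\equiv0$ on $\{u\le0\}$, we obtain
\[
[w]_{\wwspr}^p\le\mathcal{H}_{s,p}(u,w)=\lambda\int_{\{u>k\}}u^{p-1}(u-k)\,dxdy ,
\]
hence, combining with the embedding estimate above, $\|w\|_{L^q(\Omega)}^p\le C_\star\lambda\int_{\{u>k\}}u^{p-1}(u-k)\,dxdy$.

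Now comes the iteration. Put $k_j=M(1-2^{-j})\uparrow M$, $w_j=(u-k_j)_+$ and $Y_j=\|w_j\|_{\lpp}^p$, so $Y_0\le\|u\|_{\lpp}^p$. On $\{u>k_j\}$ one has $u-k_{j-1}>M2^{-j}$, whence $u\le 2^{j+1}(u-k_{j-1})$ and, by Chebyshev, $|\{u>k_j\}|\le 2^{jp}M^{-p}Y_{j-1}$; also $w_j\le w_{j-1}$ and $\{u>k_j\}\subseteq\{u>k_{j-1}\}$. Plugging these into the estimate above together with Hölder's inequality in the form $Y_j\le\|w_j\|_{L^q(\Omega)}^p\,|\{u>k_j\}|^{\delta}$, $\delta=1-p/q>0$, gives a recursion
\[
Y_j\le C_0\,b^{\,j}\,Y_{j-1}^{\,1+\delta},\qquad C_0=C_1\,\lambda\,M^{-p\delta},
\]
with $b=b(p,\delta)>1$ and $C_1=C_1(n,m,s,p)$. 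By the standard fast-geometric-convergence lemma, $Y_j\to0$ provided $Y_0\le C_0^{-1/\delta}b^{-1/\delta^2}$; since $C_0^{-1/\delta}=(C_1\lambda)^{-1/\delta}M^{p}$, this holds as soon as $M=C_2(n,m,s,p,\lambda)\,\|u\|_{\lpp}$ with $C_2$ large enough. Then $Y_j\to0$, i.e.\ $(u-M)_+=0$ a.e., so $u\le M$ a.e.\ in $\Omega$ (and $u\equiv0$ on $\Omega^c$); applying this to $-u$ we conclude $\|u\|_{L^\infty(\mathbb{R}^{n+m})}\le M$.

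The main obstacle is the bookkeeping of the iteration: establishing the truncation inequality so that $\mathcal{H}_{s,p}(u,w)\ge[w]_{\wwspr}^p$ (this is where the anisotropic structure of $\mathcal{L}_{s,p}$ must be used, handling the $x$- and $y$-increments quotient by quotient), and then tracking precisely the powers of $2^{\,j}$ and of $M$ produced by the embedding and Chebyshev steps so that the superlinear exponent $1+\delta>1$ emerges and the smallness threshold for $Y_0$ can be met by a choice of $M$ proportional to $\|u\|_{\lpp}$, rather than one depending on the (a priori unknown) size of $u$ itself.
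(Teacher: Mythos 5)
Your proposal is correct and follows essentially the same strategy as the paper: a De Giorgi truncation iteration for $(u-k)_+$ using the anisotropic Caccioppoli-type estimate $\mathcal H_{s,p}(u,(u-k)_+)\ge[(u-k)_+]^p_{\wwspr}$, the Sobolev embedding via Lemmas~\ref{lema:contincl}--\ref{lema:Poicare} and Theorem~\ref{teo:compacemb}, Chebyshev, and a fast-geometric-convergence recursion, with the same preliminary reduction to $sp\le n+m$. The only cosmetic difference is bookkeeping: the paper iterates with levels $k_j=1-2^{-j}$ under the smallness normalization $\|u_+\|_{L^p}\le\delta$ and then invokes the homogeneity of the eigenvalue problem at the end, whereas you absorb that scaling directly into the level choice $k_j=M(1-2^{-j})$ and pick $M\propto\|u\|_{L^p(\Omega)}$; both are valid and yield the same bound.
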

	\begin{proof}
		In this proof we follow ideas form \cite{FP}.
		
		If $ps>n+m,$ by Lemma \ref{lema:contincl} and
		Theorem \ref{teo:compacemb}, then the assertion holds.
		From now on, we suppose that $sp\le n+m.$
		
		We will show that if $\|u_+\|_{L^{p}(\Omega)}\le \delta$
		then $u_+$ is bounded, where $\delta>0$ is some small constant to 
		be determined.
		Let $k\in\mathbb{N}_0,$ we define the function $u_k$ by
		\[
			u_k(x,y)\coloneqq(u(x,y)-1+2^{-k})_+.
		\]
		Observe that, $u_0= u_+$ and for any $k\in\mathbb{N}_0$ we have that
		$u_k\in \widetilde{W}^{s,p}(\Omega)$ verifies
		\begin{equation}\label{eq:lci1}
		\begin{aligned}
			&u_{k+1}\le u_k \text{ a.e. } \mathbb{R}^{n+m},\\
			&u <(2^{k+1}-1)u_k\text{ in } \{u_{k+1}>0\},\\
			&\{u_{k+1}>0\}\subset\{u_k>2^{-(k+1)}\}.
		\end{aligned}
		\end{equation}
		
		Now, for any function $v\colon\mathbb{R}^{n+m}
		\to \mathbb{R}$, it holds that
		\[
			|v_+(x,y)-v_+(z,w)|^p\le |v(x,y)-v(z,w)|^{p-1}(v_+(x,y)-v_+(x,w))
		\]
		for all $(x,y),(z,w)\in\mathbb{R}^{n+m}.$
		Then
		\[
			[u_{k+1}]_{\mathcal{W}^{s,p}(\mathbb{R}^{n+m})}^p\le
			\mathcal{H}_{s,p}(u,u_{k+1})=\lambda\int_{\Omega}
			(u(x,y))^{p-1} u_{k+1}(x,y) \, dxdy
		\]
		for all $k\in\mathbb{N}_0.$
		Hence, by \eqref{eq:lci1} and H\"older's inequality, we get
		\begin{equation}\label{eq:lci2}
		\begin{aligned}
			[u_{k+1}]_{\mathcal{W}^{s,p}(\mathbb{R}^{n+m})}^p
			&\le \lambda\int_{\Omega} (u(x,y))^{p-1} u_{k+1}(x,y) \, dxdy\\
			&\le (2^{k+1}-1)^{p-1}\lambda\|u_k\|_{L^p(\Omega)}^p\\
		\end{aligned}
		\end{equation}
		for all $k\in\mathbb{N}_0.$

		On the other hand, in the case $sp<n+m,$ using H\"older's inequality,
		 Lemma \ref{lema:contincl} and
		Theorem \ref{teo:compacemb}, the formulas in \eqref{eq:lci1}, and
		Chebyshev's inequality, for any $k\in\mathbb{N}_0$ we have that
		\begin{equation}\label{eq:lci3}
			\begin{aligned}
				\|u_{k+1}\|_{L^{p}(\Omega)}^p&\le
				\|u_{k+1}\|_{L^{p_s^\star}(\Omega)}^p
				|\{u_{k+1}>0\}|^{\nicefrac{sp}{(n+m)}}\\
				&\le C[u_{k+1}]_{\mathcal{W}^{s,p}(\mathbb{R}^{n+m})}^p
				|\{u_{k}>2^{-(k+1)}\}|^{\nicefrac{sp}{(n+m)}}\\
				&\le C[u_{k+1}]_{\mathcal{W}^{s,p}(\mathbb{R}^{n+m})}^p
				\left(2^{(k+1)p}\|u_k\|^p_{L^{p}(\Omega)}
				\right)^{\nicefrac{sp}{(n+m)}},
			\end{aligned}
		\end{equation}
		where $C$ is a constant independent of $k.$
		Then, by \eqref{eq:lci2} and \eqref{eq:lci3}, for any
		$k\in\mathbb{N}_0$ we obtain
		\begin{equation}\label{eq:lci4}
		\begin{aligned}
				\|u_{k+1}\|_{L^{p}(\Omega)}^p
				&\le C
				\left(2^{(k+1)p}\|u_k\|^p_{L^{p}(\Omega)}
				\right)^{1+\alpha},
		\end{aligned}
		\end{equation}
		where $C$ is a constant independent of $k$ and
		$\alpha=\nicefrac{sp}{(n+m)}>0.$

		Arguing similarly, in the case $sp=n+m,$ taking $r>p$ and
		proceeding as in the previous case, $sp<n+m$ (with $r$ in place of
		$p_s^\star$), we obtain that \eqref{eq:lci4} holds with
		$\alpha=1-\nicefrac{p}r>0.$

		Therefore, if $sp\le n+m$, there exist $\alpha>0$ and
		a constant $C>1$  such that
		\begin{equation}\label{eq:lci5}
				\|u_{k+1}\|_{L^{p}(\Omega)}^p
				\le C^k \left(\|u_k\|^p_{L^{p}(\Omega)}
				\right)^{1+\alpha},
		\end{equation}
		for any $k\in\mathbb{N}_0.$
		Hence, if $\|u_0\|_{L^{p}(\Omega)}^p
		=\|u_+\|_{L^{p}(\Omega)}^p\le C^{\nicefrac{-1}{\alpha^2}}
		\eqqcolon\delta^p$ then $u_{k}\to0$ strongly in
		$L^{p}(\Omega).$
		But $u_k\to(u-1)_+$ a.e in $\mathbb{R}^{n+m},$ then we conclude that
		$(u-1)_+\equiv0$ in $\mathbb{R}^{n+m}.$
		Therefore, $u_+$ is bounded.
		
		Taking $-u$ in place of $u$ we have
		that $u_-$ is bounded if
		$\|u_-\|_{L^{p}(\Omega)}<\delta.$ 
		
		Hence, as we can multiply an eigenfunction $u$ by a small constant in order to obtain
		$\|u_+\|_{L^{p}(\Omega)}$ and $\|u_-\|_{L^{p}(\Omega)} <\delta$, 
		we conclude that
		$u$ is bounded.
	\end{proof}

   Our next goal is to show that if $u$ is a eigenfunction associated with
   $\lambda_{1}(s,p)$ then $u$ does not change sign. Before showing this
   result we need the following two technical lemmas.

   \begin{lem}\label{lema:DKP}
   		Assume {\rm{A1}} and {\rm{A2}}.
   		If $u\in \widetilde{\mathcal{W}}^{s,p}
   		(\Omega)$ is such that
		\begin{equation}\label{eq:supersol}
			\mathcal{H}_{s,p}(u,v)\ge0\quad\forall v\in
   			\widetilde{\mathcal{W}}^{s,p}(\Omega), v\ge0 \text{ in } \Omega.
		\end{equation}
   		and
   		$u\ge0$ in $B^n(x_0,R)\times B^m(y_0,R)\subset\subset \Omega$
   		for some $R>0$ then for any $d>0$ and $0<2r<R$ there holds
   		\begin{equation}\label{eq:DKP}
			\begin{aligned}
				& \int_{B_r^m}\int_{B_r^n}\int_{B_r^n}
   				\dfrac{1}{|x-z|^{n+sp}}\left|\log
   				\left(\dfrac{u(x,y)+d}{u(z,y)+d}
   				\right)\right|^pdzdxdy\\&
   				+\int_{B_r^n}\int_{B_r^m}\int_{B_r^n}
   				\dfrac{1}{|y-w|^{m+sp}}\left|\log
   				\left(\dfrac{u(x,y)+d}{u(x,w)+d}
   				\right)\right|^pdwdxdy\\
   				& \qquad \qquad \le Cr^{n+m-sp} \Biggl\{
   				\dfrac{r^{sp}}{d^{p-1}r^m}
   				\int_{\mathbb{R}^m}\int_{(B^n_R)^c}
   				\dfrac{u_{-}(x,y)^p}{|x-x_0|^{n+sp}}dxdy\\& \qquad\qquad\qquad
				\qquad 
				+
   				\dfrac{r^{sp}}{d^{p-1}r^n}
   				\int_{\mathbb{R}^n}\int_{(B^m_R)^c}
   				\dfrac{u_{-}(x,y)^p}{|y-y_0|^{m+sp}}dydx +1
   				\Biggl\}
			\end{aligned}
		\end{equation}
		where $B^n_{\rho}=B^n(x_0,\rho),$  $B^m_{\rho}=B^m(y_0,\rho)$ and $C=C(n,m,p,s)>0$
		is a constant.
	\end{lem}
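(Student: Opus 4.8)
The plan is to carry over the logarithmic Caccioppoli estimate of Di Castro--Kuusi--Palatucci \cite{DKP} for the fractional $p$-Laplacian, exploiting that $\mathcal{H}_{s,p}=\mathcal{H}^x+\mathcal{H}^y$ splits into a part $\mathcal{H}^x$ acting only on the $x$ variables and a part $\mathcal{H}^y$ acting only on the $y$ variables, and that a product cut-off makes each part decouple, slice by slice, into the one-group-of-variables situation treated in \cite{DKP}.

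First I would fix $d>0$ and $0<2r<R$, choose $\eta_1\in C_c^\infty(B^n(x_0,\tfrac32 r))$ and $\eta_2\in C_c^\infty(B^m(y_0,\tfrac32 r))$ with $0\le\eta_i\le1$, $\eta_1\equiv1$ on $B^n(x_0,r)$, $\eta_2\equiv1$ on $B^m(y_0,r)$, $|\nabla\eta_1|,|\nabla\eta_2|\le C/r$, set $\eta(x,y)=\eta_1(x)\eta_2(y)$, and test \eqref{eq:supersol} with $v=\eta^p\,(u_++d)^{1-p}$. Since $\supp\eta\subset B^n(x_0,\tfrac32 r)\times B^m(y_0,\tfrac32 r)\subset\subset B^n(x_0,R)\times B^m(y_0,R)\subset\subset\Omega$, we have $u\ge0$ on $\supp\eta$, so $v=\eta^p(u+d)^{1-p}\le d^{1-p}$ there; moreover $v\ge0$, $v\equiv0$ outside $\Omega$, and $v\in\widetilde{\mathcal{W}}^{s,p}(\Omega)$, since it is the product of the smooth compactly supported $\eta^p$ with $(u_++d)^{1-p}$, which lies in $\mathcal{W}^{s,p}(\Omega)$ (here $u_+\in\mathcal{W}^{s,p}(\Omega)$ by Lemma \ref{lema:aux1} and $t\mapsto(t+d)^{1-p}$ is Lipschitz on $[0,\infty)$). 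Hence $0\le\mathcal{H}_{s,p}(u,v)=\mathcal{H}^x+\mathcal{H}^y$.

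Next I would treat $\mathcal{H}^x$ (the piece $\mathcal{H}^y$ being identical after interchanging $x\leftrightarrow y$ and $n\leftrightarrow m$). Since $v(x,y)-v(z,y)=\eta_2(y)^p\big[\eta_1(x)^p(u(x,y)+d)^{1-p}-\eta_1(z)^p(u(z,y)+d)^{1-p}\big]$, the weight $\eta_2(y)^p$ factors out of the inner $x$-integral, and $\mathcal{H}^x=\int_{\mathbb{R}^m}\eta_2(y)^p G(y)\,dy$, where for fixed $y$ the quantity $G(y)$ is exactly the one-variable fractional $p$-Laplacian bilinear form of $u(\cdot,y)$ tested against $\eta_1^p(u(\cdot,y)+d)^{1-p}$; the integrand vanishes unless $y\in\supp\eta_2\subset B^m(y_0,R)$, so for the relevant $y$ one has $u\ge0$ on $B^n(x_0,2r)\times\{y\}$. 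On the diagonal block $x,z\in B^n(x_0,2r)$ I would use the elementary inequality of \cite{DKP}: there are $c_1,c_2>0$ depending only on $p$ such that for all $a,b>0$ and $\tau_1,\tau_2\ge0$,
\[
(a-b)^{p-1}\big(\tau_1^p a^{1-p}-\tau_2^p b^{1-p}\big)\le c_2\,|\tau_1-\tau_2|^p-c_1\,\max\{\tau_1,\tau_2\}^p\,\Big|\log\tfrac ab\Big|^p,
\]
applied with $a=u(x,y)+d$, $b=u(z,y)+d$, $\tau_1=\eta_1(x)$, $\tau_2=\eta_1(z)$. On $B^n(x_0,r)\times B^n(x_0,r)$ the coefficient $\max\{\eta_1(x),\eta_1(z)\}^p\equiv1$, so the $-c_1$-term produces the first integrand on the left of \eqref{eq:DKP} after multiplying by $\eta_2(y)^p$ (which dominates $\mathbf 1_{B^m(y_0,r)}$) and integrating in $y$; the $c_2|\eta_1(x)-\eta_1(z)|^p\le c_2(C/r)^p|x-z|^p$-term integrates over $B^n(x_0,2r)^2$ to at most $Cr^{n-sp}$ (using $p-sp>0$), hence to at most $Cr^{n+m-sp}$ after the $y$-integral, which is the ``$+1$''. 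For the nonlocal tail, i.e. $z\notin B^n(x_0,2r)\supset\supp\eta_1$, one has $v(z,y)=0$, and on the relevant set $x\in\supp\eta_1$ (where $u(x,y)\ge0$) I would estimate crudely $(u(x,y)-u(z,y))^{p-1}\le(u(x,y)+u_-(z,y))^{p-1}$, then use $u(x,y)^{p-1}(u(x,y)+d)^{1-p}\le1$ and $(u(x,y)+d)^{1-p}\le d^{1-p}$ to bound the tail integrand by $C\eta(x,y)^p\big(1+d^{1-p}u_-(z,y)^{p-1}\big)|x-z|^{-n-sp}$; since $|x-x_0|\le\tfrac32 r$ and $|z-x_0|\ge2r$ give $|x-z|\ge\tfrac14|z-x_0|$, and $u\ge0$ on $B^n(x_0,R)\times B^m(y_0,R)$ kills $u_-(z,y)$ for $2r\le|z-x_0|\le R$, integrating successively in $z$, then $x$, then $y$ yields the boundary tail terms on the right of \eqref{eq:DKP}.

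Adding the bounds for $\mathcal{H}^x$ and $\mathcal{H}^y$ and using $0\le\mathcal{H}^x+\mathcal{H}^y$, the negative logarithmic terms pass to the left and the error and tail terms to the right; collecting the common factor $r^{n+m-sp}$ gives \eqref{eq:DKP}. I expect the main obstacle to be the bookkeeping in this last step: keeping exact track of signs while matching the pointwise algebraic inequality on the diagonal block against the (nonnegative) tail contribution and the $\nabla\eta$-error, and checking that the cancellation of $u_-$ on the annulus $2r\le|z-x_0|\le R$ indeed turns a $B^n(x_0,2r)^c$-tail into a $(B^n_R)^c$-tail. The algebraic inequality of \cite{DKP}, which is where the logarithm is generated, is the one non-elementary ingredient and is simply quoted.
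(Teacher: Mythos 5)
Your proposal is correct and follows essentially the same route as the paper: test \eqref{eq:supersol} with the product cut-off $v=\eta_1^p(x)\eta_2^p(y)(u+d)^{1-p}$ and transpose the proof of Lemma 1.3 in \cite{DKP}. The paper gives only this one-line instruction, so the additional detail you supply — in particular the observation that the product cut-off makes each of the two halves of $\mathcal{H}_{s,p}$ decouple, slice by slice, into the one-group-of-variables DKP situation, after which the DKP pointwise inequality yields the logarithm on the diagonal block and a crude bound handles the tail — is exactly the right way to fill in that step.
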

	\begin{proof}
		Let $d>0,$ $r\in(0,\nicefrac{R}2)$,
		\begin{align*}
			&\phi\in C_0^{\infty}(B^n_{\nicefrac{3r}2}), \quad
			0\le \phi\le 1, \quad \phi\equiv1 \text{ in } B_r^n,\quad
			|D_x\phi|<\frac{c}r \text{ in } B_{\nicefrac{3r}2}^n,
			\text{ and }\\
			&\psi\in C_0^{\infty}(B^m_{\nicefrac{3r}2}), \quad
			0\le \psi\le 1, \quad \psi\equiv1 \text{ in } B_r^m,\quad
			|D_x\psi|<\frac{c}r \text{ in } B_{\nicefrac{3r}2}^m.
		\end{align*}
		
		Taking
		$v(x,y)=\phi^p(x)\psi^p(y)(u(x,y)+d)^{1-p}$
		as test function in \eqref{eq:supersol} and following
		the proof of Lemma 1.3 in \cite{DKP}, we get \eqref{eq:DKP}.
	\end{proof}
	
	\begin{lem}\label{lema:positivo}
		Assume {\rm{A1}} and {\rm{A2}}. If  $\Omega$ is 
		connected and $u\in \widetilde{\mathcal{W}}^{s,p}
   		(\Omega)$ is such that
   		\[
   			\mathcal{H}_{s,p}(u,v)\ge0\quad\forall v\in
   			\widetilde{\mathcal{W}}^{s,p}(\Omega), v\ge0 \text{ in } \Omega,
   		\]
   		$u\ge0$ in $\Omega$ and  $u\not\equiv0$ in $\Omega$ then
   		$u>0$ in $\Omega.$
	\end{lem}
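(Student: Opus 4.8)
The plan is to follow the classical argument for the strong maximum principle for the fractional $p$-Laplacian (cf.\ \cite{DKP,BF}), adapted to the split structure of $\mathcal{H}_{s,p}$, using Lemma~\ref{lema:DKP} as the fundamental quantitative input. First I would fix a point $(x_0,y_0)\in\Omega$ at which I want to prove positivity, choose $R>0$ so that the ``box'' $B^n(x_0,R)\times B^m(y_0,R)\subset\subset\Omega$, and assume temporarily that $u\ge 0$ on that box (which holds since $u\ge 0$ in all of $\Omega$; in particular the negative-part integrals on the right side of \eqref{eq:DKP} are finite because $u\in\widetilde{\mathcal{W}}^{s,p}(\Omega)$ and $u_-$ is supported off $\Omega$, hence at positive distance from the box). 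Then for every $d>0$ and $0<2r<R$, Lemma~\ref{lema:DKP} bounds the logarithmic energies of $(u+d)$ on $B_r^n\times B_r^m$ (in both the $x$- and the $y$-direction) by a constant times $r^{n+m-sp}$ times a bracket that stays bounded as $d\to 0^+$ actually grows, so one must keep $d$ fixed at this stage.

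The core step is to run the following dichotomy/continuation argument. Suppose, for contradiction, that $u$ vanishes on a set of positive measure in $\Omega$; then by connectedness there is a box $Q=B^n(x_0,R)\times B^m(y_0,R)\subset\subset\Omega$ on which $\operatorname{ess\,inf}_Q u=0$ while $u\not\equiv 0$ on (a slightly larger box overlapping) $Q$. On a small sub-box $B_r^n\times B_r^m$ I apply the logarithmic estimate \eqref{eq:DKP}: the left side controls, via a fractional Poincaré inequality on the ball (Lemma~\ref{lema:Poicare}-type, or the standard one in \cite{DKP}), the oscillation of $\log(u+d)$ in $L^p(B_r^n\times B_r^m)$. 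Letting $d\to 0^+$, if $u$ is essentially zero on a positive-measure subset of $B_r^n\times B_r^m$, then $\log(u+d)\to -\infty$ there, forcing the $L^p$ norm of $\log(u+d)$ minus its average to blow up, which contradicts the uniform-in-$d$ bound coming from \eqref{eq:DKP} (the right-hand bracket is $O(d^{-(p-1)})$, but one rescales: dividing through, the relevant quantity is $d^{p-1}\cdot(\text{log-energy})$, which $\to 0$, forcing the log-oscillation to be finite after all). This is the step where one needs to be careful: the precise way \eqref{eq:DKP} is exploited is to show that $u$ cannot be zero on a positive-measure set inside any such box without being zero on a slightly smaller concentric box; then a covering/chaining argument along a chain of overlapping boxes connecting any two points of $\Omega$ (using that $\Omega$ is connected and open) propagates the zero set, yielding $u\equiv 0$ in $\Omega$, contradicting the hypothesis $u\not\equiv 0$.

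I expect the main obstacle to be the bookkeeping in passing from the logarithmic energy bound to an honest pointwise (or positive-measure) statement: one must combine \eqref{eq:DKP} with a Poincaré-type inequality adapted to the \emph{anisotropic} seminorm $[\,\cdot\,]_{\mathcal{W}^{s,p}}$ on a ball (controlling $\|\log(u+d)-\langle\log(u+d)\rangle\|_{L^p(B_r^n\times B_r^m)}$ by the two directional log-energies), and then choose the order of limits $d\to 0^+$ correctly so that the $d^{-(p-1)}$ factor is absorbed. A secondary technical point is the chaining: since $\Omega$ is merely a bounded Lipschitz domain, I would connect points by a compact polygonal path and cover it by finitely many boxes $B^n\times B^m\subset\subset\Omega$ with consecutive boxes having overlapping interiors of positive measure, so that positivity (equivalently, non-degeneracy of the zero set) transfers from one box to the next. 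Everything else — the finiteness of the tail integrals, the admissibility of the comparison, the reduction to a single box — is routine given Lemmas~\ref{lema:DKP} and~\ref{lema:Poicare}.
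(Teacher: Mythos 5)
Your overall plan matches the paper's: use the logarithmic estimate of Lemma~\ref{lema:DKP} on a small box, let $d\to0$ to force the solution to vanish there, and propagate by a chain of overlapping boxes. However, there is a genuine gap at the crucial step, namely the treatment of the $d$-dependence of the right-hand side of \eqref{eq:DKP}.

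You write that the bracket in \eqref{eq:DKP} is $O(d^{-(p-1)})$, and you propose to rescue the $d\to0$ limit by a ``rescaling'' that multiplies through by $d^{p-1}$. This does not close the argument: dividing the inequality by $d^{-(p-1)}$ only shows $d^{p-1}\cdot(\text{log-energy})$ stays bounded, which is the same information as the unscaled bound and does not rule out log-energy $\sim d^{-(p-1)}$. The observation you are missing is that under the hypotheses of the lemma we have $u\ge0$ in $\Omega$ \emph{and} $u\equiv0$ in $\Omega^c$ (since $u\in\widetilde{\mathcal{W}}^{s,p}(\Omega)$), hence $u_-\equiv0$ on all of $\mathbb{R}^{n+m}$. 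The two tail integrals of $u_-^p$ in \eqref{eq:DKP} therefore vanish identically, and the entire right-hand side reduces to $Cr^{n+m-sp}$, which is independent of $d$. That $d$-uniform bound is what the $d\to0$ argument really rests on; once you have it, the blow-up of $\log(1+u/d)$ on the positive part of $u$ gives the contradiction cleanly, with no rescaling needed.

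A secondary difference from the paper, which is not a gap but is worth noting: you appeal to a Poincar\'e inequality for the anisotropic seminorm on a box to control $\|\log(u+d)-\langle\log(u+d)\rangle\|_{L^p}$. The paper sidesteps the need for such an inequality by using the set $Z=\{u=0\}\cap(\text{box})$ directly: writing $F_d(x,y)=F_d(x,y)-F_d(z,w)$ for $(z,w)\in Z$ (where $F_d(z,w)=0$), splitting through the intermediate point $(z,y)$, integrating over $Z$ and dividing by $|Z|$, one bounds $\int|F_d|^p$ on the box by the two directional log-energies without ever introducing the mean of $F_d$. This avoids having to prove a new anisotropic Poincar\'e inequality, which the paper does not have available. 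If you insist on the Poincar\'e route you would need to supply and prove that inequality.
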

	\begin{proof}
		In this proof we borrow ideas from \cite{BF}. Since $\Omega$ is
		a bounded connected open set, it is enough to prove that $u>0$ in
		$K$ for any $K\subset\subset \Omega$ a connected 
		compact set such that $u\not\equiv0$ in $K.$
		
		Let $K\subset\subset \Omega$ be a connected compact set such that
		$u\neq0$ in $K$. Then there exists
		$r>0$ such that
		\[
			K\subset \left \{ (x,y)\in \Omega\colon
			\max_{(z,w)\in\partial \Omega}\{|z-x|,|w-y|\}>2r \right \}.
		\]
		Since $K$ is compact, there exists $\{(x_j,y_j)\}_{j=1}^k\subset K$
		such that
		\begin{equation}\label{eq:inc_int}
			K\subset\bigcup_{j=1}^k B^n_j\times B^m_j,
			\quad \text{	and }  \quad
			|(B^n_j \times B^m_j)\cap (B^n_{j+1} \times B^m_{j+1})|>0
		\end{equation}
		for any $j\in\{1,\dots,k-1\},$ where
		$B_j^n=B^n(x_j,\nicefrac{r}2)$ and
		$B_j^m=B^m(y_j,\nicefrac{r}2).$

		To obtain a contradiction, suppose that
		$|\{(x,y)\colon u(x,y)=0\}\cap K|>0$ then there exists
		$j\in\{1,\dots,k\}$ such that
		\[
			Z=\{(x,y)\colon u(x,y)=0\}\cap(B^n_j\times
			 B^m_j)
		\]
		has positive measure.
		
		Given $d>0,$ we define
		$$
			F_d\colon B^n_j\times B^m_j\to \mathbb{R}\quad \mbox{ by } \quad
			F_d(x,y)=\log\left(1+\dfrac{u(x,y)}{d}\right).
		$$
		Then, for any $(x,y)\in B^n(x_j,\nicefrac{r}2)\times
			 B^m(y_j,\nicefrac{r}2)$ and $(z,w)\in Z$ we have
		\begin{align*}
			F_d(z,w)&=0\\
			|F_d(x,y)|^p&=|F(x,y)-F(z,w)|^p\\
			 &\le 2^{p-1}\dfrac{|F(x,y)-F(z,y)|^p}{|z-x|^{n+sp}}
			 |z-x|^{n+sp} \\
			 &\quad +2^{p-1}
			 \dfrac{|F(z,y)-F(z,w)|^p}{|w-y|^{m+sp}}
			 |w-y|^{n+sp}\\
			 &\le 2^{p-1}r^{n+sp}\dfrac{|F(x,y)-F(z,y)|^p}{|z-x|^{n+sp}}\\
			 &\quad +2^{p-1}r^{m+sp}\dfrac{|F(z,y)-F(z,w)|^p}{|w-y|^{m+sp}}\\
			 &= 2^{p-1}r^{n+sp}
			 \left|\log\left(
			 \dfrac{u(x,y)+d}{u(z,y)+d}\right)
			 \right|^p\dfrac{1}{|z-x|^{n+sp}}\\
			 &\quad +2^{p-1}r^{m+sp}\left|\log\left(
			 \dfrac{u(z,y)+d}{u(z,w)+d}\right)
			 \right|^p\dfrac{1}{|w-y|^{m+sp}}.
		\end{align*}
		Therefore,
		\begin{align*}
			|Z|&|F_d(x,y)|^p=\iint_{Z}|F_d(x,y)|^p dwdz\\
			 &\le c_1r^{n+m+sp}
			 \int_{B^n_j }\left|\log\left(
			 \dfrac{u(x,y)+d}{u(z,y)+d}\right)
			 \right|^p\dfrac{dz}{|z-x|^{n+sp}}\\
			 &\quad +2^{p-1}r^{m+sp}
			 \int_{B^n_j}\int_{B^m_j}
			 \left|\log\left(
			 \dfrac{u(z,y)+d}{u(z,w)+d}\right)
			 \right|^p\dfrac{dwdz}{|w-y|^{m+sp}}
		\end{align*}
		for any $(x,y)\in B^n(x_j,\nicefrac{r}2)
		\times B^m(y_j,\nicefrac{r}2).$ Here $c_1=c_1(m,p)>0$ is a constant.
		Then
		\begin{align*}
			\int_{B^n_j}\int_{B^m_j}
			&|F_d(x,y)|^p dxdy\\
			 &\le \dfrac{c_1r^{n+m+sp}}{|Z|}
			 \int_{B^m_j}\int_{B^n_j}
			 \int_{B^n_j}\left|\log\left(
			 \dfrac{u(x,y)+d}{u(z,y)+d}\right)
			 \right|^p\dfrac{dzdxdy}{|z-x|^{n+sp}}\\
			 &\quad +\dfrac{c_2r^{n+m+sp}}{|Z|}
			 \int_{B^n_j}
			 \int_{B^m_j}
			 \int_{B^m_j}
			 \left|\log\left(
			 \dfrac{u(x,y)+d}{u(x,w)+d}\right)
			 \right|^p\dfrac{dwdxdy}{|w-y|^{m+sp}}.
		\end{align*}
		Thus, by Lemma \ref{lema:DKP} and since $u\ge 0 $ in $\Omega,$ we
		get
		\[
			\int_{B^n_j}\int_{B^m_j}
			|F_d(x,y)|^p dxdy\le C\dfrac{r^{2n+2m}}{|Z|},
		\]
		where $C=C(n,m,s,p)>0$ is a constant. Taking $d \to 0$ in the last
		inequality, we get that $u\equiv0$ in $B_j^n\times B_j^m.$
		
		By \eqref{eq:inc_int}, there exists $i\in\{1,\dots,k\}$ such
		that $i\neq j$ and
		\[
			|(B^n_i\times B^m_i)\cap\{(x,y)\colon u(x,y)=0\}|>0.
		\]
		Then, we can repeat the previous argument for
		$B_i^n\times B_i^m$ and obtain $u\equiv0$ in $B_i^n\times B_i^m.$
		In this way we conclude that $u\equiv 0$ in $K$ which 
		contradicts the fact
		that $u\not\equiv0$ in $K.$ Thus
		$|\{(x,y)\colon u(x,y)=0\}\cap K|=0.$
	\end{proof}

	Now, we are ready to prove that the eigenfunctions associated to
	$\lambda_1(s,p)$ do not change sign.
	\begin{teo}\label{teo:autposi}
		Assume {\rm{A1}} and {\rm{A2}}. If
		$u$ is an eigenfunction  associated to
		$\lambda_1(s,p)$ then $|u|>0$ in $\Omega.$
	\end{teo}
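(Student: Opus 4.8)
The plan is to deduce the statement from the strong minimum principle of Lemma \ref{lema:positivo}, applied not to $u$ itself but to $|u|$. The only substantial point is therefore to show that $|u|$ is again an eigenfunction associated with $\lambda_1(s,p)$; once this is established, positivity of $|u|$ follows at once from that lemma.

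So, let $u$ be an eigenfunction associated with $\lambda_1(s,p)$. By \eqref{eq:autcarac}, $u$ realizes the infimum in \eqref{eq:autovalor}. Next I would record the reverse triangle inequality
$$\big|\,|u(x,y)|-|u(z,y)|\,\big|\le |u(x,y)-u(z,y)|,\qquad \big|\,|u(x,y)|-|u(x,w)|\,\big|\le |u(x,y)-u(x,w)|,$$
from which $[|u|]_{\wwspr}^p\le [u]_{\wwspr}^p$. The same inequalities (exactly as in the argument preceding Lemma \ref{lema:aux1}) show that $|u|\in\widetilde{\mathcal{W}}^{s,p}(\Omega)$, since the extension by zero of $|u|$ is the absolute value of the extension by zero of $u$, which therefore lies in $\wwspr$. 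Since moreover $\||u|\|_{\lp}=\|u\|_{\lp}$ and $|u|\not\equiv0$, the Rayleigh quotient of $|u|$ does not exceed $\lambda_1(s,p)$, hence equals it by \eqref{eq:autovalor}. Thus $|u|$ is a minimizer in \eqref{eq:autovalor}, and arguing as in the proof of Theorem \ref{teo:1erAut} (writing the Euler--Lagrange equation for the constrained minimization and identifying the Lagrange multiplier through \eqref{eq:autcarac}) we conclude that $|u|$ is a weak solution of \eqref{eq:EDP} with $\lambda=\lambda_1(s,p)$.

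Finally I would invoke Lemma \ref{lema:positivo} with $|u|$ in place of $u$. Indeed $|u|\ge 0$ in $\Omega$, $|u|\not\equiv 0$ (as $u$ is a nontrivial eigenfunction), and $\Omega$ is connected by assumption A1. Moreover, for every $v\in\widetilde{\mathcal{W}}^{s,p}(\Omega)$ with $v\ge 0$ in $\Omega$,
$$\mathcal{H}_{s,p}(|u|,v)=\lambda_1(s,p)\int_{\Omega}(|u|(x,y))^{p-1}v(x,y)\,dxdy\ge 0,$$
because $(|u|)^{p-1}\ge 0$ and $\lambda_1(s,p)>0$, the positivity of $\lambda_1(s,p)$ being a consequence of the Poincar\'e inequality of Lemma \ref{lema:Poicare}. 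Hence $|u|$ satisfies \eqref{eq:supersol}, and Lemma \ref{lema:positivo} yields $|u|>0$ in $\Omega$, which is the claim. The only delicate step in this scheme is the first one — checking that $|u|$ is admissible and minimizing, i.e. that passing to the absolute value neither enlarges the seminorm nor destroys membership in $\widetilde{\mathcal{W}}^{s,p}(\Omega)$; after that, everything is a direct combination of Lemma \ref{lema:Poicare} and Lemma \ref{lema:positivo}, and I do not expect a genuine obstacle.
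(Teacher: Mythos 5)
Your proof is correct in its essentials and follows the same strategy as the paper: show that $|u|$ is again a first eigenfunction (by observing that passing to the absolute value decreases the seminorm and preserves the $L^p$ norm, so $|u|$ is a minimizer of the Rayleigh quotient), verify that $|u|$ satisfies the sign condition \eqref{eq:supersol}, and then apply the strong minimum principle of Lemma \ref{lema:positivo}.

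The one place where you and the paper diverge is the treatment of connectedness. You dispose of it by reading assumption A1 (``bounded Lipschitz domain'') as implying that $\Omega$ is connected. The paper, however, does not rely on this reading: it inserts a preliminary step showing, by contradiction, that $|u|$ cannot vanish identically on any connected component of $\Omega$ (if it did, testing $\mathcal{H}_{s,p}(|u|,\phi)$ against $\phi\in C_0^\infty(A)$ for that component $A$ produces a strictly negative quantity that should equal zero), and then applies Lemma \ref{lema:positivo} component by component. That step is genuinely needed if $\Omega$ is disconnected, because Lemma \ref{lema:positivo} is stated only for connected $\Omega$ and would otherwise be inapplicable; a first eigenfunction could a priori be supported in a single component with the lemma giving no information on the others. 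If ``domain'' is taken in its standard sense (open and connected), your shortcut is legitimate and the extra step is vacuous; but since the paper itself goes to the trouble of handling multiple components, you should either justify the connectedness explicitly or, more safely, include the per-component reduction as the paper does.

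A small, harmless overstep: you invoke Lemma \ref{lema:Poicare} to get $\lambda_1(s,p)>0$ in order to show $\mathcal{H}_{s,p}(|u|,v)\ge 0$; in fact $\lambda_1(s,p)\ge 0$ (immediate from \eqref{eq:autcarac}) already suffices, since $|u|^{p-1}v\ge 0$.
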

	
	\begin{proof}
		We start by showing that if $u$ is an eigenfunction corresponding to 
		$\lambda_1(s,p)$ then $|u|\not\equiv0$ in all connected components of 
		$\Omega.$ Our proof is by contradiction. We therefore assume that
		there is a connected component $A$ of $\Omega$ such that $|u|\equiv0.$
		Since $u$ is an eigenfunction corresponding to $\lambda_1(s,p)$
		then so is $|u|.$ Then
		\[
			\begin{aligned}
				0&=\lambda_1(s,p)\int_{\Omega}|u(x,y)|^{p-1}\phi(x,y)\, dxdy
				=\mathcal{H}_{s,p}(|u|,\phi)\\
				&=-2\int_{A^c}\int_{A_y}
				\dfrac{|u(x,y)|^{p-1}\phi(z,y)}{|x-z|^{n+sp}}dzdxdy
				-2\int_{A^c}\int_{A_x}
				\dfrac{|u(x,y)|^{p-1}\phi(x,w)}{|y-w|^{m+sp}}dwdxdy
			\end{aligned}
		\]
		for all $\phi\in C^\infty_0(A),$ which is a contradiction.

		Therefore, if $A$ connected components $C$ of 
		$\Omega$  then $|u|\not\equiv 0$ 
		in $A$ and
		\[
   			\mathcal{H}_{s,p}(|u|,v)=\lambda_1(s,p)
   			\int_{\Omega}|u(x,y)|^{p-1}v(x,y)\, dxdy\ge0\quad\forall v\in
   			\widetilde{\mathcal{W}}^{s,p}(A).
   		\]
   		Then, by Lemma \ref{lema:positivo}, $|u|>0$ in $A.$ Therefore
   		$|u|>0$ in $\Omega.$
 	\end{proof}
 	
 	 Our next result show that $\lambda_1(s,p)$ is simple.
 	
 	\begin{teo}\label{teo:autoval1}
		 Assume {\rm A1} and {\rm A2}.
    	 Let $u$ be a positive eigenfunction corresponding to
    	 $\lambda_{1}(s,p).$ If $\lambda>0$ is such that there exists
    	 a non-negative eigenfunction $v$ of \eqref{eq:EDP} with
    	 eigenvalue $\lambda,$ then $\lambda=\lambda_1(s,p)$ and
    	 there exists $k\in\mathbb{R}$ such that $v = k u$ a.e. in $\Omega.$
	\end{teo}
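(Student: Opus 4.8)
The plan is to adapt the Picone-inequality (or \emph{hidden convexity}) proof of simplicity of the first eigenvalue of the $p$-Laplacian; the key structural fact is that $\mathcal{H}_{s,p}$ is the sum of two forms of $p$-Laplacian type, one in the $x$-variables and one in the $y$-variables, so the whole argument can be run factor by factor, borrowing ideas from \cite{BF}. First I would record that $v>0$ in $\Omega$: since $\lambda>0$ and $v\ge 0$, the weak formulation gives $\mathcal{H}_{s,p}(v,\phi)=\lambda\int_\Omega v^{p-1}\phi\,dxdy\ge 0$ for every nonnegative $\phi\in\widetilde{\mathcal{W}}^{s,p}(\Omega)$, so $v$ is a nonnegative supersolution with $v\not\equiv 0$; since $\Omega$ is a connected domain, Lemma \ref{lema:positivo} yields $v>0$ in $\Omega$. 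Also $u,v\in L^\infty(\mathbb{R}^{n+m})$ by Lemma \ref{lema:cotainfty}, and after rescaling we may assume $\|u\|_{L^p(\Omega)}=\|v\|_{L^p(\Omega)}=1$. The elementary inequality that drives the proof is the discrete Picone inequality: for $p>1$, $a,b>0$ and $c,d\ge 0$,
\[
(a-b)^{p-1}\left(\frac{c^p}{a^{p-1}}-\frac{d^p}{b^{p-1}}\right)\le |c-d|^p ,
\]
with equality if and only if $ad=bc$ (when $p=2$ the two sides differ by $(ad-bc)^2/(ab)$).

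Next, for $\varepsilon>0$ I would set $u_\varepsilon=u+\varepsilon$, $v_\varepsilon=v+\varepsilon$ and use
\[
\varphi_\varepsilon=\frac{v_\varepsilon^p}{u_\varepsilon^{p-1}}-\varepsilon
\]
as a test function for the equation satisfied by $u$. On $\Omega^c$ we have $u=v=0$, hence $\varphi_\varepsilon\equiv 0$ there; moreover $(r_1,r_2)\mapsto (r_2+\varepsilon)^p(r_1+\varepsilon)^{1-p}-\varepsilon$ is Lipschitz on $[0,M]^2$ for $M=\|u\|_\infty+\|v\|_\infty$ (as $r_1+\varepsilon\ge\varepsilon>0$), so $|\varphi_\varepsilon(x,y)-\varphi_\varepsilon(z,y)|\le C_\varepsilon(|u(x,y)-u(z,y)|+|v(x,y)-v(z,y)|)$ and likewise in the $w$-variable, whence $\varphi_\varepsilon\in\widetilde{\mathcal{W}}^{s,p}(\Omega)$ is admissible. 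Since the shift $\varepsilon$ cancels inside increments, in the $x$-part of $\mathcal{H}_{s,p}(u,\varphi_\varepsilon)$ one has $u(x,y)-u(z,y)=u_\varepsilon(x,y)-u_\varepsilon(z,y)$ and $\varphi_\varepsilon(x,y)-\varphi_\varepsilon(z,y)=\tfrac{v_\varepsilon(x,y)^p}{u_\varepsilon(x,y)^{p-1}}-\tfrac{v_\varepsilon(z,y)^p}{u_\varepsilon(z,y)^{p-1}}$; applying Picone pointwise with $a=u_\varepsilon(x,y),\ b=u_\varepsilon(z,y),\ c=v_\varepsilon(x,y),\ d=v_\varepsilon(z,y)$ (and similarly in the $w$-part) gives
\[
\mathcal{H}_{s,p}(u,\varphi_\varepsilon)\le [v]_{\mathcal{W}^{s,p}(\mathbb{R}^{n+m})}^p=\lambda\,\|v\|_{L^p(\Omega)}^p .
\]
On the other hand $\mathcal{H}_{s,p}(u,\varphi_\varepsilon)=\lambda_1(s,p)\int_\Omega u^{p-1}\varphi_\varepsilon\,dxdy$, and since $(u/u_\varepsilon)^{p-1}v_\varepsilon^p\to v^p$ a.e.\ in $\Omega$ with a uniform integrable bound, dominated convergence yields $\int_\Omega u^{p-1}\varphi_\varepsilon\,dxdy\to\int_\Omega v^p\,dxdy=1$. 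Letting $\varepsilon\to 0$ we obtain $\lambda_1(s,p)\le\lambda$; exchanging the roles of $u$ and $v$ (testing the equation for $v$ against $u_\varepsilon^p/v_\varepsilon^{p-1}-\varepsilon$) gives $\lambda\le\lambda_1(s,p)$, so $\lambda=\lambda_1(s,p)$.

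Finally, with $\lambda=\lambda_1(s,p)$ the deficit
\[
0\le [v]_{\mathcal{W}^{s,p}(\mathbb{R}^{n+m})}^p-\mathcal{H}_{s,p}(u,\varphi_\varepsilon)=\lambda_1(s,p)\Big(\|v\|_{L^p(\Omega)}^p-\int_\Omega u^{p-1}\varphi_\varepsilon\,dxdy\Big)\longrightarrow 0 ,
\]
and this deficit is the integral, over the product domains, of a nonnegative quantity, namely the pointwise gap in the Picone inequalities used above. Since these integrands converge pointwise as $\varepsilon\to 0$, Fatou's lemma forces their limits to vanish a.e., i.e.\ the Picone inequality holds with equality for a.e.\ pair of points; equivalently $u(x,y)\,v(z,y)=u(z,y)\,v(x,y)$ for a.e.\ $(x,y),(z,y)$ and $u(x,y)\,v(x,w)=u(x,w)\,v(x,y)$ for a.e.\ $(x,y),(x,w)$. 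Thus $v/u$ is constant along $x$-slices and along $y$-slices of $\Omega$, and since $\Omega$ is connected a standard argument shows $v/u\equiv k$ a.e.\ in $\Omega$ for some constant $k$; undoing the normalization, $v=ku$ for some $k\in\mathbb{R}$ (and then $\mathcal{H}_{s,p}(v,\cdot)=k^{p-1}\mathcal{H}_{s,p}(u,\cdot)$ confirms once more $\lambda=\lambda_1(s,p)$).

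I expect the main obstacle to be purely technical: the conceptual heart (the discrete Picone inequality together with the additive splitting of $\mathcal{H}_{s,p}$) is standard, but one must carefully justify (i) the admissibility of $\varphi_\varepsilon$ in $\widetilde{\mathcal{W}}^{s,p}(\Omega)$, including the interaction terms between $\Omega$ and $\Omega^c$ — this is where the Lipschitz bound above is used — and (ii) the exchange of limit and integral in the nonlocal forms, via dominated convergence on the right-hand side and Fatou on the left, for which the $L^\infty$ bounds of Lemma \ref{lema:cotainfty} and the lower bound $u_\varepsilon\ge\varepsilon$ are essential. The passage from ``$v/u$ constant on slices'' to ``$v/u$ constant on $\Omega$'' via connectedness is then routine.
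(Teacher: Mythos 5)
Your proof is correct and follows essentially the same route as the paper's: both hinge on the discrete Picone inequality applied separately to the $x$- and $y$-parts of $\mathcal{H}_{s,p}$, with an additive regularization to make the quotient test function admissible, followed by a Fatou argument to extract the equality case. The only cosmetic differences are that you test the $u$-equation against $v_\varepsilon^p/u_\varepsilon^{p-1}-\varepsilon$ and symmetrize, whereas the paper tests the $v$-equation against $u^p/v_k^{p-1}$ and invokes $\lambda_1(s,p)\le\lambda$ for free (making your first direction redundant but harmless), and you spell out $v>0$ via Lemma~\ref{lema:positivo} which the paper leaves implicit.
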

	
	\begin{proof}
		Since $\lambda_1(s,p)$ is the first eigenvalue we have
		that $\lambda_1(s,p)\le\lambda$.
		Let $k\in\mathbb{N}$ and define $v_k\coloneqq v+\nicefrac1{k}.$

		We begin proving that
		$w_{k}\coloneqq u^{p} / v_k^{p-1}\in
		\widetilde{\mathcal{W}}^{s,p}(\Omega).$
		It is immediate that  $w_k=0$ in $\Omega^c$
		and $w_{k}\in L^{p}(\Omega),$ due to the fact that
		$u\in L^{\infty}(\Omega),$ see
	 	Lemma \ref{lema:cotainfty}.

		On the other hand
		\begin{align*}
			|w_{k}(x,y)&-w_{k}(z,w)|
			\\
			=& \Bigg|
			\dfrac{u(x,y)^{p}-u(z,w)^p}{v_k(x,y)^{p-1}}
			+\dfrac{u(z,w)^p\left(v_k(z,w)^{p-1}-v_k(x,y)^{p-1}\right)}
			{v_k(x,y)^{p-1}v_k(z,w)^{p-1}}\Bigg|\\
			\le& k^{p-1}\left|u(x,y)^{p}-u(z,w)^p\right|
			+\|u\|_{L^{\infty}(\Omega)}^p
			\dfrac{\left|v_k(x,y)^{p-1}-v_k(z,w)^{p-1}\right|}
			{v_k(x,y)^{p-1}v_k(w,z)^{p-1}}\\
			\le&2\|u\|_{L^{\infty}(\Omega)}^{p-1}k^{p-1}p
			|u(x,y)-u(z,w)|\\
			&+\|u\|_{L^{\infty}(\Omega)}^p(p-1)
			\dfrac{v_k(x,y)^{p-2}+v_k(z,w)^{p-2}}
			{v_k(x,y)^{p-1}v_k(z,w)^{p-1}}|v_k(x,y)-v_k(z,w)|\\
			\le& 2\|u\|_{L^{\infty}(\Omega)}^{p-1}k^{p-1}p|u(x,y)-u(z,w)|\\
			&+\|u\|_{L^{\infty}(\Omega)}^p(p-1)k^{p-1}
			\left(\dfrac1{v_k(x,y)}+\dfrac1{v_k(z,w)}\right)|v(y)-v(x)|\\
	 		\le& C(k,p,\|u\|_{L^{\infty}(\Omega)})
			\left(|u(x,y)-u(z,w)|+|v(x,y)-v(z,w)|\right)
		\end{align*}
		for all $(x,y),(z,w)\in\mathbb{R}^{n+m}.$ Hence, we have that
		$w_{k}\in \widetilde{\mathcal{W}}^{s,p}(\Omega)$
		for all $k\in\mathbb{N}$ since $u,v\in
		\widetilde{\mathcal{W}}^{s,p}(\Omega).$
		
		Set
		\begin{align*}
			L(u,v_k)(x,y,z,w)&=
	   		|u(x,y)-u(w,z)|^p\\& \quad -(v_k(x,y)-v_k(w,z))^{p-1}
	   		\left(\dfrac{u(x,y)^p}{v_k(x,y)^{p-1}}
	   		-
	   		\dfrac{u(z,w)^p}{v_k(z,w)^{p-1}}\right).
		\end{align*}
		Then, by  \cite[Lemma 6.2]{A} and since
		$u,v$ are two positive eigenfunctions of
		problem \eqref{eq:EDP} with eigenvalues $\lambda_1(s,p)$ and
		$\lambda$ respectively, we have
		\begin{align*}
    		0\le&
    		\int_{\mathbb{R}^{n+m}}\int_{\mathbb{R}^n}
    		\dfrac{L(u,v_k)(x,y,z,y)}{|x-z|^{n+sp}} dzdxdy
    		+\int_{\mathbb{R}^{n+m}}\int_{\mathbb{R}^m}
    		\dfrac{L(u,v_k)(x,y,x,w)}{|y-w|^{m+sp}} dwdxdy\\
    		\le& \int_{\mathbb{R}^{n+m}}\int_{\mathbb{R}^n}
    		\dfrac{|u(x,y)-u(z,y)|^p}{|x-z|^{n+sp}} dz dxdy
    		+\int_{\mathbb{R}^{n+m}}\int_{\mathbb{R}^m}
    		\dfrac{|u(x,y)-u(x,w)|^p}{|y-w|^{n+sp}} dw dxdy\\
    		&-\mathcal{H}_{s,p}(v,w_k)\\
			\le&
			\lambda_{1}(s,p)\int_{\Omega}u(x,y)^p\,dxdy-
			\lambda\int_{\Omega}v(x,y)^{p-1}
			w_k(x,y)\, dxdy\\
			=&
			\lambda_{1}(s,p)\int_{\Omega}u(x,y)^p\,dxdy-
			\lambda\int_{\Omega}v(x,y)^{p-1}
			\dfrac{u(x,y)^p}{v_k(x,y)^{p-1}}\, dxdy.
		\end{align*}
		By Fatou's lemma and the dominated convergence theorem we obtain
		\[
    		\int_{\mathbb{R}^{n+m}}\int_{\mathbb{R}^n}
    		\dfrac{L(u,v)(x,y,z,y)}{|x-z|^{n+sp}} dzdxdy
			+\int_{\mathbb{R}^{n+m}}\int_{\mathbb{R}^m}
			\dfrac{L(u,v)(x,y,x,w)}{|y-w|^{m+sp}} dwdxdy=0
		\]
		due to $\lambda_1(s,p,h)\le \lambda.$
		Then $L(u,v)(x,y,z,y)=L(u,v)(x,y,x,w)=0$ a.e.
		Hence, again by Lemma 6.2 in \cite{A},
		$u(x,y)=\ell_1(y)v(x,y)$ and $u(x,y)=\ell_2(x)v(x,y)$
		for all $(x,y)\in \mathbb{R}^{n+m}$. Then, we conclude that $u=\ell v$
		for some constant $\ell>0.$
	\end{proof}
	
	Finally we will prove that $\lambda_1(s,p)$ is isolated.
	
	\begin{teo}\label{teo:autposi.2}
		Assume {\rm{A1}} and {\rm{A2}}. Them $\lambda_1(s,p)$ is isolated.
	\end{teo}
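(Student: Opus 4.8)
The plan is to argue by contradiction: suppose there is a sequence of eigenvalues $\mu_k \to \lambda_1(s,p)$ with $\mu_k \neq \lambda_1(s,p)$, and corresponding eigenfunctions $u_k$, which we may normalize so that $\|u_k\|_{L^p(\Omega)} = 1$. Since $\mu_k = [u_k]_{\mathcal{W}^{s,p}(\mathbb{R}^{n+m})}^p$ is bounded, the sequence $u_k$ is bounded in $\widetilde{\mathcal{W}}^{s,p}(\Omega)$; by Lemma \ref{lema:contincl} together with Theorem \ref{teo:compacemb} we extract a subsequence with $u_k \rightharpoonup u$ weakly in $\widetilde{\mathcal{W}}^{s,p}(\Omega)$ and $u_k \to u$ strongly in $L^p(\Omega)$, so $\|u\|_{L^p(\Omega)} = 1$ and, by weak lower semicontinuity of the seminorm, $[u]_{\mathcal{W}^{s,p}(\mathbb{R}^{n+m})}^p \le \liminf \mu_k = \lambda_1(s,p)$. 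Hence $u$ is a minimizer of the Rayleigh quotient \eqref{eq:autovalor}, i.e. an eigenfunction associated with $\lambda_1(s,p)$. By Theorem \ref{teo:autposi} we may assume (up to sign) that $u>0$ in $\Omega$.

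The second ingredient is that each $u_k$ with $k$ large must change sign. Indeed, a first eigenfunction is, up to the normalization, the unique positive (or negative) eigenfunction by the simplicity Theorem \ref{teo:autoval1}; since $\mu_k \neq \lambda_1(s,p)$, the eigenfunction $u_k$ cannot be of one sign, so both $\Omega_k^+ = \{u_k > 0\}$ and $\Omega_k^- = \{u_k < 0\}$ have positive measure. The key quantitative step is a lower bound on the measure of the nodal domains of an eigenfunction associated with $\mu_k$: testing the weak formulation with $(u_k)_\pm$ and using the elementary inequality $(a-b)^{p-1}((a)_+ - (b)_+) \ge |(a)_+ - (b)_+|^p$ gives
\[
	[(u_k)_\pm]_{\mathcal{W}^{s,p}(\mathbb{R}^{n+m})}^p \le \mu_k \int_\Omega |(u_k)_\pm|^p\,dxdy,
\]
so that $\lambda_1(s,p) \le [(u_k)_\pm]^p / \|(u_k)_\pm\|_{L^p}^p \le \mu_k$, and then, combining with the Sobolev embedding (the subcritical case $sp \le n+m$ handled as in Lemma \ref{lema:cotainfty}, using Hölder on the support of $(u_k)_\pm$; the case $sp > n+m$ being even easier), one obtains a constant $c>0$, independent of $k$, with $|\Omega_k^\pm| \ge c$.

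Now combine these facts. On the one hand $|\Omega_k^+|, |\Omega_k^-| \ge c$ for all large $k$. On the other hand, from $u_k \to u$ in $L^p(\Omega)$ and $u>0$ a.e. in $\Omega$, we get $|\Omega_k^-| = |\{u_k < 0\}| \to 0$: if this failed there would be a set $E \subset \Omega$ of positive measure on which $u_k < 0$ infinitely often while $u > 0$, contradicting $L^p$ (hence a.e. along a further subsequence) convergence. This contradiction with $|\Omega_k^-| \ge c$ shows that no such sequence $\mu_k$ exists, so $\lambda_1(s,p)$ is isolated.

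The main obstacle is the uniform lower bound on the measure of the nodal domains: one must run the Sobolev-plus-Hölder estimate carefully, separately in the three regimes $sp < n+m$, $sp = n+m$, $sp > n+m$, keeping all constants independent of $k$ — this is exactly the technical engine already built in the proof of Lemma \ref{lema:cotainfty} and can be quoted almost verbatim. The rest (weak compactness, identification of the limit as a first eigenfunction, positivity via Theorem \ref{teo:autposi}, and the elementary $L^p$-convergence argument) is routine.
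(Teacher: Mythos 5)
Your proposal is correct and follows essentially the same two-step strategy as the paper: first a uniform lower bound on the measure of the nodal sets $\Omega_k^\pm$ for eigenfunctions with eigenvalue $\mu_k>\lambda_1(s,p)$ (via testing with $(u_k)_\pm$, Sobolev embedding, and H\"older), and then a compactness argument showing a subsequence of normalized eigenfunctions converges to a constant-sign first eigenfunction, which contradicts the lower bound. The only cosmetic difference is the final step: you pass from $L^p$ to a.e.\ convergence and apply a Fatou-type argument to show $|\{u_k<0\}|\to 0$, whereas the paper invokes Egoroff's theorem; both are standard and interchangeable here.
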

	\begin{proof}
		We split the proof into two steps.
		
		{\noindent \it Step 1.} If $u$ is an eigenfunction associated to some 
		eigenvalue $\lambda > \lambda_1(s,p)$ then there is a positive constant $C$ such that
		\begin{equation} \label{macri}
			\left(\dfrac{1}{C\lambda}\right)^{\nicefrac{r}{(r-p)}}
			\le|\Omega_{\pm}|
		\end{equation}
		for all $p<r<p_s^\star.$ Here $\Omega_{\pm}=\{(x,y)\colon
		u_{\pm}\not\equiv0\},$ and
		\[
			p_s^\star= 
				\begin{cases}
						\dfrac{(n+m)p}{n+m-sp}, &\text{ if }sp<n+m,\\
						\infty &\text{ if } sp\ge n+m.
				\end{cases}
		\]
		Let $r\in (p,p_s^\star).$
		By Theorem \ref{teo:compacemb}, Lemmas \ref{lema:Poicare} 
		and  \ref{lema:contincl} and H\"older inequality, we have
		\begin{align*}
			\|u_+\|_{L^r(\Omega)}^p\le C\|u_+\|_{\wsp}^p
			\le C\mathcal{H}_{s,p}(u,u_+)=C\lambda
			\|u_+\|_{L^r(\Omega)}^p|\Omega_{+}|^{\nicefrac{(r-p)}{r}}.
		\end{align*} 
		Then
		\[
			\left(\dfrac{1}{C\lambda}\right)^{\nicefrac{r}{(r-p)}}
			\le|\Omega_{+}|.
		\]
		
		In order to prove the inequality for $|\Omega_{-}|$, 
		it suffices to proceed as above, using the function 
		$-u$ instead of $u.$
		
		{\noindent \it Step 2.} By definition, $\lambda_1(s,p)$
		is left-isolated. To prove that $\lambda_1(s,p)$
		is right-isolated, we argue by contradiction. We assume 
		that there is a sequence of eigenvalues 
		$\{\lambda_k\}_{k\in \mathbb{N}}$ such that 
		$\lambda_k \searrow \lambda_1(s,p)$ as $k\to \infty.$
		Let $u_k$ be an eigenfunction associated to $\lambda_k$ such
		that $\|u_k\|_{\lp}=1.$ Then $\{u_k\}_{k\in \mathbb{N}}$ is
		bounded in $\widetilde{\mathcal{W}}^{s,p}(\Omega)$ and therefore
		we can extract a subsequence (that we still denoted by 
		$\{u_k\}_{k\in \mathbb{N}}$) such that 
		\[
				u_k\rightharpoonup u  \text{ weakly in }
				\widetilde{\mathcal{W}}^{s,p}(\Omega),\qquad
				u_k\to u  \text{ strongly in }
				\lp.
		\]	
		Then $\|u\|_{\lp}=1$ and 
		\[
			[u]^p_{\wwspr}\le\liminf_{k\to\infty}[u_k]^p_{\wwspr}
			=\lim_{k\to\infty}\lambda_k=\lambda_1(s,p).
		\]
		Then $u$ is an eigenfunction associated to $\lambda_1(s,p).$ 
		Therefore $u$ has constant sign.
		
		Now, proceeding as in the proof of \cite[Theorem 2]{Anane},
		we arrive to a contradiction. In fact, by Egoroff's theorem we can find a subset $A_\delta$ 
		of $\Omega$ such that $|A_\delta| < \delta$ and $u_k \to u$ uniformly in $\Omega \setminus A_\delta$. From \eqref{macri} we get that $u$ and the uniform convergence in  $\Omega \setminus A_\delta$ we obtain that $|\{u>0\}| >0$ and $|\{u>0\}| <0$.
		 This contradicts the fact that an eigenfunction associated with the first eigenvalue does not change sign. 
	\end{proof}

\section{The limit as $s\to 1^-$}\label{limites}
	In this section, our goal is to show that
	\begin{equation}\label{eq:sa1}
		\begin{aligned}
			&\lim_{s\to 1^-}(1-s)\lambda_1(s,p)= \lambda_1(1,p)\\
			&=\inf_{ u\in W^{1,p}_0(\Omega), u\not\equiv0}\left\{
			\dfrac{K_{n,p}\displaystyle\int_{\Omega}|\nabla_x u(x,y)|^p dxdy
			+K_{m,p}\int_{\Omega}|\nabla_y u(x,y)|^p dxdy}{\displaystyle
			\|u\|_{\lpp}^p}\right\}
		\end{aligned}
	\end{equation}
	where $K_{n,p}$ is a constant that depends only on $n$ and $p,$ and  
	$K_{m,p}$ depends only on $m$ and $p.$ 
	Before proving \eqref{eq:sa1}, we need some technical results. 
	
	\begin{lem}\label{lema:inclu3}
		Let $\Omega$ be an open subsets of 
	 	$\mathbb{R}^{n+m}$ with smooth boundary and $p\in(1,\infty).$ 
	 	For all $s\in(0,1)$ we have that
		$W^{1,p}(\Omega)$ is continuity embedded  in 
		$\wwsp.$ 
	\end{lem}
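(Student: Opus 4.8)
The plan is to reduce the slice–seminorm $[u]_{\wwspp}$ to an estimate on all of $\mathbb{R}^{n+m}$ by means of a Sobolev extension, and then to control each of the two one–group–of–variables pieces separately by splitting the inner integral according to the size of the increment.

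Since $\|u\|_{\lpp}\le\|u\|_{W^{1,p}(\Omega)}$, it suffices to bound $[u]_{\wwspp}$. As $\partial\Omega$ is smooth, $\Omega$ is a $W^{1,p}$–extension domain, so there is a bounded linear operator $E\colon W^{1,p}(\Omega)\to W^{1,p}(\mathbb{R}^{n+m})$; set $\bar u=Eu$, so that $\bar u=u$ in $\Omega$ and $\|\bar u\|_{W^{1,p}(\mathbb{R}^{n+m})}\le C(\Omega)\|u\|_{W^{1,p}(\Omega)}$. Because $u$ and $\bar u$ agree on $\Omega$ and $\Omega_y\subset\mathbb{R}^n$,
\[
\int_{\Omega}\int_{\Omega_y}\frac{|u(x,y)-u(z,y)|^p}{|x-z|^{n+sp}}\,dz\,dx\,dy
\le\int_{\mathbb{R}^{n+m}}\int_{\mathbb{R}^n}\frac{|\bar u(x,y)-\bar u(z,y)|^p}{|x-z|^{n+sp}}\,dz\,dx\,dy\eqqcolon J_1,
\]
and, symmetrically, the $y$–part of $[u]_{\wwspp}^p$ is bounded by the analogous quantity $J_2$ (with $\mathbb{R}^m$ and the $y$–increment in place of $\mathbb{R}^n$).

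To bound $J_1$ I split the inner region into $\{|x-z|\ge1\}$ and $\{|x-z|<1\}$. On the far part one uses $|a-b|^p\le 2^{p-1}(|a|^p+|b|^p)$, Fubini and $\int_{\{|h|\ge1\}}|h|^{-n-sp}\,dh=\omega_n/(sp)$ to get the bound $(2^p\omega_n/(sp))\|\bar u\|_{L^p(\mathbb{R}^{n+m})}^p$. On the near part, for smooth $\bar u$ (hence by density in general) one writes $\bar u(x,y)-\bar u(z,y)=\int_0^1\nabla_x\bar u(z+t(x-z),y)\cdot(x-z)\,dt$, so by Jensen's inequality $|\bar u(x,y)-\bar u(z,y)|^p\le|x-z|^p\int_0^1|\nabla_x\bar u(z+t(x-z),y)|^p\,dt$; the substitution $h=x-z$ together with translation invariance of Lebesgue measure in $z$ and Fubini yields the bound $\bigl(\int_{\{|h|<1\}}|h|^{p(1-s)-n}\,dh\bigr)\,\|\nabla_x\bar u\|_{L^p(\mathbb{R}^{n+m})}^p$, and $\int_{\{|h|<1\}}|h|^{p(1-s)-n}\,dh=\omega_n/(p(1-s))<\infty$ precisely because $s<1$. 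Thus $J_1\le C(n,p,s)\|\bar u\|_{W^{1,p}(\mathbb{R}^{n+m})}^p$, and by symmetry $J_2\le C(m,p,s)\|\bar u\|_{W^{1,p}(\mathbb{R}^{n+m})}^p$; combined with the extension estimate this gives $\|u\|_{\wwspp}\le C(n,m,p,s,\Omega)\|u\|_{W^{1,p}(\Omega)}$, which is the desired continuous embedding.

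The only genuine obstacle is that the slices $\Omega_y$ (and $\Omega_x$) need not be convex, so the segment from $x$ to $z$ may leave $\Omega_y$ and the fundamental theorem of calculus cannot be applied inside the slice; passing first to the extension $\bar u$ on $\mathbb{R}^{n+m}$ removes this difficulty and, moreover, makes all the subsequent estimates translation invariant, so the constants do not deteriorate with the slice variable. Notice that the constant blows up like $(1-s)^{-1}$ as $s\to1^-$, consistently with the factor $(1-s)$ in the normalization $(1-s)\lambda_1(s,p)$ used in Theorem \ref{teo.s.to.1}.
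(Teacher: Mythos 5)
Your proof is correct and follows essentially the same route as the paper's: extend $u$ to $W^{1,p}(\mathbb{R}^{n+m})$, rewrite the inner integral as an integral over increments $h$, split into $|h|\le 1$ and $|h|>1$, use the translation–difference estimate $\int|\bar u(\cdot+h,\cdot)-\bar u|^p\le|h|^p\int|\nabla_x\bar u|^p$ (which the paper quotes from Brezis, Prop.\ XI.3, and you rederive via the fundamental theorem of calculus plus Jensen) on the near part and the $L^p$ norm on the far part, then sum the two one–group–of–variables contributions. The only cosmetic difference is a constant on the far part ($2^p$ in yours, written as $2$ in the paper, likely a small slip there), which is immaterial for the embedding.
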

	\begin{proof}
		In this proof, we follow the ideas of the proof of 
		\cite[Theorem 1]{Bourgain}.
		Let $u\in W^{1,p}(\Omega).$
		By an extension argument, we can assume that 
		$u\in W^{1,p}(\mathbb{R}^{n+m}).$ 
		We have that
		\begin{equation}\label{eq:inclu31}
			\begin{aligned}
				\int_{\mathbb{R}^{n+m}} |u(x+h,y)-u(x,y)|^p dxdy&\le
				|h|^p\int_{\mathbb{R}^{n+m}} |\nabla_x u(x,y)|^p dxdy,\\
				\int_{\mathbb{R}^{n+m}} |u(x,y+h)-u(x,y)|^p dxdy&\le
				|h|^p\int_{\mathbb{R}^{n+m}} |\nabla_y u(x,y)|^p dxdy.
			\end{aligned}
		\end{equation}
		The proof of this fact can be carried out as that of Proposition XI.3 in 
		\cite{Brezis} and is omitted.	
		
		Then, by \eqref{eq:inclu31}, we have
		\begin{align*}
			\int_{\mathbb{R}^n}&\int_{\mathbb{R}^{n+m}}
			\dfrac{|u(x,y)-u(z,y)|^p}{|x-z|^{n+sp}} dxdydz\\
			&=\int_{\mathbb{R}^n}\int_{\mathbb{R}^{n+m}}
			\dfrac{|u(x+h,y)-u(x,y)|^p}{|h|^{n+sp}} dxdydh\\
			&\le\int_{\{|h|\le 1\}}\dfrac{dh}{|h|^{(s-1)p+n}} 
			\int_{\mathbb{R}^{n+m}} |\nabla_x u(x,y)|^p dxdy\\
			&\qquad+2\int_{\{|h|> 1\}}
			\dfrac{dh}{|h|^{sp+n}}
			\int_{\mathbb{R}^{n+m}} |u(x,y)|^p dxdy\\
			&\le\dfrac{\omega_n}{(1-s)p}
			\int_{\mathbb{R}^{n+m}} |\nabla_x u(x,y)|^p dxdy
			+\dfrac{2\omega_n}{sp}\int_{\mathbb{R}^{n+m}} |u(x,y)|^p dxdy.
		\end{align*}
		Similarly,
		\[
			\begin{aligned}
				\int_{\mathbb{R}^m}\int_{\mathbb{R}^{n+m}}&
			\dfrac{|u(x,y)-u(x,w)|^p}{|y-w|^{m+sp}} dxdydw\\
			&\le\dfrac{\omega_m}{(1-s)p}
			\int_{\mathbb{R}^{n+m}} |\nabla_y u(x,y)|^p dxdy
			+\dfrac{2\omega_m}{sp}\int_{\mathbb{R}^{n+m}} |u(x,y)|^p dxdy,
			\end{aligned}
		\]
		which completes the proof.
	\end{proof}	
	
	\begin{remark}\label{re:inclu3}
		Proceeding as in the proof of previous lemma 
		and using using the Poincar\'e 
		inequality, we have that
		\[
			(1-s)[u]_{\mathcal{W}^{s,p}(
						\mathbb{R}^{n+m})}^p
						\le C \left(1+\dfrac{1}{s}\right)
						\int_{\Omega}|\nabla u|^p \, dxdy
						\qquad\forall u \in W^{1,p}_0(\Omega)
		\]
		where $C$ is a constant independent of $s.$
	\end{remark}
	
	\begin{lem}\label{lema:limites}
		Let $\Omega$ be an open subset of 
	 	$\mathbb{R}^{n+m}$ with smooth boundary and $p\in(1,\infty).$ 
	 	If $u\in W^{1,p}_0(\Omega)$ then
	 	 \[
	 	 	(1-s)[u]_{\mathcal{W}^{s,p}(
						\mathbb{R}^{n+m})}^p\to
						K_{n,p}\int_{\Omega}|\nabla_x u|^p\, dxdy+
						K_{m,p}\int_{\Omega}|\nabla_y u|^p\, dxdy
	 	 \]
	 	 as $s\to 1^-.$
	\end{lem}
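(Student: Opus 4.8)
The plan is to reduce the statement to the classical Bourgain--Brezis--Mironescu limit, applied \emph{separately} in the $x$- and in the $y$-variables, exploiting that $[\cdot]_{\mathcal{W}^{s,p}(\mathbb{R}^{n+m})}$ is a sum of two ``one-group'' seminorms. Extend $u$ by zero so that $u\in W^{1,p}(\mathbb{R}^{n+m})$ with $\nabla u=0$ a.e. in $\Omega^c$, and write
\[
	(1-s)[u]_{\mathcal{W}^{s,p}(\mathbb{R}^{n+m})}^p=(1-s)I_1(s)+(1-s)I_2(s),
\]
where $I_1(s)$ is the double integral with differences in the $x$-variable and $I_2(s)$ the one with differences in the $y$-variable. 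Performing the change of variables $z=x+h$ in $I_1$ and using Fubini,
\[
	I_1(s)=\int_{\mathbb{R}^m}J_s(y)\,dy,\qquad
	J_s(y)\coloneqq\int_{\mathbb{R}^n}\int_{\mathbb{R}^n}
	\dfrac{|u(x,y)-u(x',y)|^p}{|x-x'|^{n+sp}}\,dx'dx .
\]

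For a.e.\ fixed $y\in\mathbb{R}^m$ the slice $x\mapsto u(x,y)$ belongs to $W^{1,p}(\mathbb{R}^n)$, with gradient the corresponding slice of $\nabla_x u$ (the standard slicewise characterization of Sobolev functions, cf.\ \cite{Brezis}). Hence the Bourgain--Brezis--Mironescu theorem in $\mathbb{R}^n$ (cf.\ \cite{Bourgain}) applies and gives, for a.e.\ $y$,
\[
	(1-s)J_s(y)\longrightarrow K_{n,p}\int_{\mathbb{R}^n}|\nabla_x u(x,y)|^p\,dx
	\qquad\text{as }s\to1^-,
\]
with $K_{n,p}>0$ depending only on $n$ and $p$. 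To pass to the limit under the integral over $y$, I would record the slicewise version of the estimate proved in Lemma \ref{lema:inclu3}: splitting the $h$-integral over $\{|h|\le 1\}$ and $\{|h|>1\}$ exactly as there, one gets
\[
	J_s(y)\le\dfrac{\omega_n}{p(1-s)}\int_{\mathbb{R}^n}|\nabla_x u(x,y)|^p\,dx
	+\dfrac{2^p\omega_n}{sp}\int_{\mathbb{R}^n}|u(x,y)|^p\,dx ,
\]
so that for $s\in(\tfrac12,1)$ one has $(1-s)J_s(y)\le\frac{\omega_n}{p}\|\nabla_x u(\cdot,y)\|_{L^p(\mathbb{R}^n)}^p+\frac{2^p\omega_n}{p}\|u(\cdot,y)\|_{L^p(\mathbb{R}^n)}^p\eqqcolon g(y)$, and $g\in L^1(\mathbb{R}^m)$ since $u,\nabla_x u\in L^p(\mathbb{R}^{n+m})$. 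By dominated convergence,
\[
	(1-s)I_1(s)\longrightarrow K_{n,p}\int_{\mathbb{R}^{n+m}}|\nabla_x u|^p\,dxdy
	=K_{n,p}\int_{\Omega}|\nabla_x u|^p\,dxdy .
\]

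Repeating the argument verbatim with the roles of $x$ and $y$ exchanged (change of variables $w=y+k$, Bourgain--Brezis--Mironescu in $\mathbb{R}^m$, and the $y$-slicewise version of Lemma \ref{lema:inclu3}) yields $(1-s)I_2(s)\to K_{m,p}\int_{\Omega}|\nabla_y u|^p\,dxdy$, and adding the two limits gives the claim. The only point requiring genuine care is the interchange of the limit $s\to1^-$ with the integration over the ``frozen'' group of variables: the pointwise-in-$y$ convergence is exactly the classical BBM statement and needs no new argument, whereas the domination must be made uniform in $s$, which is precisely what the slicewise reuse of Lemma \ref{lema:inclu3} provides.
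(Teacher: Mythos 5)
Your proof is correct, and it takes a genuinely more direct route than the paper's. The paper splits the argument into two steps: Case 1 proves the limit for $\phi\in C^\infty_0(\Omega)$ (where slices $\phi(\cdot,y)$ are automatically smooth with compact support, so \cite[Theorem 1]{Bourgain} applies on a ball $B_1$ and the far-field contribution is shown to vanish), followed by Case 2, a density argument in $W^{1,p}_0(\Omega)$ that relies on the uniform-in-$s$ bound of Remark \ref{re:inclu3} and an $\varepsilon/3$-estimate on the functionals $F^u_s, G^u_s$. You collapse these two steps by applying Bourgain--Brezis--Mironescu directly to the slices $u(\cdot,y)$ of a general $u\in W^{1,p}_0(\Omega)$, invoking the slicewise characterization of Sobolev functions (for a.e.\ $y$, $u(\cdot,y)\in W^{1,p}(\mathbb{R}^n)$ with gradient given by the slice of $\nabla_x u$), and then pass to the limit over $y$ by dominated convergence, with a majorant obtained exactly as in Lemma \ref{lema:inclu3}. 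This removes the density step entirely. Two small points worth making explicit: (i) the slicewise Sobolev characterization is standard but is an extra ingredient the paper sidesteps by working with $C^\infty_0$ functions in Case 1; (ii) the BBM statement in \cite{Bourgain} is for bounded smooth domains, so applying it to $J_s(y)$ over all of $\mathbb{R}^n$ should really be phrased as applying it on a ball $B$ compactly containing $\Omega_y$ and verifying, as the paper does in \eqref{eq:limites3}, that the contribution from $B\times B^c$ vanishes after multiplying by $(1-s)$; this is routine since $u(\cdot,y)$ has uniformly compact support. With those two remarks filled in, your argument is a valid and somewhat cleaner alternative to the paper's.
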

	\begin{proof}
		We split the proof into two cases.
		
		{\noindent \it Case 1.} 
		First we prove the lemma for $\phi\in C_0^{\infty}(\Omega).$
		Let $B_1$ and $B_2$ be two open balls in $\mathbb{R}^n$ and 
		$\mathbb{R}^m$ respectively such that $\Omega\subset B_1\times B_2.$
		
		Given $y\in B_2,$ we have that
		\begin{equation}\label{eq:limites1}
			\begin{aligned}
				\int_{\mathbb{R}^n}\int_{\mathbb{R}^n}
				\dfrac{|\phi(x,y)-\phi(z,y)|^p}{|x-z|^{n+sp}}	dxdz&=
				\int_{B_1}\int_{B_1}
				\dfrac{|\phi(x,y)-\phi(z,y)|^p}{|x-z|^{n+sp}}	dxdz\\
				&\quad +2\int_{B_1}\int_{B_1^c}
				\dfrac{|\phi(x,y)|^p}{|x-z|^{n+sp}}	dxdz.
			\end{aligned} 
		\end{equation}
		By \cite[Theorem 1]{Bourgain}, there is a constant $K_{n,p}$
		(that depends only the $n$ and $p$) such that
		\begin{equation}\label{eq:limites2}
				(1-s)\int_{B_1}\int_{B_1}
				\dfrac{|\phi(x,y)-\phi(z,y)|^p}{|x-z|^{n+sp}}	dxdz
				\to K_{n,p}
				\int_{B_1} |\nabla_x \phi(x,y)|^p dx
		\end{equation}
		as $s\to1^-.$
		On the other hand, since 
		$\supp(\varphi)\subset\subset \Omega\subset B_1\times B_2,$ 
		there exists $\delta>0$ such that $|x-z|>\delta$ 
		for all $z\in B_1^c$ and 
		$x\in\{t\in B_1\colon (t,y)\in \supp(\varphi)\}.$ Thus
		\begin{equation}\label{eq:limites3}
			(1-s)\int_{B_1}\int_{B_1^c}
				\dfrac{|\phi(x,y)|^p}{|x-z|^{n+sp}}	dxdz
				\le(1-s)\dfrac{\omega_n}{sp\delta^{sp}}
				\|\phi(\cdot,y)\|_{L^p(B_1)}^p\to0
		\end{equation}
		as $s\to 1^{-}.$ Then by \eqref{eq:limites1},
		\eqref{eq:limites2}, and  \eqref{eq:limites3} we have that 
		\begin{equation}\label{eq:limites4}
				(1-s)\int_{\mathbb{R}^n}\int_{\mathbb{R}^n}
				\dfrac{|\phi(x,y)-\phi(z,y)|^p}{|x-z|^{n+sp}}	dxdz
				\to K_{n,p}
				\int_{B_1} |\nabla_x \phi(x,y)|^p dx
		\end{equation}
		as $s\to 1^-.$
		Proceeding as in the proof of Lemma \ref{lema:inclu3}, we have that
		\begin{align*}
			(1-s)\int_{\mathbb{R}^n}\int_{\mathbb{R}^n}
				\dfrac{|\phi(x,y)-\phi(z,y)|^p}{|x-z|^{n+sp}}	dxdz&\le
				\dfrac{\omega_n}{p}
			\int_{\mathbb{R}^{n}} |\nabla_x \phi(x,y)|^p dxdy\\
			&+(1-s)
			\dfrac{2\omega_n}{s_0p}\int_{\mathbb{R}^{n}} 
			|\phi(x,y)|^p dxdy.
		\end{align*}
		 Thus, \eqref{eq:limites4} and the dominated convergence theorem imply
		 \[
				(1-s)\int_{\mathbb{R}^{n+m}}\int_{\mathbb{R}^n}
				\dfrac{|\phi(x,y)-\phi(z,y)|^p}{|x-z|^{n+sp}}dzdxdy
				\to K_{n,p}
				\int_{\mathbb{R}^m}\int_{B_1} |\nabla_x \phi(x,y)|^p dxdy,
		\]
		as $s\to 1^-,$ that is,
		\begin{equation}\label{eq:limites5}
				(1-s)\int_{\mathbb{R}^{n+m}}\int_{\mathbb{R}^n}
				\dfrac{|\phi(x,y)-\phi(z,y)|^p}{|x-z|^{n+sp}}dzdxdy
				\to K_{n,p}
				\int_{\Omega} |\nabla_x \phi(x,y)|^p dxdy,
		\end{equation}  
		as $s\to1^-.$
		
		In the same manner we can see that 
		there exists a constant $K_{m,p}$ (that depends only the $m$ and $p$) 
		such that
		\begin{equation}\label{eq:limites6}
				(1-s)\int_{\mathbb{R}^{n+m}}\int_{\mathbb{R}^m}
				\dfrac{|\phi(x,y)-\phi(x,w)|^p}{|y-w|^{m+sp}}dwdxdy
				\to K_{m,p}
				\int_{\Omega} |\nabla_y \phi(x,y)|^p dxdy,
		\end{equation}  
		as $s\to1^-.$ 
		
		Then, we have
		\[
	 	 	(1-s)[\phi]_{\mathcal{W}^{s,p}(
						\mathbb{R}^{n+m})}^p\to
						K_{n,p}\int_{\Omega}|\nabla_x \phi|^p\, dxdy+
						K_{m,p}\int_{\Omega}|\nabla_y \phi|^p\, dxdy,
	 	 \]
	 	 as $s\to 1^-.$
	 	 
	 	 {\noindent \it Case 2.} 
	 	 Now we prove the general case. Given $u\in W^{1,p}_0(\Omega),$
	 	 we define  
	 	 \[
	 	 	\begin{aligned}
					F^u_s(x,y,z)&=(1-s)^{\nicefrac1p}\dfrac{|u(x,y)-u(z,y)|}
	 	 					{|x-z|^{\nicefrac{n}p+s}},\\
	 	 			G^u_s(x,y,z)&=(1-s)^{\nicefrac1p}\dfrac{|u(x,y)-u(x,w)|}
	 	 					{|y-w|^{\nicefrac{m}p+s}}
			\end{aligned}
	 	 \]	
	 	 and we want to show that
	 	 \[
	 	 	\begin{aligned}
	 	 		\|F_s^u\|_{L^p(\mathbb{R}^{2n+m})}\to K_{n,p}^{\nicefrac1p}
	 	 		\|\nabla_x u\|_{\lpp},\qquad
	 	 		\|G_s^u\|_{L^p(\mathbb{R}^{n+2m})}\to K_{m,p}^{\nicefrac1p}
	 	 		\|\nabla_y u\|_{\lpp},
	 	 	\end{aligned}
	 	 \]
	 	 as $s\to 1^-.$
	 	 
	 	 Given $\varepsilon>0$ there is
	 	 $\phi\in C^{\infty}_0(\Omega)$ such that
	 	 \[
	 	 	\|\nabla u-\nabla \phi\|_{L^p(\Omega)}<\varepsilon.
	 	 \]
	 	 Thus
	 	 \begin{equation}\label{eq:limites7}
			|\|\nabla_x u\|_{\lpp} - \|\nabla_x \phi \|_{\lp}|
	 	 	<\varepsilon \text{ and } 
	 	 	|\|\nabla_x u\|_{\lpp} - \|\nabla_x \phi \|_{\lp}|
	 	 	<\varepsilon.
		\end{equation}
	 	 
	 	 By case 1, there exists $s_0\in(0,1)$ such that
	 	 \begin{equation}\label{eq:limites8}
			\begin{aligned}
				&|\|F_s^\phi\|_{L^p(\mathbb{R}^{2n+m})}
				-K_{n,p}^{\nicefrac1p}
	 	 		\|\nabla_x \phi\|_{\lp}| < \varepsilon,\\
	 	 		&|\|G_s^\phi\|_{L^p(\mathbb{R}^{n+2m})}
	 	 		-K_{m,p}^{\nicefrac1p}
	 	 		\|\nabla_y \phi\|_{\lp}|<\varepsilon,
			\end{aligned}	
		\end{equation}
	 	 for all $s\in(s_0,1).$
	 	 
	 	 On the other hand, using  Remark \ref{re:inclu3}, we have that
	 	 \begin{equation}\label{eq:limites9}
			\begin{aligned}
				|\|F_s^u\|_{L^p(\mathbb{R}^{2n+m})}-
				\|F_s^\phi\|_{L^p(\mathbb{R}^{2n+m})}|	
				\le C \|\nabla u -\nabla \phi \|_{L^p(\Omega)}
				<C\varepsilon,\\
				|\|G_s^u\|_{L^p(\mathbb{R}^{2n+m})}-
				\|G_s^\phi\|_{L^p(\mathbb{R}^{2n+m})}|	
				\le C \|\nabla u -\nabla \phi \|_{L^p(\Omega)},
				<C\varepsilon,
			\end{aligned}
		\end{equation}
		where $C$ is a constant independent of $s.$
		
		Finally, by \eqref{eq:limites7}, \eqref{eq:limites8}, and
		\eqref{eq:limites9}, we obtain that
		\[
			\begin{aligned}
				&|\|F_s^u\|_{L^p(\mathbb{R}^{2n+m})}
				-K_{n,p}^{\nicefrac1p}
	 	 		\|\nabla_x u\|_{\lpp}| <C\varepsilon,\\
	 	 		&|\|G_s^u\|_{L^p(\mathbb{R}^{n+2m})}
	 	 		-K_{m,p}^{\nicefrac1p}
	 	 		\|\nabla_y u\|_{\lpp}|<C\varepsilon,
			\end{aligned}	
		\]
		and the proof is complete.
	\end{proof}
	
	\begin{cor}\label{cor:limites}
		Let $\Omega$ be an open subset of 
	 	$\mathbb{R}^{n+m}$ with smooth boundary and $p\in(1,\infty).$ 
	 	If $u\in W^{1,p}_0(\Omega)$ then
	 	 \[
	 	 	(1-s)[u]_{\mathcal{W}^{s,p}(\Omega)}^p\to
						K_{n,p}\int_{\Omega}|\nabla_x u|^p\, dxdy+
						K_{m,p}\int_{\Omega}|\nabla_y u|^p\, dxdy
	 	 \]
	 	 as $s\to 1^-.$	
	\end{cor}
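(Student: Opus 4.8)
The plan is to deduce the corollary from Lemma~\ref{lema:limites}, which already establishes the desired convergence for the seminorm $[\widetilde u]_{\mathcal{W}^{s,p}(\mathbb{R}^{n+m})}$, where $\widetilde u$ denotes the extension of $u$ by zero outside $\Omega$ (note $\widetilde u\in W^{1,p}(\mathbb{R}^{n+m})$ since $u\in W^{1,p}_0(\Omega)$, with $\nabla\widetilde u=\widetilde{\nabla u}$). Thus it is enough to show
\[
	(1-s)\left([\widetilde u]_{\mathcal{W}^{s,p}(\mathbb{R}^{n+m})}^p
	-[u]_{\mathcal{W}^{s,p}(\Omega)}^p\right)\longrightarrow 0
	\qquad\text{as }s\to1^-.
\]
I would first prove this for $\phi\in C_0^\infty(\Omega)$, setting $\delta=\operatorname{dist}(\supp\phi,\Omega^c)>0$. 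Splitting the outer integrals in $[\widetilde\phi]_{\mathcal{W}^{s,p}(\mathbb{R}^{n+m})}^p$ over $\Omega$ and $\Omega^c$, and the inner integrals over the slices $\Omega_y$, $\Omega_x$ and their complements, the terms in which both arguments lie in $\Omega$ reassemble exactly into $[\phi]_{\mathcal{W}^{s,p}(\Omega)}^p$; in every other term one argument lies in $\Omega^c$ while the other (carrying the only possibly nonzero value of $\phi$) lies in $\supp\phi$, so these two points are at distance at least $\delta$. Integrating out that far variable, each such error term is bounded by $\frac{\omega_n}{sp}\delta^{-sp}\|\phi\|_{L^p(\Omega)}^p$ or $\frac{\omega_m}{sp}\delta^{-sp}\|\phi\|_{L^p(\Omega)}^p$, so after multiplication by $(1-s)$ they tend to $0$, and the corollary for $\phi$ then follows from Lemma~\ref{lema:limites}.

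For general $u\in W^{1,p}_0(\Omega)$ I would argue by density, mimicking Case~2 of the proof of Lemma~\ref{lema:limites}. Given $\varepsilon>0$, pick $\phi\in C_0^\infty(\Omega)$ with $\|\nabla u-\nabla\phi\|_{L^p(\Omega)}<\varepsilon$. Since the domains of integration defining $[v]_{\mathcal{W}^{s,p}(\Omega)}$ are contained in those defining $[\widetilde v]_{\mathcal{W}^{s,p}(\mathbb{R}^{n+m})}$, one has $[v]_{\mathcal{W}^{s,p}(\Omega)}\le[\widetilde v]_{\mathcal{W}^{s,p}(\mathbb{R}^{n+m})}$; applied to $v=u-\phi$ together with the uniform bound of Remark~\ref{re:inclu3}, this controls $\left|(1-s)^{1/p}[u]_{\mathcal{W}^{s,p}(\Omega)}-(1-s)^{1/p}[\phi]_{\mathcal{W}^{s,p}(\Omega)}\right|$ by $C\varepsilon$ uniformly for $s$ close to $1$ (using the reverse triangle inequality in $L^p$ exactly as in \eqref{eq:limites9}). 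Combining this with the convergence already proven for $\phi$, with the estimates comparing $\|\nabla_x u\|_{\lp},\|\nabla_y u\|_{\lp}$ to $\|\nabla_x\phi\|_{\lp},\|\nabla_y\phi\|_{\lp}$ as in \eqref{eq:limites7}, and letting $\varepsilon\to0$, gives the claim.

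I do not expect a serious obstacle, since the analytic core has been done in Lemma~\ref{lema:limites}; the only point needing a little care is the bookkeeping of the ``boundary'' error terms in the reduction step and the observation that $\delta=\operatorname{dist}(\supp\phi,\Omega^c)$ gives a bound uniform in the slice variable — which it does, because $\supp\phi$ is compact in $\Omega$.
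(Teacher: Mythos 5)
Your proposal follows the paper's proof essentially verbatim: reduce to showing $(1-s)\bigl([\widetilde u]^p_{\mathcal{W}^{s,p}(\mathbb{R}^{n+m})}-[u]^p_{\mathcal{W}^{s,p}(\Omega)}\bigr)\to 0$, establish this for $\phi\in C_0^\infty(\Omega)$ by bounding the two cross terms (with one argument in $\supp\phi$ and the other in $\Omega^c_y$ resp.\ $\Omega^c_x$) via $\delta=\operatorname{dist}(\supp\phi,\Omega^c)$ to get $\tfrac{\omega_n}{sp\delta^{sp}}\|\phi\|_{L^p}^p$ and $\tfrac{\omega_m}{sp\delta^{sp}}\|\phi\|_{L^p}^p$, and then pass to general $u\in W^{1,p}_0(\Omega)$ by density exactly as in Case 2 of Lemma~\ref{lema:limites} using Remark~\ref{re:inclu3}. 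Your extra observation that $[v]_{\mathcal{W}^{s,p}(\Omega)}\le[\widetilde v]_{\mathcal{W}^{s,p}(\mathbb{R}^{n+m})}$ is a small but useful clarification of the density step that the paper leaves implicit.
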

	\begin{proof}
		By Lemma \ref{lema:limites}, we only need to proof that if 
		$u\in W^{1,p}_0(\Omega)$ then
		\[
	 	 	(1-s)\left([u]_{\mathcal{W}^{s,p}(\mathbb{R}^{n+m})}^p
	 	 	-[u]_{\mathcal{W}^{s,p}(\Omega)}^p\right)\to
						0
	 	 \]
	 	 as $s\to 1^-.$	
		First we prove the result for $\phi\in C_0^{\infty}(\Omega).$
		We have
		\begin{equation}\label{eq:limites10}
			\begin{aligned}
				\left([\phi]_{\mathcal{W}^{s,p}(\mathbb{R}^{n+m})}^p
		 	 	-[\phi]_{\mathcal{W}^{s,p}(\Omega)}^p\right)&=
		 	 	2\int_{\supp(\phi)}\int_{\Omega_y^c}
		 	 	\dfrac{|\phi(x,y)|}{|x-z|^{n+sp}}^p dzdxdy\\
		 	 	&\qquad+
		 	 	2\int_{\supp(\phi)}\int_{\Omega_x^c}
		 	 	\dfrac{|\phi(x,y)|}{|y-w|^{m+sp}}^p dwdxdy.
			\end{aligned}
		\end{equation}
		Since $\supp(\phi)\subset\Omega$ is compact, there exists 
		$\delta>0$ such that $|x-z|>\delta$ and $|y-w|>\delta$ for all
		$(x,y)\in \supp(\phi),$  $z\in\Omega_y^c,$ $w\in \Omega_x^c.$ 	
		Then
		\begin{align*}
			\int_{\supp(\phi)}\int_{\Omega_y^c}
	 	 	\dfrac{|\phi(x,y)|}{|x-z|^{n+sp}}^p dzdxdy
	 	 	&\le \dfrac{\omega_n}{sp\delta^{sp}}\int_{\Omega}
	 	 	|\phi(x,y)|^pdxdy,
	 	 	\\
	 	 	\int_{\supp(\phi)}\int_{\Omega_y^c}
	 	 	\dfrac{|\phi(x,y)|}{|y-w|^{m+sp}}^p dwdxdy
	 	 	&\le \dfrac{\omega_m}{sp\delta^{sp}}\int_{\Omega}
	 	 	|\phi(x,y)|^pdxdy.
		\end{align*}
		Therefore, using \eqref{eq:limites10}, we have that  
		\[
	 	 	(1-s)\left([\phi]_{\mathcal{W}^{s,p}(\mathbb{R}^{n+m})}^p
	 	 	-[\phi]_{\mathcal{W}^{s,p}(\Omega)}^p\right)\to
						0
	 	 \]
	 	 as $s\to 1^-.$	
	 	 
	 	 The argument for the general case is analogous to the one performed in case 2 in the proof of 
	 	 Lemma \ref{lema:limites}. 
	\end{proof}
	
	For the proof of the following lemma, see  \cite[Lemma 2]{Bourgain}.
	\begin{lem}\label{lema:Blema2}
		Let $\delta>0$ and $g,h\colon(0,\delta)\to(0,+\infty).$
		Assume that $g(t)\le g(\nicefrac{t}2)$ and that $h$ in non-increasing.
		Then
		\[
			\int_{0}^\delta t^{N-1}g(t)h(t)\, dt\ge\dfrac{N}{(2\delta)^{N}}
			\int_{0}^{\delta}t^{N-1}g(t) dt\int_0^{\delta}t^{N-1}h(t) dt
		\]
		for all $N>0.$ 
	\end{lem}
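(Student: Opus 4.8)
My plan is to reduce the inequality, via the layer--cake decomposition of the non-increasing function $h$, to a single scaling estimate for the weighted primitive of $g$,
\[
	\psi(r)\coloneqq\int_0^r t^{N-1}g(t)\,dt,\qquad r\in(0,\delta].
\]
Since $h>0$ is non-increasing, for each $\lambda>0$ the superlevel set $\{t\in(0,\delta)\colon h(t)>\lambda\}$ is an interval with left endpoint $0$ and right endpoint $\rho(\lambda)\coloneqq\sup\{t\colon h(t)>\lambda\}\in[0,\delta]$, and $h(t)=\int_0^\infty\mathbf{1}_{\{h(t)>\lambda\}}\,d\lambda$. Plugging this into both integrals and applying Tonelli's theorem (everything is non-negative) I get
\[
	\int_0^\delta t^{N-1}g(t)h(t)\,dt=\int_0^\infty\psi(\rho(\lambda))\,d\lambda
	\quad\text{and}\quad
	\int_0^\delta t^{N-1}h(t)\,dt=\frac1N\int_0^\infty\rho(\lambda)^N\,d\lambda .
\]
Thus the lemma reduces to the pointwise bound
\begin{equation}\label{eq:psiclaim}
	\psi(r)\ \ge\ \frac1{2^N}\Big(\frac r\delta\Big)^N\,\psi(\delta)
	\qquad\text{for all }r\in(0,\delta],
\end{equation}
which is exactly the special case $h=\mathbf{1}_{(0,r)}$ of the lemma.

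To establish \eqref{eq:psiclaim} I would first prove the scaling estimate: for $0<\rho\le\delta/2$, the substitution $t=2\tau$ together with the hypothesis $g(2\tau)\le g(\tau)$ gives
\[
	\psi(2\rho)=2^N\!\int_0^\rho\tau^{N-1}g(2\tau)\,d\tau\ \le\ 2^N\!\int_0^\rho\tau^{N-1}g(\tau)\,d\tau=2^N\psi(\rho).
\]
Iterating yields $\psi(2^{j}r)\le 2^{jN}\psi(r)$ whenever $2^{j}r\le\delta$. Given $r\in(0,\delta]$ I choose the integer $j=\lfloor\log_2(\delta/r)\rfloor$, so that $\delta/2<2^{j}r\le\delta$; then, using that $\psi$ is non-decreasing and applying the scaling estimate once more with $\rho=\delta/2$,
\[
	\psi(\delta)\ \le\ 2^N\psi(\delta/2)\ \le\ 2^N\psi(2^{j}r)\ \le\ 2^{(j+1)N}\psi(r),
\]
and because $2^{j}\le\delta/r$ this gives $\psi(r)\ge 2^{-(j+1)N}\psi(\delta)\ge 2^{-N}(r/\delta)^N\psi(\delta)$, which is \eqref{eq:psiclaim}.

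The only real content is the scaling estimate together with carrying the constant $2^N$ correctly through the dyadic iteration; the rounding $j=\lfloor\log_2(\delta/r)\rfloor$ and the extra step $\psi(\delta)\le 2^N\psi(\delta/2)$ are precisely what produce the sharp factor $(2\delta)^{-N}$. The layer--cake reduction and the interchange of integrals are routine. (If $\psi(\delta)=+\infty$ the scaling estimate forces $\psi\equiv+\infty$ and both sides of the lemma are infinite, so one may harmlessly assume $\psi(\delta)<\infty$; in the intended application $g$ is continuous and all the integrals are finite in any case.)
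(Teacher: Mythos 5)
Your proof is correct. The paper does not give its own argument for this lemma; it simply refers to \cite[Lemma 2]{Bourgain}, so there is no in-paper proof to compare against. Your route — the layer-cake decomposition of the non-increasing $h$, reducing the inequality to the pointwise bound $\psi(r)\ge (r/(2\delta))^{N}\psi(\delta)$ for $\psi(r)=\int_0^r t^{N-1}g(t)\,dt$, which is then obtained by dyadic iteration of the one-step estimate $\psi(2\rho)\le 2^{N}\psi(\rho)$ (valid for $2\rho\le\delta$, using $g(2\tau)\le g(\tau)$) — is the standard way to prove this statement and is, in substance, the argument in the cited reference. The bookkeeping is done correctly: the choice $j=\lfloor\log_2(\delta/r)\rfloor$ together with the one extra halving step $\psi(\delta)\le 2^{N}\psi(\delta/2)$ is precisely what produces the constant $N/(2\delta)^{N}$, and your aside about the case $\psi(\delta)=+\infty$ adequately covers the integrability question (since $h>0$ forces $\rho(\lambda)>0$ on a set of positive measure, both sides are then infinite).
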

	
	\begin{lem}\label{lema:incluentres}
		Let $0<s_0<s$ and $u\in\widetilde{\mathcal{W}}^{s,p}(\Omega).$
		Then
		\[
			\dfrac{(1-s_0)[u]_{\mathcal{W}^{s_0,p}(\Omega)}^p}
			{2^{(1-s_0)p}\diam(\Omega)^{(s-s_0)p}}
			\le(1-s)[u]_{\mathcal{W}^{s,p}(\mathbb{R}^{n+m})}^p
		\]
	\end{lem}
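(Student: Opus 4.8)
The plan is to handle the two ``directional'' pieces of the seminorms separately, reducing each to a one-variable inequality to which Lemma~\ref{lema:Blema2} applies. We may assume $u\not\equiv0$ (otherwise the statement is trivial), we regard $u$ as extended by zero to $\mathbb{R}^{n+m}$, and we set $D\coloneqq\diam(\Omega)$. Consider first the pieces containing differences in the variable $x\in\mathbb{R}^{n}$. Passing to polar coordinates $z=x+t\sigma$, $t>0$, $\sigma\in S^{n-1}$, and setting
\[
	g_0(t)\coloneqq\int_{\mathbb{R}^{n+m}}\int_{S^{n-1}}|u(x,y)-u(x+t\sigma,y)|^p\,d\sigma\,dx\,dy,\qquad g(t)\coloneqq\frac{g_0(t)}{t^{p}},
\]
a direct computation (substitute $z=x+h$, pass to polars, and apply Tonelli) gives
\[
	\int_{\mathbb{R}^{n+m}}\int_{\mathbb{R}^n}\frac{|u(x,y)-u(z,y)|^p}{|x-z|^{n+sp}}\,dz\,dx\,dy=\int_0^{+\infty}t^{(1-s)p-1}g(t)\,dt ,
\]
while, since $(x,y)\in\Omega$ and $z\in\Omega_y$ force $|x-z|\le D$, enlarging $\Omega_y$ to $B^n(x,D)$ and then the outer domain to $\mathbb{R}^{n+m}$ (the integrand being nonnegative) yields
\[
	\int_{\Omega}\int_{\Omega_y}\frac{|u(x,y)-u(z,y)|^p}{|x-z|^{n+s_0p}}\,dz\,dx\,dy\le\int_0^{D}t^{(1-s_0)p-1}g(t)\,dt .
\]
If the left-hand side of the first identity is infinite there is nothing to prove, so we may assume it is finite; one also checks that $0<g_0(t)\le 2^{p}\omega_n\|u\|_{L^p(\mathbb{R}^{n+m})}^p<+\infty$, so that $g$ is a positive function on $(0,D)$.

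The crucial point, and the step I expect to be the main obstacle, is that $g$ satisfies the doubling hypothesis $g(t)\le g(t/2)$ of Lemma~\ref{lema:Blema2}. Splitting the difference at the midpoint, $u(x,y)-u(x+t\sigma,y)=\bigl[u(x,y)-u(x+\nicefrac{t}{2}\,\sigma,y)\bigr]+\bigl[u(x+\nicefrac{t}{2}\,\sigma,y)-u(x+t\sigma,y)\bigr]$, using $|a+b|^p\le 2^{p-1}(|a|^p+|b|^p)$, and exploiting the translation invariance of the $dx$-integral in the second summand (substitute $x\mapsto x-\nicefrac{t}{2}\,\sigma$), one obtains $g_0(t)\le 2^{p}g_0(t/2)$; dividing by $t^{p}$ and using $g(t/2)=2^{p}g_0(t/2)/t^{p}$ gives $g(t)\le g(t/2)$. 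The normalisation by $t^{p}$ in the definition of $g$ is precisely what absorbs the constant $2^{p}$, and integrating over all of $\mathbb{R}^{n+m}$ is precisely what makes the translation substitution available; by contrast, a naive pointwise-in-$(x,y)$ comparison of the weights $|x-z|^{-n-s_0p}$ and $|x-z|^{-n-sp}$ would fail, because the constant it produces, $\nicefrac{(1-s_0)}{(1-s)}$, blows up as $s\to1^{-}$.

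With this in hand, I would apply Lemma~\ref{lema:Blema2} with $\delta=D$, $N=(1-s_0)p>0$, the function $g$ above, and $h(t)=t^{-(s-s_0)p}$, which is positive and non-increasing since $s>s_0$. Because $t^{N-1}g(t)h(t)=t^{(1-s)p-1}g(t)$ and $\int_0^{D}t^{N-1}h(t)\,dt=\int_0^{D}t^{(1-s)p-1}\,dt=\frac{D^{(1-s)p}}{(1-s)p}$, the lemma gives, after a short simplification,
\[
\begin{aligned}
	\int_0^{D}t^{(1-s)p-1}g(t)\,dt
	&\ge\frac{(1-s_0)p}{(2D)^{(1-s_0)p}}\cdot\frac{D^{(1-s)p}}{(1-s)p}\int_0^{D}t^{(1-s_0)p-1}g(t)\,dt\\
	&=\frac{1-s_0}{(1-s)\,2^{(1-s_0)p}D^{(s-s_0)p}}\int_0^{D}t^{(1-s_0)p-1}g(t)\,dt .
\end{aligned}
\]
Since the left-hand side here is $\le\int_0^{+\infty}t^{(1-s)p-1}g(t)\,dt$ and $\int_0^{D}t^{(1-s_0)p-1}g(t)\,dt$ dominates the $x$-piece of $[u]_{\mathcal{W}^{s_0,p}(\Omega)}^p$, rearranging yields
\[
\begin{aligned}
	\frac{1-s_0}{2^{(1-s_0)p}D^{(s-s_0)p}}&\int_{\Omega}\int_{\Omega_y}\frac{|u(x,y)-u(z,y)|^p}{|x-z|^{n+s_0p}}\,dz\,dx\,dy\\
	&\le(1-s)\int_{\mathbb{R}^{n+m}}\int_{\mathbb{R}^n}\frac{|u(x,y)-u(z,y)|^p}{|x-z|^{n+sp}}\,dz\,dx\,dy .
\end{aligned}
\]
The same argument applied verbatim to the pieces containing differences in $y\in\mathbb{R}^{m}$ (with $S^{m-1}$ in place of $S^{n-1}$, and $\Omega_x\subset B^m(y,D)$ for $(x,y)\in\Omega$) gives the analogous inequality for the second terms of the two seminorms. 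Adding the two inequalities and recalling $D=\diam(\Omega)$ proves the lemma.
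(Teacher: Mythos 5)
Your proof is correct and follows the same route as the paper: pass to polar coordinates in the $x$ (resp.\ $y$) variable, restrict the radial variable $t$ to $(0,\diam(\Omega))$ to dominate the $\Omega$-integral, and then apply Lemma~\ref{lema:Blema2} with $N=(1-s_0)p$, $\delta=\diam(\Omega)$, $g(t)=t^{-p}g_0(t)$, and $h(t)=t^{-(s-s_0)p}$. The one point worth noting is that you explicitly verify the doubling hypothesis $g(t)\le g(t/2)$ (via the midpoint split, convexity, and the translation invariance of the $\mathbb{R}^{n+m}$-integral, which is precisely what makes the normalization by $t^p$ absorb the factor $2^p$); the paper invokes Lemma~\ref{lema:Blema2} without commenting on this hypothesis, so your proof actually closes a small gap that the paper leaves implicit.
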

	\begin{proof}
		Let $B_1$ and $B_2$ be two balls in $\mathbb{R}^n$ and $\mathbb{R}^m$	
		respectively such that $\Omega\subset B_1\times B_2$ and
		$\diam(B_1)=\diam(B_2)=\diam(\Omega).$ Then
		\begin{align*}
			&\int_{\mathbb{R}^{n+m}}\int_{\mathbb{R}^n}
			\dfrac{|u(x,y)-u(z,y)|^{p}}{|x-z|^{n+sp}} dzdxdy \ge\\
			&\ge\int_{\mathbb{R}^{m}}
			\int_0^{\infty}\int_{S^{n-1}}\int_{\mathbb{R}^n}
			\dfrac{|u(x+tw,y)-u(x,y)|^{p}}{t^{1+sp}} dxd\sigma dt dy\\ 
			&\ge \int_{\mathbb{R}^{m}}
			\int_0^{\diam(\Omega)}\int_{S^{n-1}}
			t^{(1-s_0)p-1}
			\int_{\mathbb{R}^n}
			\dfrac{|u(x+t\omega,y)-u(x,y)|^{p}}{t^p}
			\dfrac{ dxd\sigma dt dy}{t^{(s-s_0)p}}\\ 
		\end{align*}
		Taking $N=(1-s_0)p,$ $\delta=\diam(\Omega),$ we get
		\[
			g(t)=\int_{S^{n-1}}
			\int_{\mathbb{R}^m}
			\dfrac{|u(x+t\omega,y)-u(x,y)|^{p}}{t^p} dxd\sigma, \qquad
			\mbox{ and } \qquad h(t)=\dfrac{1-s}{t^{(s-s_0)p}}.
		\]
		By Lemma \ref{lema:Blema2}, we have that
		\begin{align*}
			&(1-s)\int_{\mathbb{R}^{n+m}}\int_{\mathbb{R}^n}
			\dfrac{|u(x,y)-u(z,y)|^{p}}{|x-z|^{n+sp}} dzdxdy \ge\\
			&\ge
			\dfrac{(1-s_0)p}{2^{(1-s_0)p}\diam(\Omega)^{(1-s_0)p}}
			\int_{\mathbb{R}^{m}}
			\int_0^{\delta} t^{(1-s_0)p-1}g(t) dt
			\int_0^{\delta} t^{(1-s_0)p-1}h(t) dt\\ 
			&\ge
			\dfrac{(1-s_0)p}{2^{(1-s_0)p}\diam(\Omega)^{(1-s_0)p}}
			\int_{\mathbb{R}^{m}}
			\int_0^{\delta} t^{(1-s_0)p-1}g(t) dt
			\int_0^{\delta} (1-s)t^{(1-s)p-1}dt\\ 
			&\ge
			\dfrac{(1-s_0)}{2^{(1-s_0)p}\diam(\Omega)^{(s-s_0)p}}
			\int_{\mathbb{R}^{m}}
			\int_0^{\delta} 
			\int_{S^{n-1}}
			\int_{\mathbb{R}^m}
			\dfrac{|u(x+t\omega,y)-u(x,y)|^{p}}{t^{1+s_0p}} dxd\sigma dtdy\\
			&\ge \dfrac{(1-s_0)}{2^{(1-s_0)p}\diam(\Omega)^{(s-s_0)p}}
			\int_{\Omega}\int_{\Omega_y}
			\dfrac{|u(x,y)-u(z,y)|^{p}}{|x-z|^{n+s_0p}} dzdxdy. 
		\end{align*}
		
		Similarly
		\begin{align*}
			(1-s)\int_{\mathbb{R}^{n+m}}\int_{\mathbb{R}^n}&
			\dfrac{|u(x,y)-u(x,w)|^{p}}{|y-w|^{m+sp}} dzdxdy \ge\\
			&\ge \dfrac{(1-s_0)}{2^{(1-s_0)p}\diam(\Omega)^{(s-s_0)p}}
			\int_{\Omega}\int_{\Omega_x}
			\dfrac{|u(x,y)-u(z,y)|^{p}}{|y-w|^{m+s_0p}} dwdxdy. 
		\end{align*}
		This concludes the proof.
	\end{proof}
	We can now show the main result of this section.
	\begin{teo}\label{teo:limites}
		Let $\Omega$ is bounded domain in
		$\mathbb{R}^{n+m}$ with smooth boundary, $s\in(0,1)$ and 
		$p\in(1,\infty).$ Then
		\[
			\lim_{s\to 1^-}(1-s)\lambda_1(s,p)= \lambda_1(1,p).
		\]
	\end{teo}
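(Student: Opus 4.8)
The plan is to establish the two one-sided bounds $\limsup_{s\to 1^-}(1-s)\lambda_1(s,p)\le\lambda_1(1,p)$ and $\liminf_{s\to 1^-}(1-s)\lambda_1(s,p)\ge\lambda_1(1,p)$ separately, in the style of a $\Gamma$-convergence argument, using the variational characterizations \eqref{eq:autovalor} and \eqref{eq:sa1} together with the technical lemmas of this section.

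For the upper bound I would fix an arbitrary $u\in W^{1,p}_0(\Omega)$ with $u\not\equiv0$. Its extension by zero lies in $W^{1,p}(\mathbb{R}^{n+m})$, so Lemma \ref{lema:inclu3} gives $u\in\widetilde{\mathcal{W}}^{s,p}(\Omega)$ for every $s\in(0,1)$, hence $u$ is admissible in \eqref{eq:autovalor} and $(1-s)\lambda_1(s,p)\le(1-s)[u]_{\mathcal{W}^{s,p}(\mathbb{R}^{n+m})}^p/\|u\|_{\lp}^p$. Letting $s\to1^-$ and invoking Lemma \ref{lema:limites}, the right-hand side tends to $(K_{n,p}\|\nabla_x u\|_{\lp}^p+K_{m,p}\|\nabla_y u\|_{\lp}^p)/\|u\|_{\lp}^p$, and taking the infimum over such $u$ yields the upper bound.

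For the lower bound I would set $L=\liminf_{s\to1^-}(1-s)\lambda_1(s,p)$, which is finite by the step above, choose $s_k\to1^-$ with $(1-s_k)\lambda_1(s_k,p)\to L$, and take eigenfunctions $u_k$ for $\lambda_1(s_k,p)$ normalized by $\|u_k\|_{\lp}=1$ and extended by zero. Fixing a box $Q=B_1\times B_2\supset\overline{\Omega}$, with $B_1\subset\mathbb{R}^n$ and $B_2\subset\mathbb{R}^m$, and a small $s_0\in(0,1)$, Lemma \ref{lema:incluentres} applied on $Q$ gives, for $k$ large, $(1-s_0)[u_k]_{\mathcal{W}^{s_0,p}(Q)}^p\le 2^{(1-s_0)p}\diam(Q)^{(s_k-s_0)p}(1-s_k)\lambda_1(s_k,p)$, a bound uniform in $k$. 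Since $Q$ is a product, Lemma \ref{lema:contincl} then bounds $u_k$ in $W^{s_0,p}(Q)$, and Theorem \ref{teo:compacemb} extracts a subsequence with $u_k\to u$ strongly in $L^p(Q)$ and a.e., so $\|u\|_{\lp}=1$ and $u=0$ a.e. in $Q\setminus\Omega$. Passing to the limit in the displayed inequality with Fatou's lemma on the seminorm kernels yields $(1-s_0)[u]_{\mathcal{W}^{s_0,p}(Q)}^p\le 2^{(1-s_0)p}\diam(Q)^{(1-s_0)p}L$ for every $s_0\in(0,1)$. Decomposing $[u]_{\mathcal{W}^{s_0,p}(Q)}^p$ into the $x$-slices $\int_{B_2}|u(\cdot,y)|_{W^{s_0,p}(B_1)}^p\,dy$ and the analogous $y$-slices, applying the Bourgain--Brezis--Mironescu characterization (\cite{Bourgain}) to $u(\cdot,y)$ and $u(x,\cdot)$ on a.e. slice, and using Fatou once more as $s_0\to1^-$, the finiteness of the bound forces $\nabla_x u,\nabla_y u\in\lp$ with $K_{n,p}\|\nabla_x u\|_{\lp}^p+K_{m,p}\|\nabla_y u\|_{\lp}^p\le L$. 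As $u\in W^{1,p}(Q)$ vanishes a.e. outside the smooth domain $\Omega$, it belongs to $W^{1,p}_0(\Omega)$ and its gradients are supported in $\Omega$, whence $\lambda_1(1,p)\le K_{n,p}\|\nabla_x u\|_{\lp}^p+K_{m,p}\|\nabla_y u\|_{\lp}^p\le L$, which is the lower bound.

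The hard part will be this last step: recovering that the $L^p$-limit $u$ genuinely lies in $W^{1,p}_0(\Omega)$ with the correct energy is not a soft lower-semicontinuity statement but rests on the nontrivial converse part of the Bourgain--Brezis--Mironescu theorem, namely that finiteness of $\liminf_{s\to1^-}(1-s)[v]_{W^{s,p}}^p$ implies $v\in W^{1,p}$, and this must be applied slicewise in each of the two groups of variables and then reassembled via Fatou's lemma. Everything else — the compactness and the upper bound — reduces to chaining Lemmas \ref{lema:inclu3}, \ref{lema:limites}, \ref{lema:incluentres} and \ref{lema:contincl} together, the only genuine extra care being to keep the $x$- and $y$-contributions separated throughout the argument.
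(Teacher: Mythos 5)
Your upper bound coincides with the paper's. Your lower bound is correct in substance but runs along a genuinely different track. The paper, having bounded $(1-s_k)[u_k]_{\mathcal{W}^{s_k,p}(\mathbb{R}^{n+m})}^p$, passes through Lemma~\ref{lema:contincl} to a bound on $(1-s_k)|u_k|_{W^{s_k,p}(\mathbb{R}^{n+m})}^p$ with $s_k\to1$ and then invokes the Bourgain--Brezis--Mironescu compactness theorem (\cite[Corollary~7]{Bourgain}) to conclude \emph{directly} that a subsequence converges in $L^p$ to a limit $u\in W^{1,p}_0(\Omega)$; it then applies the forward limit formula of Corollary~\ref{cor:limites} (already established for functions known to lie in $W^{1,p}_0(\Omega)$) to identify $\lim_{s_0\to1^-}(1-s_0)[u]_{\mathcal{W}^{s_0,p}(\Omega)}^p$ with the local energy. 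You instead secure compactness at a fixed $s_0$ via Lemma~\ref{lema:contincl} on a product box plus Theorem~\ref{teo:compacemb}, so that at first you only know $u\in L^p$ and $u=0$ outside $\Omega$; you then recover both $u\in W^{1,p}_0(\Omega)$ and the energy bound by the \emph{converse} part of BBM applied slicewise in $x$ and in $y$, reassembled by two Fatou arguments (one in $k$, one in $s_0$) and the subadditivity of $\liminf$ for the sum of the two contributions, followed by the Nikodym/ACL characterization of $W^{1,p}$ and the fact that a $W^{1,p}$ function vanishing a.e.\ outside a smooth domain lies in $W^{1,p}_0(\Omega)$. This is sound, but it imports machinery (slicewise converse BBM, ACL reassembly) that the paper's preparatory lemmas do not set up, whereas the paper's route is more economical: it lets the BBM compactness theorem deliver the $W^{1,p}_0$ membership in one stroke and then reuses Corollary~\ref{cor:limites}, which is the ``easy'' forward direction of BBM already proved for $W^{1,p}_0(\Omega)$ functions. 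Neither route has a gap; the paper's is shorter given its infrastructure, yours is more self-contained in that it avoids citing the BBM compactness result but at the price of a more intricate slicewise argument.
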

	\begin{proof}
		First, we observe that, from Lemma \ref{lema:inclu3}, 
		if $u\in W^{1,p}_0(\Omega)$ then 
		$u\in \widetilde{\mathcal{W}}^{s,p}(\Omega).$ 
		Then
		\[
			(1-s)\lambda_1(s,p)\le\dfrac{[u]_{
			\mathcal{W}^{s,p}(\mathbb{R}^{n+m})}^p}
			{\|u\|_{L^p(\Omega)}^p}
		\]
		for all $u\in W^{1,p}_0(\Omega),$ $u\not\equiv0.$ 
		Therefore, by Lemma \ref{lema:limites}, we have that
		\[
			\limsup_{s\to 1^-}(1-s)\lambda_1(s,p)\le
			\dfrac{K_{n,p}\displaystyle\int_{\Omega}
			|\nabla_x u(x,y)|^p dxdy
			+K_{m,p}\int_{\Omega}|\nabla_y u(x,y)|^p dxdy}{\displaystyle
			\|u\|_{\lpp}^p}
		\]
		for all $u\in W^{1,p}_0(\Omega),$ $u\not\equiv0.$ Then
		\begin{equation}\label{eq:tlimites1}
				\limsup_{s\to 1^-}(1-s)\lambda_1(s,p)\le\lambda_1(1,p).
		\end{equation}
		
		 To finish the proof, we have to show that
		 \[
				\liminf_{s\to 1^-}(1-s)\lambda_1(s,p)\ge\lambda_1(1,p).
		 \]
		 
		 Let $\{s_k\}_{k\in \mathbb{N}}\subset(0,1)$ be such that
		 $s_k\to1$ as $k\to \infty,$
		 \begin{equation}\label{eq:tlimites2}
			\lim_{k\to \infty}
				(1-s_k)\lambda_1(s_k,p)
				=\liminf_{s\to 1^-}(1-s)\lambda_1(s,p).	
		\end{equation}
		For each $k\in \mathbb{N},$ we let $u_k$ be an 
		eigenfunction corresponding to $\lambda_1(s_k,p)$ such that
		$\|u_k\|_{L^p(\Omega)}=1.$ By \eqref{eq:tlimites2}, there is
		a positive constant $C$ such that
		\[
			(1-s_k)[u_k]_{\mathcal{W}^{s_k,p}(\mathbb{R}^{n+m})}^p\le C
			\qquad \forall k\in \mathbb{N}.
		\]
		Then, by Lemma \ref{lema:contincl}, there is
		a positive constant $C$ such that
		\[
			(1-s_k)|u_k|_{W^{s_k,p}(\mathbb{R}^{n+m})}^p\le C
			\qquad \forall k\in \mathbb{N}.
		\]
		Thus, by \cite[Corollary 7]{Bourgain}, up to a subsequence,
		$\{u_k\}_{k\in\mathbb{N}}$ converges in $\lp$ to some 
		$u\in W^{1,p}_{0}(\Omega).$ Moreover, for all $\delta>0$,
		$	u_k\to u \mbox{ strongly in } W^{1-\delta,p}(\Omega)$.
		Therefore $\|u\|_{\lp}=1.$
		
		Let $s_0\in(0,1).$ Since $s_k\to 1,$ there exists $k_0\in
		\mathbb{N}$ such that $s_0<s_k$ for all $k\ge k_0.$ Then,
		by Lemma \ref{lema:incluentres}, we have that
		\begin{align*}
			\dfrac{(1-s_0)[u_k]_{\mathcal{W}^{s_0,p}(\Omega)}^p}
			{2^{(1-s_0)p}}
			&\le
			\diam(\Omega)^{(s_k-s_0)p}(1-s_k)
			[u_k]_{\mathcal{W}^{s_k,p}(\mathbb{R}^n)}^p\\
			&=
			\diam(\Omega)^{(s_k-s_0)p}(1-s_k)
			\lambda_1(s_k,p).
		\end{align*}
		Thus, by \eqref{eq:tlimites2} and Fatou's lemma, we get
		\begin{equation}\label{eq:tlimites3}
			\dfrac{(1-s_0)[u]_{\mathcal{W}^{s_0,p}(\Omega)}^p}
			{2^{(1-s_0)p}}\le \diam(\Omega)^{(1-s_0)p}
			\liminf_{s\to 1^-}(1-s)\lambda_1(s,p).
		\end{equation}
		By Corollary \ref{cor:limites}, it holds that
		\begin{align*}
			K_{n,p}\displaystyle\int_{\Omega}
			|\nabla_x u(x,y)|^p dxdy
			+K_{m,p}\int_{\Omega}|\nabla_y u(x,y)|^p dxdy & =
			\lim_{s_0\to 1^-}
			\dfrac{(1-s_0)[u]_{\mathcal{W}^{s_0,p}(\Omega)}^p}
			{2^{(1-s_0)p}}\\
			&\le \liminf_{s\to 1^-}(1-s)\lambda_1(s,p).
		\end{align*}
		Then
		\[
			\lambda_1(1,p)\le \liminf_{s\to 1^-}(1-s)\lambda_1(s,p).
		\]
		Therefore, by \eqref{eq:tlimites1},
		\[
			\lambda_1(1,p)= \lim_{s\to 1^-}(1-s)\lambda_1(s,p),
		\]
		as we wanted to prove.
	\end{proof}

\section{The limit as $p\to \infty$}\label{limitep}

 	Now we want to pass to the limit as $p\to \infty$ in the first 
 	eigenvalue $\lambda_{1}(s,p)$.
	Our goal now is to show that 
	\[
		[\lambda_1(s,p)]^{\nicefrac1p}\to \Lambda_{\infty} (s) 
	\]
	where
	\[
		\Lambda_{\infty} (s)=\inf
		\left\{[u]_{\mathcal{W}^{s,\infty}(\mathbb{R}^{n+m})}\colon 
		u\in \mathcal{W}^{s,\infty}(\mathbb{R}^{n+m}),  
		\|u\|_{L^\infty (\Omega)}=1, u=0 \mbox{ in }\Omega^c\right\}.
	\]
	Observe that, by Arzela-Ascoli's theorem, the previous infimum is attained.

	We first prove a geometric characterization of $\Lambda_\infty (s)$.

	\begin{lem} \label{lema.Lambda.infty} Let
	$
		R_s= \displaystyle
		\max_{(x,y) \in \Omega} \displaystyle
		\min_{(z,w) \in \partial \Omega}  
		(|x-z|^{s}+|y-w|^{s}),
	$
	then
	$$
		\Lambda_\infty (s) =  \frac{1}{R_s}.
	$$
	\end{lem}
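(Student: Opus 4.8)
The plan is to establish the two inequalities $\Lambda_\infty(s)\ge 1/R_s$ and $\Lambda_\infty(s)\le 1/R_s$ separately. Both rest on the elementary subadditivity $(a+b)^s\le a^s+b^s$ for $a,b\ge0$ and $s\in(0,1)$ (i.e. $t\mapsto t^s$ is subadditive), together with the fact that the anisotropic seminorm controls differences along a two--leg path: for any $u$ with $[u]_{\mathcal{W}^{s,\infty}(\mathbb{R}^{n+m})}<\infty$,
\[
	|u(x,y)-u(z,w)|\le|u(x,y)-u(z,y)|+|u(z,y)-u(z,w)|\le[u]_{\mathcal{W}^{s,\infty}(\mathbb{R}^{n+m})}\bigl(|x-z|^{s}+|y-w|^{s}\bigr).
\]
In particular every admissible $u$ is continuous on $\mathbb{R}^{n+m}$.

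For the lower bound I would take an arbitrary admissible $u$ and, using continuity and compactness of $\overline\Omega$, pick $(x_0,y_0)\in\overline\Omega$ with $|u(x_0,y_0)|=\|u\|_{L^\infty(\Omega)}=1$; replacing $u$ by $-u$ if necessary we may assume $u(x_0,y_0)=1$, and since $u\equiv0$ on $\Omega^{c}\supseteq\partial\Omega$ this forces $(x_0,y_0)\in\Omega$. Choosing $(z_0,w_0)\in\partial\Omega$ that realizes $\min_{(z,w)\in\partial\Omega}(|x_0-z|^{s}+|y_0-w|^{s})\le R_s$ and noting $u(z_0,w_0)=0$, the displayed two--leg estimate gives
\[
	1=u(x_0,y_0)-u(z_0,w_0)\le[u]_{\mathcal{W}^{s,\infty}(\mathbb{R}^{n+m})}\bigl(|x_0-z_0|^{s}+|y_0-w_0|^{s}\bigr)\le[u]_{\mathcal{W}^{s,\infty}(\mathbb{R}^{n+m})}\,R_s ,
\]
so $[u]_{\mathcal{W}^{s,\infty}(\mathbb{R}^{n+m})}\ge1/R_s$; taking the infimum over admissible $u$ yields $\Lambda_\infty(s)\ge1/R_s$.

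For the upper bound I would produce an explicit competitor. Set $v(x,y)=\min_{(z,w)\in\partial\Omega}(|x-z|^{s}+|y-w|^{s})$ for $(x,y)\in\overline\Omega$, and let $u=v/R_s$ on $\overline\Omega$ and $u\equiv0$ on $\Omega^{c}$; this is well defined and continuous because $v=0$ on $\partial\Omega$, and $\|u\|_{L^\infty(\Omega)}=(\max_{\overline\Omega}v)/R_s=1$. It remains to check that the zero extension does not increase the anisotropic seminorm, i.e. $|u(x,y)-u(z,y)|\le|x-z|^{s}/R_s$ and the analogous bound in the other group of variables. For two points of $\overline\Omega$: if $(a,b)\in\partial\Omega$ realizes $v(z,y)$, then by the triangle inequality and subadditivity $v(x,y)\le|x-a|^{s}+|y-b|^{s}\le|x-z|^{s}+|z-a|^{s}+|y-b|^{s}=|x-z|^{s}+v(z,y)$, and symmetrically, so $|v(x,y)-v(z,y)|\le|x-z|^{s}$. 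If $(x,y)\in\Omega$ and $(z,y)\in\Omega^{c}$, the segment $t\mapsto((1-t)x+tz,y)$ leaves $\Omega$, hence meets $\partial\Omega$ at a first exit point $(z',y)$ with $|x-z'|\le|x-z|$, so $v(x,y)\le|x-z'|^{s}\le|x-z|^{s}$ while $u(z,y)=0$; the case of two points in $\Omega^{c}$ is trivial. Therefore $\Lambda_\infty(s)\le[u]_{\mathcal{W}^{s,\infty}(\mathbb{R}^{n+m})}\le1/R_s$, and combining the two bounds gives $\Lambda_\infty(s)=1/R_s$.

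The one point that requires care is the ``one point inside $\Omega$, one outside'' case in the upper bound, where one must see that extending $v/R_s$ by zero across $\partial\Omega$ does not spoil the anisotropic H\"older estimate; this is exactly what the first--exit--point argument on the segment handles, while every other step is a direct application of subadditivity of $t\mapsto t^{s}$.
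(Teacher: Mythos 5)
Your proof is correct, and it splits into the same two halves as the paper's. The lower bound is essentially the paper's argument (the two--leg path $(x_0,y_0)\to(z,y_0)\to(z,w)$ and the anisotropic H\"older estimate), with two small refinements: you run it for an arbitrary admissible $u$ rather than the extremal, so you never need to invoke the existence of a minimizer, and you explicitly pass to $-u$ to arrange $u(x_0,y_0)=+1$, a point the paper leaves implicit. The upper bound is where you genuinely diverge: you take as competitor the normalized ``$s$--distance to the boundary,'' $u=v/R_s$ with $v(x,y)=\min_{(z,w)\in\partial\Omega}(|x-z|^s+|y-w|^s)$, extended by zero, and check the anisotropic seminorm by a three--case analysis (both points in $\overline\Omega$; one in, one out, handled by the first--exit--point on the segment; both out). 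The paper instead uses the single cone $u(x,y)=\bigl(1-\frac{|x_0-x|^s+|y_0-y|^s}{R_s}\bigr)_+$ centered at the max--min point $(x_0,y_0)$; for that competitor the seminorm bound is a one--liner from $|a^s-b^s|\le|a-b|^s$, and the fact that it vanishes on $\Omega^c$ follows from the same first--exit--point observation (the cone's support $\{|x_0-x|^s+|y_0-y|^s<R_s\}$ lies inside $\Omega$), though the paper does not spell that out. Your competitor is the more canonical object and makes admissibility transparent at the cost of more bookkeeping; the paper's cone minimizes the verification. Both are sound.
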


	\begin{proof}
	 	Let $u\in \mathcal{W}^{s,\infty}(\mathbb{R}^{n+m}),$  
		such that 
		$\|u\|_{L^\infty (\Omega)}=1,$ $u=0 \mbox{ in }\Omega^c$
		and
		$
			\Lambda_\infty (s) =[u]_{\mathcal{W}^{s,\infty}
			(\mathbb{R}^{n+m})}$.	
			Then, let $(x_0,y_0) \in \Omega$ be such that
	$
		u (x_0,y_0) =1. 
	$
	If $(z,w) \in \partial \Omega$ we have
	$$
		|u(x_0,y_0)-u(z,y_0)| \leq \Lambda_\infty (s) |x_0-z|^{s}
	$$
	and
	$$
		|u(z,y_0)-u(z,w)| \leq \Lambda_\infty (s) |y_0-w|^{s}.
	$$
	Then
	$$
		|u(x_0,y_0)-u(z,w)| 
		\leq \Lambda_\infty (s) (|x_0-z|^{s}+|y_0-w|^{s}).
	$$
	Therefore,
	$$
		1 \leq \Lambda_\infty (s) \min_{(z,w) 
		\in \partial \Omega} (|x_0-z|^{s}+|y_0-w|^{s}),
	$$
	and then, we get 
	\begin{equation} \label{ppp}
		\Lambda_\infty (s) \geq  
		\frac{1}{\displaystyle\min_{(z,w) \in \partial \Omega}  
		(|x_0-z|^{s}+|y_0-w|^{s})}
		\ge \frac{1}R_s.
	\end{equation}

	Now, we choose $(x_0,y_0)$ that solves the geometric maximization problem 
	$$
		R_s= \max_{(x,y) \in \Omega} \min_{(z,w) \in \partial \Omega}  
		(|x-z|^{s}+|y-w|^{s}),
	$$
	and consider the function
	$$
		u(x,y) = \left(1 - \frac{|x_0-x|^{s}+|y_0-y|^{s}}{R_s}\right)_+.
	$$
Observe that, $\|u\|_{L^{\infty}(\Omega)}=1.$
		On the other hand, since for any $s\in(0,1]$
	\[
		|a^s-b^s|\le |a-b|^s \quad \forall a,b\in[0,\infty),
	\]
	we have that $[u]_{\mathcal{W}^{s,\infty}(\mathbb{R}^{n+m})}\le\nicefrac{1}{R_s}.$
	Hence, using this functions as a test function in the variational problem 
	defining $\Lambda_\infty (s) $ we get 
	\begin{equation} \label{kkk}
		\Lambda_\infty (s) \leq \frac{1}{R_s}.
	\end{equation}

From \eqref{ppp} and \eqref{kkk} we obtain the desired result. 
\end{proof}

	\begin{lem} \label{lema.conv.unif.autov} Let $u_{p}$ be a positive 
	eigenfunction for $\lambda_{1}(s,p)$ normalized according to 
	$\|u_{p}\|_{L^p (\Omega)}=1$. Then, there exists a sequence 
	$p_j \to \infty$ such that
	\[
		u_j \to u
	\]
	uniformly in ${\mathbb{R}}^N$. 
	This limit function $u$ belongs to the space 
	$\mathcal{W}^{s,\infty}(\Omega)$ and is a solution to 
	the variational problem
	\begin{align*}
	\Lambda_\infty (s) 
		&
		= \min
		\left\{[u]_{\mathcal{W}^{s,\infty}(\Omega)}\colon 
		u\in \mathcal{W}^{s,\infty}(\Omega),  
		\|u\|_{L^\infty (\Omega)}=1, u=0 \mbox{ on }\partial\Omega
		\right\}.
	\end{align*}
	In addition, it holds that
	$$
	[\lambda_{1}(s,p)]^{1/p} \to \Lambda_\infty (s).
	$$
\end{lem}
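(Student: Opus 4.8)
The plan is to prove the statement in three stages: an upper bound $\limsup_{p\to\infty}[\lambda_1(s,p)]^{\nicefrac1p}\le\Lambda_\infty(s)$ obtained by testing the variational characterization \eqref{eq:autovalor}; a compactness argument producing the uniformly convergent subsequence $u_{p_j}\to u$; and a matching lower bound that simultaneously forces the full limit to be $\Lambda_\infty(s)$ and shows that $u$ is extremal for $\Lambda_\infty(s)$.

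\emph{Upper bound.} First I would check that if $v$ is Lipschitz, compactly supported in $\Omega$ and $\|v\|_{L^\infty(\Omega)}=1$, then $([v]_{\wwspr})^{\nicefrac1p}\to[v]_{\mathcal{W}^{s,\infty}(\mathbb{R}^{n+m})}=:M$ as $p\to\infty$. For the $\limsup\le M$ direction one splits each Gagliardo integral at the $p$-independent scale $\rho_*=(M/L)^{\nicefrac1{(1-s)}}$ ($L$ a Lipschitz constant of $v$), using the Lipschitz bound for $|x-z|<\rho_*$, the bound $|v(x,y)-v(z,y)|\le M|x-z|^{s}$ for $\rho_*\le|x-z|\le D:=\operatorname{diam}(\supp v)$, and $|v|\le 1$ for $|x-z|>D$; this yields $[v]_{\wwspr}^p\le C_vM^p+C_v'D^{-sp}$ with $C_v,C_v'$ independent of $p$, and since $D^{-s}\le M$ one gets $\limsup_p([v]_{\wwspr})^{\nicefrac1p}\le M$ (the reverse inequality being the easy local argument of the third stage). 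As $\|v\|_{L^p(\Omega)}\to 1$, testing \eqref{eq:autovalor} with such $v$ gives $\limsup_p[\lambda_1(s,p)]^{\nicefrac1p}\le[v]_{\mathcal{W}^{s,\infty}(\mathbb{R}^{n+m})}$. Finally, approximating the explicit extremal $\left(1-(|x_0-x|^{s}+|y_0-y|^{s})/R_s\right)_+$ of Lemma~\ref{lema.Lambda.infty} from inside (truncate at a small level $\delta$ so that the support becomes compactly contained in $\Omega$, then mollify and renormalize) shows that the infimum of $[v]_{\mathcal{W}^{s,\infty}(\mathbb{R}^{n+m})}$ over admissible Lipschitz $v$ equals $\Lambda_\infty(s)$, whence $\limsup_p[\lambda_1(s,p)]^{\nicefrac1p}\le\Lambda_\infty(s)$.

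\emph{Compactness.} Since $\|u_p\|_{L^p(\Omega)}=1$, \eqref{eq:DP1} gives $[u_p]_{\wwspr}=[\lambda_1(s,p)]^{\nicefrac1p}$, which is bounded for large $p$ by the previous stage. Fix $t\in(0,s)$ and $q>(n+m)/t$. Writing $|x-z|^{-(n+tq)}=\bigl(|x-z|^{-(n+sp)}\bigr)^{q/p}\,|x-z|^{-n(1-q/p)+(s-t)q}$ and applying H\"older with exponents $p/q$ and $p/(p-q)$ on $\Omega\times\Omega_y$ (and similarly in the $y$-variable), one bounds $[u_p]_{\mathcal{W}^{t,q}(\Omega)}$ by $[u_p]_{\wwspr}$ times $\bigl(\int_{\Omega}\int_{\Omega_y}|x-z|^{-n+(s-t)qp/(p-q)}\bigr)^{(p-q)/p}$, whose integrand has exponent integrable over the bounded domain and which stays bounded as $p\to\infty$. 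With $\|u_p\|_{L^q(\Omega)}\le|\Omega|^{\nicefrac1q-\nicefrac1p}$, the family $\{u_p\}$ is bounded in $\mathcal{W}^{t,q}(\Omega)$; since $tq>n+m$, Lemma~\ref{lema:contincl} and Theorem~\ref{teo:compacemb} yield a compact embedding into $C^{0,\alpha}(\overline\Omega)$, so along a subsequence $p_j\to\infty$ one has $u_{p_j}\to u$ uniformly on $\overline\Omega$, hence on $\mathbb{R}^{n+m}$ (all functions vanishing off $\Omega$), with $u\ge 0$ and $u=0$ in $\Omega^c$. Comparing $L^{p_j}$ and $L^\infty$ norms ($1=\|u_{p_j}\|_{L^{p_j}}^{p_j}\le\|u_{p_j}\|_{L^\infty}^{p_j}|\Omega|$ for a lower bound, and noting that $\limsup_j\|u_{p_j}\|_{L^\infty}>1$ would, by uniform convergence, force $u_{p_j}\ge 1+\eta$ on a fixed ball $B$ and hence $\|u_{p_j}\|_{L^{p_j}}^{p_j}\ge(1+\eta)^{p_j}|B|\to\infty$) gives $\|u_{p_j}\|_{L^\infty}\to 1$, so $\|u\|_{L^\infty(\Omega)}=1$ and $u$ is admissible in the problem defining $\Lambda_\infty(s)$.

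\emph{Lower bound and conclusion.} The local estimate is: if $|u(x_1,y_1)-u(z_1,y_1)|\ge a\,|x_1-z_1|^{s}$ for some $x_1\ne z_1$ (or the analogous inequality in the $y$-variable), then by uniform convergence and continuity there is a fixed ball $B$ in the variables $(x,z,y)$ around $(x_1,z_1,y_1)$ on which, for $j$ large, $|u_{p_j}(x,y)-u_{p_j}(z,y)|\ge(a-\varepsilon)|x-z|^{s}$, whence
\[
[u_{p_j}]_{\wwspr}^{\,p_j}\ \ge\ \int_B\frac{|u_{p_j}(x,y)-u_{p_j}(z,y)|^{p_j}}{|x-z|^{n+sp_j}}\,dx\,dz\,dy\ \ge\ (a-\varepsilon)^{p_j}\,c_B ,
\]
with $c_B>0$ independent of $j$, so taking $p_j$-th roots, $\liminf_j[u_{p_j}]_{\wwspr}\ge a-\varepsilon$. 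Letting $\varepsilon\to 0$ and supremizing over such pairs, $\liminf_j[u_{p_j}]_{\wwspr}\ge[u]_{\mathcal{W}^{s,\infty}(\mathbb{R}^{n+m})}$; in particular this seminorm is finite, being $\le\limsup_p[\lambda_1(s,p)]^{\nicefrac1p}\le\Lambda_\infty(s)$. Since $u$ is admissible, $[u]_{\mathcal{W}^{s,\infty}(\mathbb{R}^{n+m})}\ge\Lambda_\infty(s)$; hence $[u]_{\mathcal{W}^{s,\infty}(\mathbb{R}^{n+m})}=\Lambda_\infty(s)$, $u$ solves the variational problem, and $[\lambda_1(s,p_j)]^{\nicefrac1{p_j}}\to\Lambda_\infty(s)$. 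Because every subsequence of $[\lambda_1(s,p)]^{\nicefrac1p}$ admits a further subsequence along which this reasoning applies, the full limit equals $\Lambda_\infty(s)$.

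\emph{Main obstacle.} The delicate point is the upper bound, precisely the inequality $\limsup_p([v]_{\wwspr})^{\nicefrac1p}\le[v]_{\mathcal{W}^{s,\infty}(\mathbb{R}^{n+m})}$. The crude pointwise bound $|v(x,y)-v(z,y)|^p/|x-z|^{n+sp}\le M^p|x-z|^{-n}$ is \emph{not} integrable near the diagonal, and indeed the extremal of $\Lambda_\infty(s)$ is only $s$-H\"older near its peak and fails to belong to $\wwspr$ for large $p$; hence one cannot test with it directly and must (a) split at the $p$-independent scale $\rho_*$ exploiting the extra Lipschitz regularity near the diagonal and (b) argue by density after restricting the admissible class to mollified, compactly supported Lipschitz functions. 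Keeping all the constants in (a) genuinely independent of $p$ is the technical heart of the argument.
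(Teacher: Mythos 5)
Your proposal is correct in substance, but it replaces the paper's argument in two of its three stages, so it is worth comparing. For the upper bound the paper does not work with general Lipschitz competitors: it plugs the explicit one-parameter family $u_\alpha(x,y)=\bigl(1-(|x-x_0|^{\alpha s}+|y-y_0|^{\alpha s})/R_{s\alpha}\bigr)_+$, $1<\alpha<\nicefrac1s$, into \eqref{eq:autovalor}; this function is $(\alpha s)$-H\"older, hence lies in $\widetilde{\mathcal{W}}^{s,p}(\Omega)$ for every $p$, and the elementary inequality $(R_s)^{\alpha}\le 2^{\alpha-1}R_{s\alpha}$ gives $[u_\alpha]_{\mathcal{W}^{s,\infty}}\le 2^{\alpha-1}\diam(\Omega)^{(\alpha-1)s}R_s^{-\alpha}$, after which one lets $\alpha\to1^+$. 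Your route instead proves $\limsup_p([v]_{\mathcal{W}^{s,p}(\mathbb{R}^{n+m})})^{\nicefrac1p}\le[v]_{\mathcal{W}^{s,\infty}(\mathbb{R}^{n+m})}$ for compactly supported Lipschitz $v$ via the splitting at the $p$-independent scale $\rho_*$, and then approximates the cone by truncation, mollification and renormalization; note that truncation alone does not remove the $s$-H\"older cusp at the peak, so the mollification (which does not increase the anisotropic seminorm) is the essential step there, and one should also record, as in Lemma \ref{lema.Lambda.infty}, that $\{|x-x_0|^{s}+|y-y_0|^{s}<R_s\}\subset\Omega$ so the truncated cone is compactly supported in $\Omega$. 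Both regularizations are valid: the paper's power regularization is shorter and avoids the density step, yours is more flexible since it does not rely on the explicit form of the extremal. For the lower bound the paper bounds fixed-$q$ Gagliardo quotients of $u_{p_j}$ by $[u_{p_j}]_{\mathcal{W}^{s,p_j}}$ via H\"older, passes to the limit with Fatou, and then sends $q\to\infty$; your pointwise localization near a near-extremal pair is more elementary and equally correct, and it also yields the full-limit statement cleanly. The compactness stage is essentially the paper's (interpolation to a fractional space with fixed exponents $t,q$, $tq>n+m$, then the Morrey-type compact embedding of Theorem \ref{teo:compacemb}); two small repairs: Lemma \ref{lema:contincl} is stated for product domains, so the passage from the anisotropic to the isotropic seminorm should be made on $\mathbb{R}^{n+m}$ (or on a product box containing $\Omega$) before restricting, and carrying out the interpolation on such a box for the zero-extensions is also what justifies uniform convergence on all of $\mathbb{R}^{n+m}$ rather than merely on $\overline{\Omega}$ — both points the paper itself glosses in the same way.
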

	
	\begin{proof} 
	Let $\alpha>1$ and
	$$
		R_{s\alpha}= 
		\max_{(x,y) \in \Omega} \min_{(z,w) \in \partial \Omega}  
		(|x-z|^{s\alpha}+|y-w|^{s\alpha}).
	$$
	\noindent We first claim that
	\begin{equation}\label{eq:rsa}
			\dfrac{(R_s)^{\alpha}}{2^{\alpha-1}}\le R_{s\alpha}
	\end{equation}
	for any $\alpha>1.$ To this end, let $(x_0,y_0)\in \Omega$ such that
	\[
		R_s= \min_{(z,w) \in \partial \Omega}  (|x_0-z|^{s}+|y_0-w|^{s}).
	\]
	Then for all $(z,w)\in\partial \Omega$ we have
	\begin{align*}
		(R_s)^\alpha&\le\left(|x_0-z|^s+|y_0-w|^s\right)^\alpha
		\le 2^{\alpha-1}\left(|x_0-z|^{s\alpha}+|y_0-w|^{s\alpha}\right)\\
		&\le2^{\alpha-1}R_{s\alpha,}
	\end{align*}
	that is, \eqref{eq:rsa}.
	On the other hand, it is clear that if $s\alpha<1$ we have that
	\[
		u_\alpha(x,y)=\left(1-\dfrac{|x-x_0|^{\alpha s}+|y-y_0|^{\alpha s}}
		{R_{s\alpha}}\right)_+
	\]
	belongs to $\widetilde{\mathcal{W}}^{s,p}(\Omega)$ for all $p>1.$ Then
	\begin{equation}\label{eq:rsa1}
		(\lambda_{1}(s,p))^{1/p}\le\dfrac{[u_{\alpha}]_{\mathcal{W}^{s,p}
		(\mathbb{R}^{n+m})}}{\|u_{\alpha}\|_{L^p(\Omega)}}	
	\end{equation}
	for all $p>1$ and $1<\alpha<\nicefrac1s.$ Therefore
	\[
		\limsup_{p\to\infty}(\lambda_{1}(s,p))^{1/p}
		\le \dfrac{[u_{\alpha}]_{\mathcal{W}^{s,\infty}
		(\Omega)}}{\|u_{\alpha}\|_{L^\infty(\Omega)}} 
		\quad\forall \alpha\in(1,\nicefrac1s).
	\]
	Observe that if $\alpha\in(1,\nicefrac1s),$ by
	\eqref{eq:rsa}, we have
	\[
		\dfrac{|u_{\alpha}(x,y)-u_{\alpha}(z,y)|}{|x-z|^s}
		\le\dfrac{|x-z|^{(\alpha-1)s}}{R_{s\alpha}}
		\le 2^{\alpha-1}\dfrac{\text{diam}(\Omega)^{(\alpha-1)s}}
		{(R_s)^{\alpha}} 
	\]
	for all $(x,y)\neq(z,y)\in\overline{\Omega},$ and
	\[
		\dfrac{|u_{\alpha}(x,y)-u_{\alpha}(x,w)|}{|y-w|^s}
		\le\dfrac{|y-w|^{(\alpha-1)s}}{R_{s\alpha}}
		\le 2^{\alpha-1}\dfrac{\diam(\Omega)^{(\alpha-1)s}}
		{(R_s)^{\alpha}},
	\]
	for all $(x,y)\neq(z,y)\in\overline{\Omega},$ that is
	\[
		[u_{\alpha}]_{\mathcal{W}^{s,\infty}
		(\Omega)}\le 2^{\alpha-1}\dfrac{\diam(\Omega)^{(\alpha-1)s}}
		{(R_s)^{\alpha}}.
	\]
	Then, by
	\eqref{eq:rsa1} we get
	\[
		\limsup_{p\to\infty}(\lambda_{1}(s,p))^{\nicefrac1p}\le
		2^{\alpha-1}\dfrac{\diam(\Omega)^{(\alpha-1)s}}
		{(R_s)^{\alpha}} \qquad \alpha\in(1,\nicefrac1s),
	\]
	since $\|u_\alpha\|_{L^{\infty}(\Omega)}=1.$ Therefore,
	passing to the limit as $\alpha\to 1$ in the previous inequality
	we get
	\begin{equation}\label{eq:limt1}
		\limsup_{p\to\infty}(\lambda_{1}(s,p))^{
		\nicefrac1p}\le
		\dfrac{1}
		{R_s}=\Lambda_{\infty} (s).
	\end{equation}
	
	Our next goal is to show that
	\[
		\Lambda_{\infty} (s)\le
		\liminf_{p\to\infty}(\lambda_{1}(s,p))^{\nicefrac1{p}}.
	\]
	Let $p_j>1$ be such that
	\begin{equation}\label{eq:inf}
		\liminf_{p\to\infty}(\lambda_{1}(s,p))^{\nicefrac1{p}}
		=\lim_{j\to\infty}(\lambda_{1}(s,p_j))^{\nicefrac1{p_j}}.
	\end{equation}
	By \eqref{eq:limt1}, without of loss of generality, we can assume 
	\begin{equation}\label{eq:vie}
		(\lambda_{1}(s,p_j))^{\nicefrac1{p_j}}
		=[u_{p_j}]_{\mathcal{W}^{s,p_j}(\mathbb{R}^{n+m})}
		\le \Lambda_{\infty} (s) + \epsilon \qquad \forall j\in\mathbb{N},
	\end{equation}
	where $u_{p_j}$ is an eigenfunction for $\lambda_{1}(s,p_j)$ 
	normalized according to $\|u_{p_j}\|_{L^{p_j} (\Omega)}=1$ and $\epsilon$ is any positive number. 
	Then, by 
	Lemma \ref{lema:contincl}, we have that there exists a constant $C,$
	independent of $j,$ such that 
	\[
		|u_{p_j}|_{W^{s,p_j}(\Omega)}\le C\qquad \forall j\in\mathbb{N}.
	\]
	Therefore, for any $j\in\mathbb{N}$ there exists a constant 
	$C$ independent of $j,$
	such that
	\begin{equation}\label{eq:cota}
			\|u_{p_j}\|_{W^{s,p_j}(\Omega)}\le C.
	\end{equation}

	On the other hand, given $q>1$ such that $sq>2(n+m)$ 
	and taking  $t=s-\nicefrac{n+m}q,$  
	by H\"older's inequality, for any $p_j>q$ we 
	have that
	\[
		\|u_{p_j}\|_{L^q(\Omega)}^q\le |\Omega|^{1
		-\frac{q}{p_j}}\|u_{p_j}\|_{L^p(\Omega)}^q
		=|\Omega|^{1-\frac{q}{p_j}},
	\]
	and
	\begin{align*}
		|u_{p_j}|_{W^{t,q}(\Omega)}^q&=
		\int_{\Omega^2}\dfrac{|u_{p_j}(x,y)-
		u_{p_j}(z,w)|^q}{|(x,y)-(z,w)|^{sq}}\, dxdydzdw \\
		&\le |\Omega|^{2(1-\frac{q}{p_j})}	
		\left(\int_{\Omega^2}
		\dfrac{|u_{p_j}(x,y)-u_{p_j}(z,w)|^{p_j}}{|(x,y)-(z,w)|^{sp_j}}
		\, dxdydzdw\right)^{\frac{q}{p_j}}\\
		&\le|\Omega|^{2(1-\frac{q}{p_j})}\max\left\{1,\diam(\Omega)^{(n+m)
		\frac{q}{p_j}}\right\}	
		|u_{p_j}|_{W^{s,p_j}(\Omega)}^q.
	\end{align*}
	Hence, by \eqref{eq:cota}, for $j$ large there exists a constant
	$C,$ independent of $j,$ such that
	\[
		\|u_{p_j}\|_{W^{t,q}(\Omega)}\le C \max
		\left\{|\Omega|^{\frac1q-\frac1{p_j}},
		|\Omega|^{2(\frac1q-\frac1{p_j})},
		|\Omega|^{2(\frac1q-\frac1{p_j})}\diam(\Omega)^{\frac{n+m}{p_j}}
		\right\},
	\]
	that is, there exists $j_0>1$ such that $\{u_{p_j}\}_{j>j_0}$ is bounded in
	$W^{t,q}(\Omega).$ Then, since $tq>n+m,$ by Theorem \ref{teo:compacemb},
	there exists a subsequence $\{u_{k}\}_{k\in\mathbb{N}}$ of 
	$\{u_{p_j}\}_{j>j_0}$ and 
	a function $u\in C^{0,\gamma}(\overline{\Omega})$
	($0<\gamma<t-\nicefrac{(n+m)}q$) such that
	$
		u_k \to u
	$
	uniformly in $\overline{\Omega}.$
	
	Thus, if $q>1$ there exists $k_0\in\mathbb{N}$ such that
	$p_k>q$ if $k>k_0$ and therefore, by H\"older's inequality, for any
	$k>k_0$ we have
	\begin{align*}
		&\left(\int_{\Omega}\int_{\Omega_y}
		\dfrac{|u_k(x,y)-u_k(z,y)|^q}{|x-z|^{qs}} dzdxdy\right)^q \\
		&\le C^{\frac{1}{q}-\frac{1}{p_k}}
		\max\left\{1,\diam(\Omega)^{\frac{n}{p_k}}
		\right\}
		\left(
		\int_{\Omega}\int_{\Omega_y}
		\dfrac{|u_k(x,y)-u_k(z,y)|^{p_{k}}}{|x-z|^{p_ks+n}} dzdxdy
		\right)^\frac{1}{p_k}\\
		&\le C^{\frac{1}{q}-\frac{q}{p_k}}
		\max\left\{1,\diam(\Omega)^{\frac{n}{p_k}}
		\right\}[u_k]_{\mathcal{W}^{s,p_k}(\Omega)},
	\end{align*} 
	and similarly
	\begin{align*}
		\left(\int_{\Omega}\int_{\Omega_x}
		\dfrac{|u_k(x,y)-u_k(x,w)|^q}{|y-w|^{qs}} dwdxdy\right)^q \\ 
		\le C^{\frac{1}{q}-\frac{q}{p_k}}&
		\max\left\{1,\diam(\Omega)^{\frac{m}{p_k}}
		\right\}[u_k]_{\mathcal{W}^{s,p_k}(\Omega)}.
	\end{align*} 
	Here $C$ is a constant independent of $k$. Then passing to the limit as 
	$k\to\infty$ and using Fatou's lemma 
	we have that
	\begin{align*}
		\left(\int_{\Omega}\int_{\Omega_y}
		\dfrac{|u(x,y)-u(z,y)|^q}{|x-z|^{qs}} dzdxdy\right)^q
		&\le C^{\frac{1}{q}}\liminf_{k\to\infty}
		[u_k]_{\mathcal{W}^{s,p_k}(\Omega)}\\
		&\le  C^{\frac{1}{q}}\liminf_{p\to\infty}
		(\lambda_1(s,p))^{\nicefrac1p},\\
		\left(\int_{\Omega}\int_{\Omega_x}
		\dfrac{|u(x,y)-u(x,w)|^q}{|y-w|^{qs}} dwdxdy\right)^q
		&\le C^{\frac{1}{q}}\liminf_{k\to\infty}
		[u_k]_{\mathcal{W}^{s,p_k}(\Omega)}\\
		&\le  C^{\frac{1}{q}}\liminf_{p\to\infty}
		(\lambda_1(s,p))^{\nicefrac1p}
	\end{align*} 
	for all $q>1.$ Now passing to the limit as 
	$q\to\infty$ we obtain
	\begin{align*}
		\sup\left\{\dfrac{|u(x,y)-u(z,y)|}{|x-z|^{s}}\colon
		(x,y)\neq(z,y)\in\Omega\right\}&\le	\liminf_{p\to\infty}
		(\lambda_1(s,p))^{\nicefrac1p},\\
		\sup\left\{\dfrac{|u(x,y)-u(x,w)|}{|x-z|^{s}}\colon
		(x,y)\neq(x,w)\in\Omega\right\}&\le	\liminf_{p\to\infty}
		(\lambda_1(s,p))^{\nicefrac1p},
	\end{align*}
	that is
	\begin{equation}\label{eq:yaestamos}
		[u]_{\mathcal{W}^{s,\infty}(\Omega)}\le \liminf_{p\to\infty}
		(\lambda_1(s,p))^{\nicefrac1p}.
	\end{equation}
	
	To conclude we need to show that $\|u\|_{L^{\infty}(\Omega)}=1.$
	For all $q>1$ there exists $k_0\in\mathbb{N}$ such that
	$p_k>q$ if $k>k_0$ and therefore, by H\"older's inequality, 
	for any $k>k_0$ we get 
	\[
		\|u_{k}\|_{L^q(\Omega)}\le 
		|\Omega|^{\frac{1}{q}-\frac{1}{p_k}}\|u_{p_j}\|_{L^p(\Omega)}^q
		=|\Omega|^{\frac{1}{q}-\frac{1}{p_j}}.
	\]
	Then passing to the limit as 
	$k\to\infty$ and using that $u_k\to u$ uniformly in $\overline{\Omega},$
	$\|u\|_{L^q(\Omega)}\le1$ for all $q>1.$ Hence 
	$\|u\|_{L^{\infty}(\Omega)}\le 1.$
	On the other hand, for all $k$ we have 
	$1=\|u_k\|_{L^{p_k}(\Omega)}\le|\Omega|^{\nicefrac{1}{p_k}}
	\|u_k\|_{L^{\infty}(\Omega)}.$ Then, since $u_k\to u$ uniformly in 
	$\overline{\Omega},$ we get $1\le \|u\|_{L^{\infty}(\Omega)}.$
	Hence $\|u\|_{L^{\infty}(\Omega)}=1.$ 
	Thus, by \eqref{eq:yaestamos}, we get
	\[
		\Lambda_{\infty} (s)\le [u]_{\mathcal{W}^{s,\infty}(\Omega)}
		\le \liminf_{p\to\infty}
		(\lambda_1(s,p))^{\nicefrac1p},
	\]
	and by \eqref{eq:limt1} we conclude that
	\[
		\Lambda_{\infty} (s)=\lim_{p\to\infty}
		(\lambda_1(s,p))^{\nicefrac1p}.
	\]
	This ends the proof. 
	\end{proof}

	Using the geometric characterization given in Lemma \ref{lema.Lambda.infty} 
	we can compute $\Lambda_\infty(s)$ in some concrete examples.

	\begin{ejem} 
		When $\Omega = B_R$ is a ball of radius $R$ we have
		$$
		\Lambda_\infty (s) =  \frac{1}{R^s}.
		$$
	\end{ejem}

	\begin{ejem} 
		When $\Omega = (-R,R) \times (-L,L)$ is a rectangle in ${\mathbb{R}}^2$ 
		we have
		$$
			\Lambda_\infty (s) =  \frac{1}{\min \{R^s, L^s \} }.
		$$
	\end{ejem}

	\begin{remark}
		{\rm One can consider two different powers $r$ and $s$ in the definition of the pseudo $p-$Laplacian. In this case we get 
		that,
		$$
		\Lambda_\infty (r,s) = \max_{(x,y) \in \Omega} 
			\min_{(z,w) \in \partial \Omega}  (|x-z|^{r}+|y-w|^{s}).
		$$
		}
	\end{remark}

{\bf Viscosity solutions.}	
	To obtain an eigenvalue problem that is satisfied by the limit of the eigenfunctions $u_p$ when $p\to \infty,$ we need
	to introduce the definition of viscosity solutions. This is a notion of solution  different from the weak one considered before. We refer to \cite{CIL} for an introduction to the subject of viscosity solutions. In the theory of viscosity solutions the equation is evaluated for test functions at points where they touch the graph of a solution. Viscosity solutions are assumed to be continuous and the fractional Sobolev space is absent from the definition (no derivatives of a solutions are needed).
	
\begin{defi} \label{def.solucion.viscosa} (Viscosity solutions). Suppose that the function $u$ is continuous in ${\mathbb{R}}^{n+m}$ and that $u = 0$ in $\Omega^c$. We say that $u$ is a viscosity supersolution  of the equation
$
- \mathcal{L}_{s,p}u + \lambda |u|^{p-2}u =0
$
if the following holds: whenever $x_0 \in \Omega$ and $\varphi \in C_0^1({\mathbb{R}}^{n+m})$ (the test function) are such that
$\varphi(x_0) = u(x_0)$ and $\varphi (x) \leq u(x)$ for every $x \in {\mathbb{R}}^{n+m}$, then we have
$$
- \mathcal{L}_{s,p}\varphi (x_0) + \lambda |\varphi (x_0)|^{p-2}\varphi(x_0) \leq 0.
$$
The requirement for being a viscosity subsolution is symmetric: the test function is touching from above and the inequality is reversed. 

Finally, a viscosity solution is defined as being both a viscosity supersolution and a viscosity subsolution.
\end{defi}

For our eigenvalue problem, we have that a continuos weak solution is a viscosity solution. For the proof we refer to \cite{LL}.

	\begin{teo}
	An eigenfunction 
	$u \in C(\overline{\Omega})$ 
	(in the weak sense) is a
viscosity solution of the equation
$
- \mathcal{L}_{s,p}u + \lambda |u|^{p-2}u =0
$
in the sense of Definition \ref{def.solucion.viscosa}.
	\end{teo}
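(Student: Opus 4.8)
The plan is to follow the scheme used in \cite{JL} (see also \cite{LL}), adapting it to the two groups of variables, and to present only the structure, since the details are routine once the scalar case is available. First I would reduce to the supersolution statement. Observe that if $u$ is a weak eigenfunction for $\lambda$ then so is $-u$, because both $\mathcal{L}_{s,p}$ and $t\mapsto|t|^{p-2}t$ are odd; moreover a $C_0^1$ test function $\psi$ touching $u$ from above at $x_0$ corresponds to $-\psi$ touching $-u$ from below, and the subsolution inequality at $x_0$ for $u$ with $\psi$ is exactly the supersolution inequality at $x_0$ for $-u$ with $-\psi$. Hence it suffices to prove: if $x_0=(a,b)\in\Omega$ and $\varphi\in C_0^1(\mathbb{R}^{n+m})$ satisfy $\varphi(x_0)=u(x_0)$ and $\varphi\le u$ on $\mathbb{R}^{n+m}$, then $-\mathcal{L}_{s,p}\varphi(x_0)+\lambda|\varphi(x_0)|^{p-2}\varphi(x_0)\le 0$.

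The proof of this then rests on two ingredients. The first is that $\mathcal{L}_{s,p}\varphi(x_0)$ is well defined: it is the sum of the fractional $p$-Laplacian of order $s$ of the single-variable function $z\mapsto\varphi(z,b)$ evaluated at $a$, plus that of $w\mapsto\varphi(a,w)$ evaluated at $b$, so each term is a scalar fractional $p$-Laplacian of a $C^1$ compactly supported function; near the singularity this integral converges absolutely when $sp<p-1$, and otherwise one uses the odd cancellation of the first-order term together with a second-order bound, exactly as in \cite{JL}, where the delicate range $1<p<2$ near critical points of $\varphi$ is also handled. The second ingredient is the passage from the weak formulation to the pointwise inequality. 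I would argue by contradiction: assuming $\mathcal{L}_{s,p}\varphi(x_0)<\lambda|\varphi(x_0)|^{p-2}\varphi(x_0)$, after perturbing $\varphi$ by a small higher-order term one may assume the touching is strict near $x_0$ inside a set $Q=B^n(a,2\rho)\times B^m(b,2\rho)\subset\subset\Omega$; replacing $\varphi$ outside $Q_\rho=B^n(a,\rho)\times B^m(b,\rho)$ by $u$ itself produces $\widehat\varphi$ with $\widehat\varphi=\varphi$ on $Q_\rho$, $\widehat\varphi=u$ on $Q_\rho^{\,c}$ and $\varphi\le\widehat\varphi\le u$. Since raising the values of a function away from the evaluation point cannot increase $\mathcal{L}_{s,p}$ there, we get $\mathcal{L}_{s,p}\widehat\varphi(z)\le\mathcal{L}_{s,p}\varphi(z)$ for $z\in Q_\rho$, and shrinking $\rho$ (using continuity of $z\mapsto\mathcal{L}_{s,p}\varphi(z)$ near $x_0$, again a scalar estimate from \cite{JL}) gives $\mathcal{L}_{s,p}\widehat\varphi<\lambda|\widehat\varphi|^{p-2}\widehat\varphi$ pointwise in $Q_\rho$. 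Thus $\widehat\varphi$ is a strict supersolution in $Q_\rho$ coinciding with $u$ on $Q_\rho^{\,c}$ while $u$ is a weak subsolution there, and a comparison argument on the small set $Q_\rho$ forces $u\le\widehat\varphi$ in $Q_\rho$, which contradicts $u(x_0)=\widehat\varphi(x_0)$ at the interior point $x_0$ together with the strictness.

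I expect the last step to be the main obstacle. Because the operator is nonlocal, one cannot just modify the test function near $x_0$: it must be repaired on all of $\mathbb{R}^{n+m}$, which is precisely the role of $\widehat\varphi$, of the monotonicity of $\mathcal{L}_{s,p}$ in the exterior datum, and of the boundedness of the eigenfunction from Lemma \ref{lema:cotainfty} (needed to control the tail integrals); and the comparison on $Q_\rho$ must absorb the zeroth-order term $\lambda|u|^{p-2}u$, which is not monotone in the direction favorable to comparison, the strict inequality satisfied by $\widehat\varphi$ providing the slack. All of this is carried out in \cite{JL} for a single fractional $p$-Laplacian. The only new feature here is that $\mathcal{L}_{s,p}$ is a sum of two such operators acting on the separate groups of variables $x\in\mathbb{R}^n$ and $y\in\mathbb{R}^m$, so every singular integral and every test-function computation decouples into two pieces, each of which is exactly the scalar situation; the argument of \cite{JL} therefore applies verbatim, which is why we omit the remaining details and refer to \cite{LL} for a complete treatment.
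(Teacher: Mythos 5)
The paper does not actually prove this statement: it cites \cite{LL}, and remarks in the introduction that the argument ``runs exactly as in \cite{JL}''. Your proposal reconstructs the overall scheme of that cited proof---reduction by oddness to the supersolution case, well-definedness of $\mathcal{L}_{s,p}\varphi(x_0)$ for smooth test functions, and a contradiction via a repaired competitor that exploits the monotonicity of the nonlocal operator in its far-field data---and you correctly identify the one new feature, that $\mathcal{L}_{s,p}$ splits additively into two scalar fractional $p$-Laplacians so every estimate decouples.

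Two points would have to be fixed before this could stand as a proof. First, when $sp\ge p-1$ the ``odd cancellation plus a second-order bound'' you invoke really needs a second-order Taylor expansion of the test function; $C^1_0$ regularity, as written in Definition~\ref{def.solucion.viscosa}, does not supply it, so in that regime one must take $C^2$ test functions as in \cite{LL} (for the paper's only application, Section~\ref{limitep} with $p\to\infty$, eventually $sp<p-1$ and this is moot). Second, and more substantively, the final comparison step as you state it has the inequality pointing the wrong way. With $\widehat\varphi=\varphi$ on $Q_\rho$ and $\widehat\varphi=u$ on $Q_\rho^c$, you have by construction $\widehat\varphi\le u$, and for $z\in Q_\rho$ the pointwise estimate $\mathcal{L}_{s,p}\widehat\varphi(z)<\lambda|u(z)|^{p-2}u(z)$. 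Since $\mathcal{L}_{s,p}$ plays the role of $-\Delta_p$, this together with $\widehat\varphi=u$ on $Q_\rho^c$ is exactly the configuration in which comparison yields $\widehat\varphi\le u$ in $Q_\rho$---which you already knew; it does not yield $u\le\widehat\varphi$, so there is no contradiction. What actually closes the argument in \cite{LL}/\cite{JL} is to perturb to a competitor that lies \emph{strictly above} $u$ near $x_0$ (say $\Phi=\varphi+(\sigma-\eta|\cdot-x_0|^2)\chi$ with a cutoff $\chi$ and $\sigma,\eta$ small, so that $\{\Phi>u\}$ is a nonempty compact subset of $Q_\rho$ and $\mathcal{L}_{s,p}\Phi<\lambda|u|^{p-2}u$ still holds there), and then to test the weak formulation against the nontrivial, nonnegative, compactly supported function $(\Phi-u)_+$: the Picone-type accretivity of $w\mapsto\mathcal{H}_{s,p}(w,\cdot)$ gives $\mathcal{H}_{s,p}(\Phi,(\Phi-u)_+)\ge\mathcal{H}_{s,p}(u,(\Phi-u)_+)$, while the strict pointwise supersolution property of $\Phi$ and the weak equation for $u$ give the reverse strict inequality. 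Relatedly, your remark about the zeroth-order term being ``not monotone in the favorable direction'' is a red herring: in this step the right-hand side $\lambda|u|^{p-2}u$ is a fixed source term (frozen at $u$), not a zeroth-order term in the unknown, so its sign plays no role.
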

	
	We will also use the following lemmas.
	
	\begin{lem} \label{lema5.6} Assume that
		\begin{align*}
			&\left(A_p\right)^{\nicefrac1p}\to 
			A,
			&\left(B_p\right)^{\nicefrac1p}\to -B,\\
			&\left(C_p\right)^{\nicefrac1p}\to C,
			&\left(D_p \right)^{\nicefrac1p}\to -D,
		\end{align*}
		and that 
		$$
		\theta_p \to \Theta,
		$$
		as $p\to \infty$.
		If
		$$
		2^{1/p} (A_p + C_p)^{1/p} \geq (B_p +D_p + \theta_p^{p-1})^{1/p}
		$$
		for every $p$ large enough, then, passing to the limit, it holds that
		$$
		\max \{ A; C\} \geq \max \{ -B; -D; \Theta \}.
		$$ 
	\end{lem}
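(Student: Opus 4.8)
The plan is to reduce the statement to the elementary principle that, for nonnegative quantities, taking $p$-th roots of a sum and letting $p\to\infty$ produces a maximum. Concretely, I would first establish the auxiliary fact: if $a_p,b_p\ge 0$ with $a_p^{1/p}\to a$ and $b_p^{1/p}\to b$, then $(a_p+b_p)^{1/p}\to\max\{a,b\}$. This is immediate from the sandwich
\[
\max\{a_p,b_p\}\le a_p+b_p\le 2\max\{a_p,b_p\},
\]
which, after taking $p$-th roots, reads
\[
\max\{a_p^{1/p},b_p^{1/p}\}\le (a_p+b_p)^{1/p}\le 2^{1/p}\max\{a_p^{1/p},b_p^{1/p}\},
\]
so the claim follows using $2^{1/p}\to 1$. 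An obvious induction extends this to any fixed finite number of summands: $\left(\sum_{i=1}^k a_{i,p}\right)^{1/p}\to\max_i a_i$.

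Next I would treat the term $\theta_p^{p-1}$ separately, since its exponent grows with $p$. Writing $(\theta_p^{p-1})^{1/p}=\theta_p^{(p-1)/p}$ and using $\theta_p\to\Theta$ together with $(p-1)/p\to 1$, one sees that $\theta_p^{(p-1)/p}\to\Theta$: if $\Theta>0$ this follows by taking logarithms, and if $\Theta=0$ it follows because for $p$ large one has $\theta_p<\varepsilon<1$, hence $\theta_p^{(p-1)/p}\le\varepsilon^{1/2}$ with $\varepsilon>0$ arbitrary.

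With these two preliminaries in place, applying the auxiliary fact with two summands to the left-hand side gives $2^{1/p}(A_p+C_p)^{1/p}\to\max\{A,C\}$ (the prefactor $2^{1/p}$ being harmless), and applying it with three summands to the right-hand side, together with the previous paragraph, gives $(B_p+D_p+\theta_p^{p-1})^{1/p}\to\max\{-B,-D,\Theta\}$. Since by hypothesis
\[
2^{1/p}(A_p+C_p)^{1/p}\ge (B_p+D_p+\theta_p^{p-1})^{1/p}
\]
for all $p$ large enough and both sides converge, passing to the limit yields $\max\{A,C\}\ge\max\{-B,-D,\Theta\}$, as claimed.

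I do not expect a genuine obstacle here: the argument is soft and purely concerns the asymptotics of $p$-th roots. The only point meriting a line of care is the convergence $\theta_p^{(p-1)/p}\to\Theta$ in the degenerate case $\Theta=0$, together with the standing (implicit) assumption that $A_p,B_p,C_p,D_p$ and $\theta_p$ are nonnegative, which is precisely what makes the $p$-th roots and the displayed inequality meaningful.
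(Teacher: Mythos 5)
Your proof is correct, and it is organized a bit differently from the paper's. The paper argues by cases according to which of $A,C$ and which of $-B,-D,\Theta$ is largest: it factors out the dominant term on each side, e.g.\ writing $2^{1/p}(A_p+C_p)^{1/p}=A_p^{1/p}\,2^{1/p}(1+C_p/A_p)^{1/p}$ and $(B_p+D_p+\theta_p^{p-1})^{1/p}=B_p^{1/p}(1+D_p/B_p+\theta_p^{p-1}/B_p)^{1/p}$, and then lets $p\to\infty$ to obtain $A\ge -B$ in the representative case $A>C$, $-B>\max\{-D,\Theta\}$, remarking that the remaining cases are analogous. You instead isolate a reusable auxiliary fact — $(a_p+b_p)^{1/p}\to\max\{a,b\}$ when $a_p^{1/p}\to a$, $b_p^{1/p}\to b$ (via the sandwich $\max\{a_p,b_p\}\le a_p+b_p\le 2\max\{a_p,b_p\}$) — extend it by induction to any fixed number of summands, and handle $\theta_p^{p-1}$ on its own. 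This computes the limit of each side of the inequality once and for all and so avoids the case analysis entirely; it also makes explicit (correctly) the treatment of the degenerate case $\Theta=0$ and the standing nonnegativity assumption that the paper leaves implicit. Both arguments rest on the same asymptotics (the sum is comparable to its largest term up to a factor absorbed by the $p$-th root), so the mathematical content is the same; your version is a cleaner packaging of it.
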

	
	\begin{proof} First, assume that $A > C $ and $-B > \max \{-D; \Theta \}$. Then for $p$ large enough
	we have 
	$A_p \geq  C_p $, $-B_p \geq -D_p$ and $-B_p \geq (\theta_p)^{p}$. Then 
	taking $p\to \infty$ in 
	$$
		(A_p)^{\nicefrac1p} 
		2^{\nicefrac1p} \left(1 + \frac{C_p}{A_p}\right)^{\nicefrac1p} 
		\geq (B_p)^{\nicefrac1p}
		\left(1 +\frac{D_p}{B_p} + \frac{\theta_p^{p-1}}{B_p}\right)^{\nicefrac1p}
		$$ 
		we get
		$$
		A \geq -B.
		$$
	The rest of the cases ($A=C$, $A<C$, etc) can be handled in an analogous way.
	\end{proof}
	
	\begin{lem} For a smooth test function $\phi$ let
	$$
		A_p=\int_{\mathbb{R}^n}
			\dfrac{|\phi(x_p,y_p)-\phi(z,y_p)|^{p-2}(\phi(x_p,y_p)
			-\phi(z,y_p))^+}{|x_p-z|^{n+sp}}dz.
			$$
			If $x_p\to x_0$, $y_p \to y_0$ as 
			$p\to \infty$, then
			$$
			(A_p)^{\nicefrac1p } \to A = \sup_z \frac{\phi(x_0,y_0)
			-\phi(z,y_0)}{|x_0-z|^{s}}.
			$$
	\end{lem}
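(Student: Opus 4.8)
The plan is to first put $A_p$ into scale-invariant form. Since $|a|^{p-2}(a)^{+}=((a)_{+})^{p-1}$ and $n+sp-s(p-1)=n+s$, setting
\[
  F_p(z):=\frac{(\phi(x_p,y_p)-\phi(z,y_p))_{+}}{|x_p-z|^{s}}
  \qquad (z\neq x_p),
\]
one has $A_p=\int_{\mathbb{R}^n}F_p(z)^{p-1}\,|x_p-z|^{-(n+s)}\,dz$, while the claimed limit is $A=\sup_z F(z)$ with $F(z)=(\phi(x_0,y_0)-\phi(z,y_0))_{+}/|x_0-z|^{s}$ (the supremum in the statement is $\geq 0$, as one sees by letting $z\to x_0$, so inserting the positive part changes nothing). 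Since $\phi\in C_0^1(\mathbb{R}^{n+m})$, I would record at the outset the Lipschitz estimate $F_p(z)\leq L\,|x_p-z|^{1-s}$, where $L$ is a Lipschitz constant of $\phi$ (this already shows $A_p<\infty$ for $p>1/(1-s)$), together with the uniform bound $F_p(z)\leq H:=\max\{L,2\|\phi\|_{L^{\infty}}\}$, the latter because $\phi$ has compact support.

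The crucial auxiliary fact is that $M_p:=\sup_z F_p(z)\to A$. For $\liminf_p M_p\geq A$ one uses that for each fixed $z\neq x_0$ one has $F_p(z)\to F(z)$ (continuity of $\phi$ and $(x_p,y_p)\to(x_0,y_0)$), so $\liminf_p M_p\geq F(z)$; taking the supremum over $z$ gives the bound. For $\limsup_p M_p\leq A$ I would argue by contradiction: if $M_{p_k}\to M^{*}>A$, choose $z_k$ with $F_{p_k}(z_k)\to M^{*}$ and pass to a subsequence along which $|x_{p_k}-z_k|$ converges in $[0,+\infty]$. If the limit is $0$, the Lipschitz estimate forces $F_{p_k}(z_k)\to 0$; if it is $+\infty$, then $z_k\to\infty$, so $\phi(z_k,y_{p_k})=0$ eventually and $F_{p_k}(z_k)\to 0$; if it is a finite positive number, $z_k$ is bounded and a convergent subsequence $z_k\to z_{*}\neq x_0$ gives $F(z_{*})=\lim F_{p_k}(z_k)\geq M^{*}>A$, contradicting $A=\sup F$. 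Each case contradicts $M^{*}>A$, so $M_p\to A$.

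For the lower bound $\liminf_p A_p^{1/p}\geq A$ (trivial when $A=0$, since $A_p\geq 0$), fix $\varepsilon\in(0,A)$ and pick $z_{*}\neq x_0$ with $F(z_{*})>A-\varepsilon$. On a small ball $B(z_{*},\delta)$ whose closure stays away from $x_0$, the functions $F_p$ are Lipschitz with a constant uniform in $p$ large (the numerator is $L$-Lipschitz and bounded, and $|x_p-z|^{-s}$ is uniformly Lipschitz there because $x_p\to x_0$); together with $F_p(z_{*})\to F(z_{*})$, and shrinking $\delta$ if necessary, this gives $F_p\geq A-2\varepsilon$ on $B(z_{*},\delta)$ for $p$ large, while $|x_p-z|\leq R_0$ on that ball for some $R_0$. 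Keeping only the integral over $B(z_{*},\delta)$ in $A_p$ yields $A_p\geq (A-2\varepsilon)^{p-1}\,|B(z_{*},\delta)|\,R_0^{-(n+s)}$, hence $\liminf_p A_p^{1/p}\geq A-2\varepsilon$; let $\varepsilon\to 0$.

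The upper bound $\limsup_p A_p^{1/p}\leq A$ is the main obstacle, because $|x_p-z|^{-(n+s)}$ is not integrable near $z=x_p$: bounding $F_p\leq M_p$ globally is impossible, and bounding by $L|x_p-z|^{1-s}$ globally would only give $\limsup_p A_p^{1/p}\leq\max\{A,L\}$, which is useless when $L>A$. The remedy is to split $\mathbb{R}^n$ at a shrinking radius $\rho_p=1/p$: on $\{|x_p-z|\leq 1/p\}$ apply the Lipschitz bound, obtaining a term of size $L^{p-1}\omega_n\,\rho_p^{(1-s)(p-1)-s}/((1-s)(p-1)-s)$ whose $p$-th root tends to $0$ precisely because $\rho_p\to 0$; on $\{|x_p-z|>1/p\}$ apply $F_p\leq M_p$ and $\int_{|x_p-z|>1/p}|x_p-z|^{-(n+s)}\,dz=\omega_n\,p^{s}/s$, obtaining a term whose $p$-th root equals $M_p^{(p-1)/p}(\omega_n\,p^{s}/s)^{1/p}\to A$ by the auxiliary fact. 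Since $A_p^{1/p}$ is at most $2^{1/p}$ times the larger of these two $p$-th roots, $\limsup_p A_p^{1/p}\leq A$, and together with the lower bound this gives $A_p^{1/p}\to A$. I expect the only delicate point to be the calibration of $\rho_p$: it must shrink so that the inner contribution disappears in the limit, but slowly enough ($\log(1/\rho_p)=o(p)$) that the outer integral $\int_{|x_p-z|>\rho_p}|x_p-z|^{-(n+s)}\,dz$ does not blow up; $\rho_p=1/p$ does the job, and everything else is continuity and compactness.
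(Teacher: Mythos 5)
Your proof is correct, and it supplies the rigor that the paper's one-paragraph argument only gestures at. The paper asserts that the integrand with $(x_p,y_p)$ is ``asymptotically the same'' as the one with $(x_0,y_0)$ and then invokes $(\int f^p)^{1/p}\to\|f\|_\infty$, without addressing that the kernel $|x_p-z|^{-(n+s)}$ is non-integrable at $z=x_p$ and has infinite total mass, nor that the exponent is $p-1$ rather than $p$, nor that the base point moves with $p$. You resolve all of these: factoring the integrand as $F_p(z)^{p-1}\,|x_p-z|^{-(n+s)}$, proving $\sup_z F_p\to A$ via a clean three-case compactness argument (near $x_0$, escaping to infinity, or converging to an interior $z_\ast$), using the Lipschitz bound $F_p\le L|x_p-z|^{1-s}$ to tame the singularity, and splitting at the shrinking radius $1/p$ so the inner term's $p$-th root vanishes while the outer term's tends to $A$ because the polynomial growth $p^{s}/s$ washes out under the $1/p$-th power. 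The lower bound via a uniform-in-$p$ Lipschitz estimate on $F_p$ near a near-maximizer of $F$ is also correct, as is the preliminary observation that $A\ge 0$ so the positive part in the definition of $A$ is harmless. In short, this is the same underlying idea as the paper's --- raise to $1/p$ and recover the sup norm --- but yours is a complete argument where the paper's is a sketch.
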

	
	\begin{proof}
	We just have to observe that 
	$$
	(A_p)^{\nicefrac1p}= \left(\int_{\mathbb{R}^n}
			\dfrac{|\phi(x_p,y_p)-\phi(z,y_p)|^{p-2}(\phi(x_p,y_p)
			-\phi(z,y_p))^+}{|x_p-z|^{n+sp}}dz \right)^{1/p}.
	$$
	The integrand satisfies
	$$
	\begin{array}{l}
	\dfrac{|\phi(x_p,y_p)-\phi(z,y_p)|^{p-2}(\phi(x_p,y_p)
			-\phi(z,y_p))^+}{|x_p-z|^{n+sp}} \\
			\qquad \sim 
			\dfrac{|\phi(x_0,y_0)-\phi(z,y_0)|^{p-2}(\phi(x_0,y_0)
			-\phi(z,y_0))^+}{|x_0-z|^{n+sp}}
			\end{array}
	$$
	and hence the result follows from the fact that $
	\left(\int f^p\right)^{\nicefrac1p} \to \| f \|_\infty$.
	\end{proof}

	\begin{lem} \label{lema.sol.viscosa}
		Any uniform limit of $u_{p}$ a sequence of eigenfunctions for $\lambda_{1}(s,p)$ 
		normalized according to $\|u_{p}\|_{L^p (\Omega)}=1$, $u$ is a 	
		nontrivial solution to 	
		\[
			\begin{cases}
				\max\{A; C\}= \max\{-B;-D; \Lambda_\infty (s) u\}
 				& \mbox{ in } \Omega, \\
 			 	u=0 & \mbox{ in }\Omega^c,\\
			\end{cases}
		\]
		in the viscosity sense. Here
		\begin{align*}
			&A= \sup_w \frac{u(x,w)-u(x,y)}{|y-w|^{s}},
			&B= \inf_w \frac{u(x,w)- u(x,y)}{|y-w|^{s}},\\
			&C= \sup_z \frac{u (z,y)- u(x,y)}{|x-z|^{s}},
			&D= \inf_z \frac{u(z,y)- u(x,y)}{|x-z|^{s}}.
		\end{align*}
	\end{lem}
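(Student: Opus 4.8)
The plan is to obtain the limit problem by passing to the limit $p\to\infty$ in the viscosity formulation of the eigenvalue problems solved by the $u_p$, following closely the scheme of \cite{JL} and \cite{FP}. Recall from Lemma~\ref{lema.conv.unif.autov} that, along the relevant subsequence $p_j\to\infty$, $u_{p_j}\to u$ uniformly on $\mathbb{R}^{n+m}$, with $u$ continuous on $\mathbb{R}^{n+m}$ and in $\mathcal{W}^{s,\infty}(\Omega)$, $u=0$ in $\Omega^c$, $\|u\|_{L^\infty(\Omega)}=1$ (so $u$ is nontrivial), and $(\lambda_1(s,p_j))^{1/p_j}\to\Lambda_\infty(s)$; moreover every $u_{p_j}$ is strictly positive in $\Omega$ and is a viscosity solution of $-\mathcal{L}_{s,p_j}u_{p_j}+\lambda_1(s,p_j)|u_{p_j}|^{p_j-2}u_{p_j}=0$ by the result recalled just after Definition~\ref{def.solucion.viscosa}. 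It remains to verify the limiting equation in the sense of Definition~\ref{def.solucion.viscosa}; we write $p$ for $p_j$.

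To prove the supersolution property, fix $\varphi\in C_0^1(\mathbb{R}^{n+m})$ touching $u$ from below at an interior point $(x_0,y_0)\in\Omega$, which by the usual argument we may take to be a strict global minimum of $u-\varphi$. By uniform convergence the shifted functions $\varphi_p=\varphi+\min_{\mathbb{R}^{n+m}}(u_p-\varphi)$ touch $u_p$ from below at points $(x_p,y_p)\to(x_0,y_0)$, which lie in $\Omega$ for $p$ large, with $\varphi_p(x_p,y_p)=u_p(x_p,y_p)>0$. Since $\mathcal{L}_{s,p}$ depends only on the increments of its argument, $\mathcal{L}_{s,p}\varphi_p=\mathcal{L}_{s,p}\varphi$, so the viscosity inequality for $u_p$ at $(x_p,y_p)$ gives
\[
\mathcal{L}_{s,p}\varphi(x_p,y_p)\ \ge\ \lambda_1(s,p)\,u_p(x_p,y_p)^{p-1}.
\]
Using the identity $(a)^{p-1}=(a^+)^{p-1}-(a^-)^{p-1}$, split each of the two integrals defining $\mathcal{L}_{s,p}\varphi(x_p,y_p)$ into its positive and its negative part, getting four nonnegative quantities. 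The analytic core of the argument, carried out as in \cite{JL,FP}, is that in the $p$-th-root limit the contribution coming from a small ball around $(x_p,y_p)$ is negligible --- this is where $s<1$ is used, a $C^1$ increment there contributing $O(\delta^{1-s})$ --- while on the complement $u_p$ may be used in place of $\varphi$; consequently the $p$-th roots of the four quantities converge, via the lemma giving $(A_p)^{1/p}\to\sup(\cdot)$ and its obvious variants, to the one-sided quantities $A,C$ and $-B,-D$ formed from $u$ at $(x_0,y_0)$. Writing $\theta_p=(\lambda_1(s,p))^{1/(p-1)}u_p(x_p,y_p)$, so that $\theta_p\to\Lambda_\infty(s)u(x_0,y_0)$, and invoking Lemma~\ref{lema5.6} to organize the passage to the limit, we obtain the supersolution inequality relating $\max\{A;C\}$ and $\max\{-B;-D;\Lambda_\infty(s)u\}$ at $(x_0,y_0)$.

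The subsolution property is obtained by the mirror-image argument: now $\varphi$ touches $u$ from above, the viscosity inequality and the role of Lemma~\ref{lema5.6} are reversed, the contact points again converge into $\Omega$, and one gets the opposite inequality. Together these give $\max\{A;C\}=\max\{-B;-D;\Lambda_\infty(s)u\}$ in $\Omega$ in the viscosity sense, while the exterior condition $u=0$ in $\Omega^c$ is inherited directly from the $u_p$; nontriviality is already contained in Lemma~\ref{lema.conv.unif.autov}.

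The principal difficulty is exactly this passage to the limit inside the genuinely nonlocal operator: the viscosity inequality is available only for the smooth test function $\varphi$, which coincides with $u$ merely at the contact point, whereas the limiting equation is written in terms of $u$. Reconciling the two requires isolating the singular, near-contact part of $\mathcal{L}_{s,p}\varphi$ --- which disappears after taking $p$-th roots precisely because $s<1$ --- from the tail, where the uniform convergence $u_p\to u$ does the job. The facts that $u\in\mathcal{W}^{s,\infty}(\Omega)$, so $A,B,C,D$ are finite, and that the contact points remain in the interior of $\Omega$, so $u_p(x_p,y_p)>0$ and the zeroth-order term has an unambiguous sign when $p$-th roots are taken, are what allow the scheme to close; all of these steps are adaptations of the corresponding ones in \cite{JL} and \cite{FP}, so only the main points are indicated.
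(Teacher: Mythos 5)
Your proposal is correct and follows essentially the same route as the paper: touch $u$ with a $C^1$ test function, transfer the contact to points $(x_p,y_p)\to(x_0,y_0)$ by uniform convergence, use the viscosity formulation of the $p$-eigenvalue equation there, split the two nonlocal integrals into their positive and negative parts $A_p,B_p,C_p,D_p$, and pass to the limit of $p$-th roots via the auxiliary convergence lemma together with Lemma~\ref{lema5.6} and $\theta_p=(\lambda_1(s,p))^{1/(p-1)}u_p(x_p,y_p)\to\Lambda_\infty(s)u(x_0,y_0)$, then argue symmetrically for the subsolution inequality. Your extra discussion of the near/far splitting (the $O(\delta^{1-s})$ contribution near the contact point) only elaborates the convergence step that the paper delegates to its preceding lemma and to \cite{JL,FP}, so no genuinely different ideas are involved.
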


	\begin{proof} 
		We call $u_p$ a sequence of solutions to 
		$
		- \mathcal{L}_{s,p}u + \lambda |u|^{p-2}u =0
		$
		that converges uniformly to $u$.
		That $u=0$ in $\Omega^c$ follows since 
		$u_p =0$ in $\Omega^c$ and we have uniform 
		convergence. 

		Let $\phi\in C^1_0(\mathbb{R}^{n+m})$ be such that 
		$u-\phi$ has a strict minimum at $(x_0,y_0)\in \Omega$.
		Since $u_p$ converges uniformly to $u$ we have that there exist 
		$(x_p,y_p)\in \Omega$ such that $u_p-\phi$ has a minimum at $(x_p,y_p)$ 
		and $(x_p,y_p) \to (x_0,y_0)$ as $p\to \infty$. Since $u_p$ is a 
		viscosity solution to
		$-\mathcal{L}_{s,p}v(x,y)+\lambda_1(s,p)v(x,y)^{p-1}=0$ in 
		$\Omega,$ we obtain
		\begin{equation}\label{eq:ultimaeq}
			\begin{aligned}
			((\lambda_{1}(s,&p))^{\nicefrac{1}{(p-1)}} 
			u_p (x_p,y_p) )^{p-1} \leq \\
			\le&2 \int_{\mathbb{R}^n}
			\dfrac{|\phi(x_p,y_p)-\phi(z,y_p)|^{p-2}(\phi(x_p,y_p)
			-\phi(z,y_p)
			)}{|x_p-z|^{n+sp}}dz\\
			& + 2\int_{\mathbb{R}^m}
			\dfrac{|\phi(x_p,y_p)-\phi(x_p,w)|^{p-2}(\phi(x_p,y_p)-\phi(x_p,w)
			)}{|y_p-w|^{m+sp}}dw\\
			=&2(A_p-B_p+C_p-D_p),
		\end{aligned}
		\end{equation}
		where
		\begin{align*}
			&A_p=\int_{\mathbb{R}^n}
			\dfrac{|\phi(x_p,y_p)-\phi(z,y_p)|^{p-2}(\phi(x_p,y_p)
			-\phi(z,y_p))^+}{|x_p-z|^{n+sp}}dz, \\
			&B_p=\int_{\mathbb{R}^n}
			\dfrac{|\phi(x_p,y_p)-\phi(z,y_p)|^{p-2}(\phi(x_p,y_p)
			-\phi(z,y_p))^-}{|x_p-z|^{n+sp}}dz,\\
			&C_p=\int_{ \mathbb{R}^m}
			\dfrac{|\phi(x_p,y_p)-\phi(x_p,w)|^{p-2}(\phi(x_p,y_p)-
			\phi(x_p,w)
			)^+}{|y_p-w|^{m+sp}}dw ,\\
			&D_p=\int_{ \mathbb{R}^m}
			\dfrac{|\phi(x_p,y_p)-
			\phi(x_p,w)|^{p-2}(\phi(x_p,y_p)-\phi(x_p,w)
			)^-}{|y_p-w|^{m+sp}}dw .
		\end{align*}
		We observe that
		\begin{align*}
			&\left(A_p\right)^{\nicefrac1p}\to 
			A,
			&\left(B_p\right)^{\nicefrac1p}\to -B,\\
			&\left(C_p\right)^{\nicefrac1p}\to C,
			&\left(D_p \right)^{\nicefrac1p}\to -D,
		\end{align*}
		and
		$$
			(\lambda_{1}(s,p))^{\nicefrac1{(p-1)}} u_p (x_p,y_p)  
			\to \Lambda_\infty u (x_0,y_0).
		$$
		Hence, taking limit as $p\to \infty$ in \eqref{eq:ultimaeq}, from Lemma \ref{lema5.6}, we get
		$$
			 \max\{-B;- D; \Lambda_\infty (s) u (x_0,y_0)\}\le\max\{A; C\}.
		$$

		Now, if $\psi$ is such that $u-\psi$ has a strict minimum at $(x_0,y_0)
		\in \Omega$.
		Since $u_p$ converges uniformly to $u$ we have that there exist 
		$(x_p,y_p)\in \Omega$ such that $u_p-\psi$ has a minimum at $(x_p,y_p)$ 
		and $(x_p,y_p) \to (x_0,y_0)$ as $p\to \infty$. Since $u_p$ is a 
		solution to $-\mathcal{L}_{s,p}v(x,y)+\lambda v(x,y)^{p-1}=0$ 
		in $\Omega$ we obtain
		\begin{align*}
			((\lambda_{1,p}&)^{\nicefrac1{(p-1)}} u_p (x_p,y_p) )^{p-1} \geq \\
			&\ge 2\int_{\mathbb{R}^n}
			\dfrac{|\psi(x_p,y_p)-\psi(z,y_p)|^{p-2}(\psi(x_p,y_p)-\psi(z,y_p)
			)}{|x_p-z|^{n+sp}}dz\\
			&\quad +2\int_{\mathbb{R}^m}
			\dfrac{|\psi(x_p,y_p)-\psi(x_p,w)|^{p-2}(\psi(x_p,y_p)-\psi(x_p,w)
			)}{|y_p-w|^{m+sp}}dw,
		\end{align*}
		and, arguing as before, we obtain
		$$
			\max\{A; C\}\geq \max\{-B;-D; \Lambda_\infty (s) u (x_0,y_0) \}.
		$$
		\end{proof}


\end{document}